\newcommand{\St}{\mathrm{St}}
\newcommand{\ST}[2]{\St_{#1}({#2})}
\newcommand{\Gr}[2]{\mathrm{Gr}_{#1}({#2})}
\newcommand{\CH}{\mathrm{CH}}
\newcommand{\CHW}{\widetilde{\CH}{}}
\newcommand{\spec}{\mathrm{Spec}\,}
\newcommand{\BSp}{\operatorname{B}\!\Sp}
\newcommand{\STsp}[2]{\St^{\Sp}_{#1}({#2})}
\newcommand{\Grsp}[2]{\mathrm{Gr}^{\Sp}_{#1}({#2})}
\theoremstyle{plain}
\newtheorem{problem}{Problem}
\title[Efficient generation of projective modules: a motivic view]{Efficient generation of projective modules: \\a motivic view}
\author{Aravind Asok, Morgan Opie, Brian Shin, and Tariq Syed}
\date{\today}
\begin{document}

\begin{abstract}
	Assume $k$ is a field and $R$ is a smooth $k$-algebra of dimension $d$.  If $P$ is a projective module of rank $r$, then it is well-known that $P$ can be generated by $r+d$-elements (Forster--Swan).  Under suitable assumptions on $r$ and $d$, we investigate obstructions to generation of $P$ by fewer than $r+d$ elements using motivic homotopy theory.  For example, we observe that a quadratic enhancement of the classical Segre class obstructs generation by $r+d-1$ elements, whether or not $k$ is algebraically closed, generalizing old results of M.P. Murthy.  Along the way, we also establish efficient generation results for symplectic modules.  
\end{abstract}

\maketitle
\section{Introduction}
Assume $R$ is a commutative ring, and $M$ is a finitely generated $R$-module.  Write $\nu(M)$ for the minimal number of generators of $M$.  In 1961, R. Swan posed the following problem \cite[p. 272 Problem]{SwanProjModVB}, which he attributed to J.-P. Serre.  

\begin{problem}[Serre--Swan]\label{Serre-swan}
	If $R$ is a commutative Noetherian ring whose maximal ideal spectrum is a Noetherian topological space of dimension $d$, and $M$ is a rank $r$ projective module, then is $\nu(M) \leq r + d$?
\end{problem}

There are many variants of this kind of problem: one could drop the hypothesis that $M$ is projective, retaining only the condition that it is a finitely generated module; one could drop the hypothesis that $R$ is Noetherian, retaining only enough structure to define dimension, etc. When $R$ is Noetherian ring of Krull dimension $d$, if the localizations $M_{{\mathfrak m}}$ can be generated by $r$ elements for all maximal ideals ${\mathfrak m}$ of $R$, then O. Forster showed that $\nu(M) \leq r + d$ \cite[Satz 1]{Fo}.  Shortly thereafter, R. Swan established the same result eliminating the Noetherian hypotheses on $R$: if the maximal ideal spectrum of $R$ is a Noetherian topological space of dimension $d$, and if for each maximal ideal ${\mathfrak m}$ of $R$ the module $M_{{\mathfrak m}}$ is generated by $r$ elements, then $\nu(M) \leq r+d$ \cite[Theorem 1]{Sw}.  

The original Serre--Swan problem was explicitly based on an analogy with a corresponding topological situation, so let us rephrase the algebraic problem in geometric terms.  Indeed, the Grassmannian $\Gr{r}{n}$ represents the functor on commutative rings that, to a commutative ring $R$, assigns the set of $n$-generated projective $R$-modules of rank $r$.  If $M$ is an $n$-generated projective $R$-module, then asking whether $M$ can be generated by $n' < n$ elements amounts to asking whether the classifying map $\spec R \to \Gr{r}{n}$ associated with $M$ can be lifted along a standard map $\Gr{r}{n'} \to \Gr{r}{n}$.  If $n'$ can be taken to be smaller than $r+d$, we will say that $M$ is {\em efficiently generated}. 

One celebrated efficient generation result was established by M.P. Murthy, who observed that if $R$ is furthermore a regular $k$-algebra with $k$ an algebraically closed field, then there are geometric obstructions to efficient generation of $M$.  In the context we consider, Murthy's results imply: if $M$ is a projective $R$-module of rank $r$, then there is an associated {\em Segre class} $s_0(M) \in \CH_0(\Spec R)$ whose vanishing is necessary and sufficient for $M$ to be efficiently generated \cite[Theorem 5.2 and Corollary 5.3]{Mu}.

Our aim here is to further analyze such efficient generation results in the case of smooth affine $k$-algebras $R$ over a field $k$ (not necessarily algebraically closed).  In this context, obstruction-theoretic techniques \cite{Morel} in the Morel--Voevodsky motivic homotopy theory \cite{MV99} can be brought to bear on the problem.  Write $\mathcal H(k)$, the homotopy category of motivic spaces over $k$.

Affine representability results \cite{AHW1,AHW2} imply that: 
\begin{itemize}
	\item finitely generated projective $R$-modules of rank $r$ are classified up to isomorphism by $\mathbb A^1$-homotopy classes of maps $\spec R \to \BGL_r = */\GL_r$ 
	\item $n$-generated projective $R$-modules of rank $r$ are classified up to isomorphism by $\mathbb{A}^1$-homotopy classes of maps $\spec R \to \Gr{r}{n}$.
\end{itemize}
Moreover, there is a canonical map $\Gr{r}{n} \to \BGL_r$ classifying the universal rank $r$ vector bundle on $\Gr{r}{n}$.  

In this context, efficient generation questions amount to analyzing the following lifting problem:
\[
\xymatrix{
	& \Gr{r}{n} \ar[d] \\
\spec R \ar[r]\ar@{.>}[ur]^{\exists ?} & \BGL_r.
}
\]
Obstruction theory for the Moore--Postnikov factorization of the vertical map then yields an inductively defined sequence of obstructions whose vanishing is necessary and sufficient for the existence of the dotted arrow in the diagram.  The relevant obstructions are controlled by the homotopy fiber of the map $\Gr{r}{n} \to \BGL_r$, which is identified as the Stiefel variety $\ST{r}{n} =\GL_n / \GL_{n-r}$ (cf. \Cref{prop:fib_stiefel_pf}).  

With these preliminaries at hand, we can now state our main results.  As a warm-up, one observes a mild improvement of the Forster--Swan bound follows immediately from connectivity estimates for $\ST{r}{n}$ (the ${\mathbb A}^1$-cohomological dimension appearing in the statement is bounded above by the Krull dimension, but can be strictly smaller).

\begin{thmx}[{cf. \Cref{htpy:Fo-Sw} }]\label{thm:Z}
Let $X = \spec R$ be a smooth affine variety of ${\mathbb A}^1$-cohomological dimension at most $d \geq 2$ over a perfect field $k$. Any finitely generated projective $R$-module of rank $r$ can be generated by $r+d$ elements as an $R$-module. 
\end{thmx}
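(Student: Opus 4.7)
The plan is to reformulate the generation problem as the motivic lifting problem
\[
\xymatrix{
	& \Gr{r}{r+d} \ar[d] \\
\spec R \ar[r]\ar@{.>}[ur] & \BGL_r
}
\]
set up in the introduction (the bottom map classifies the given projective module $P$, and the dotted lift realises $P$ as generated by $r+d$ elements), and then to kill the resulting Moore--Postnikov obstructions using the $\mathbb{A}^1$-cohomological dimension hypothesis.

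The first step is to recall from \Cref{prop:fib_stiefel_pf} that the homotopy fibre of the vertical map is the Stiefel variety $\ST{r}{r+d} = \GL_{r+d}/\GL_d$, and to invoke the $\mathbb{A}^1$-connectivity estimate that $\ST{r}{r+d}$ is $\mathbb{A}^1$-$(d-1)$-connected. Since $d \geq 2$, the fibre is in particular $\mathbb{A}^1$-simply connected, which ensures that the successive layers of the Moore--Postnikov tower of $\Gr{r}{r+d} \to \BGL_r$ are principal twisted fibrations classified by strictly $\mathbb{A}^1$-invariant abelian sheaves.

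The second step is to apply motivic obstruction theory \`a la Morel. The successive obstructions to producing the dotted lift are cohomology classes in groups of the form
\[
\HH^{i+1}_{\mathrm{Nis}}\bigl(\spec R,\, \pi^{\mathbb{A}^1}_i(\ST{r}{r+d})\bigr),
\]
possibly with a twist coming from the tautological data on $\BGL_r$. The $(d-1)$-connectivity of the fibre forces $i \geq d$, so each obstruction sits in cohomological degree at least $d+1$. Since $X$ has $\mathbb{A}^1$-cohomological dimension at most $d$, all such groups vanish, every obstruction is automatically zero, the lift exists, and $P$ can be generated by $r+d$ elements.

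The main obstacle is really the connectivity estimate for $\ST{r}{r+d}$; everything else is formal. I would obtain it by induction on $r$ from Morel's computation of the first non-vanishing $\mathbb{A}^1$-homotopy sheaf of $\A^m \setminus 0$, applied to the fibre sequences $\A^{r+d-j+1} \setminus 0 \to \ST{j}{r+d} \to \ST{j-1}{r+d}$ coming from the chain $\GL_d \subset \GL_{d+1} \subset \cdots \subset \GL_{r+d}$. Given that input, the cohomological dimension hypothesis is precisely tailored to kill the obstruction groups, so the result follows at once.
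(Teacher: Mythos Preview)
Your proposal is correct and matches the paper's argument essentially step for step: reformulate generation as the lifting problem along $\Gr{r}{r+d} \to \BGL_r$, identify the fibre as $\ST{r}{r+d}$, use its $(d-1)$-connectivity to see that all Moore--Postnikov obstructions land in Nisnevich cohomology of degree $\geq d+1$, and conclude by the $\mathbb{A}^1$-cohomological dimension hypothesis. The only cosmetic difference is in how the connectivity estimate is obtained: the paper derives it from the stabilisation sequences $\Omega(\A^{n+1}\setminus 0) \to \ST{n-j}{n} \to \ST{n-j+1}{n+1}$ (fixing $j$ and increasing $n$), whereas you induct on the frame dimension via $\A^{r+d-j+1}\setminus 0 \to \ST{j}{r+d} \to \ST{j-1}{r+d}$; both are standard and yield the same bound.
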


The precise obstructions for a finitely generated projective module of rank $r$ over a smooth affine algebra of $\A^1$-cohomological dimension at most $d$ to be generated by $r+d-1$ or even by $r+d-2$ elements can also be identified, at least under certain hypotheses. We are able to generalize Murthy's result:

\begin{thmx}[{cf. \Cref{thm:different-murthy}}]\label{thm:A}Let $X=\spec R$ be a smooth affine variety of $\A^1$-cohomological dimension at most $d\geq 3$ over a perfect field $k$.  Let $M$ be a rank $r$ projective module over $R$, where $r\geq 2$. 
\begin{itemize}
\item If $d$ is even, $M$ can be generated by $d+r-1$ generators if and only if a {\em Segre class} of $M$ vanishes in the $d$-th Chow group of $X$.
\item If $d$ is odd and $M \oplus Q \simeq R^{d+r}$ for $Q$ a projective module of rank $d$, then $M$ can be generated by $d+r-1$ generators if and only if an {\em Euler class} of $Q$ is zero in the $d$-th twisted Chow--Witt group of $X$. Such a $Q$ always exists, and this condition is independent of its choice.
\end{itemize} 
\end{thmx}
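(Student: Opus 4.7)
The plan is to apply Moore--Postnikov obstruction theory to the lifting problem
\[
\xymatrix{
	& \Gr{r}{d+r-1} \ar[d] \\
\spec R \ar[r]\ar@{.>}[ur] & \BGL_r,
}
\]
whose homotopy fiber is the Stiefel variety $\ST{r}{d+r-1}=\GL_{d+r-1}/\GL_{d-1}$. From the fiber sequence $\ST{r-1}{n-1}\to\ST{r}{n}\to\A^{n}\smallsetminus 0$ arising from projection onto the first column, induction on $r$ (with base $\ST{1}{d}=\A^d\smallsetminus 0$) together with Morel's connectivity theorem shows that $\ST{r}{d+r-1}$ is $(d-2)$-$\A^1$-connected. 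Since $X$ has $\A^1$-cohomological dimension at most $d$, every potential obstruction above degree $d$ vanishes automatically, so the lifting is controlled by a single obstruction class lying in $H^d(X,\piA_{d-1}^{\A^1}(\ST{r}{d+r-1}))$, with local coefficients twisted via the $\piA_1^{\A^1}(\BGL_r)\cong\mathbb{G}_m$-action.

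The second step is to compute $\piA_{d-1}^{\A^1}(\ST{r}{d+r-1})$. Using the above fiber sequence iteratively, the long exact sequence in $\A^1$-homotopy sheaves, Morel's identification $\piA_{m-1}^{\A^1}(\A^m\smallsetminus 0)\cong\KMW_m$, and the fact that the relevant connecting maps are controlled by the Hopf element $\eta$, one expects the parity-dependent formula
\[
\piA_{d-1}^{\A^1}(\ST{r}{d+r-1})\;\cong\;
\begin{cases}
\KM_d & \text{if } d \text{ is even},\\
\KMW_d\ \text{(with a determinantal twist)} & \text{if } d \text{ is odd}.
\end{cases}
\]
Consequently, the obstruction sits in $\CH^d(X)$ when $d$ is even, and in a twisted Chow--Witt group $\CHW^d(X,\mathcal{L})$ for an appropriate line bundle $\mathcal{L}$ when $d$ is odd.

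The third step is to match this abstract obstruction with the classes in the statement. For $d$ even, the universal Moore--Postnikov class defines a characteristic class in $H^d(\BGL_r,\KM_d)$; comparing with the Chern-class calculation underlying Murthy's construction, I identify this universal class with the Segre class $s_0$, and pulling back to $X$ along the classifying map of $M$ yields $s_0(M)\in\CH^d(X)$. For $d$ odd, observe that $M$ is $(d+r-1)$-generated precisely when some $Q$ with $M\oplus Q\cong R^{d+r}$ splits off a trivial line bundle (existence of $Q$ follows from Forster--Swan since $M$ is projective and $(d+r)$-generated), and by the Morel/Asok--Fasel splitting criterion this splitting is controlled by the Euler class $e(Q)\in\CHW^d(X,\det Q)$. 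Matching $e(Q)$ with the Moore--Postnikov obstruction simultaneously establishes the criterion and shows that its vanishing is independent of the choice of $Q$, since the Moore--Postnikov obstruction is intrinsic to $M$.

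The main obstacle will be step three: identifying the characteristic class produced by the Postnikov tower with the concrete Segre and Euler classes, and (in the odd case) checking that the determinantal twist of the homotopy sheaf coincides with $\det Q$. Step two is also subtle at the boundary $r=2$, where the connecting map $\piA_d^{\A^1}(\A^{d+1}\smallsetminus 0)\to\piA_{d-1}^{\A^1}(\A^d\smallsetminus 0)\cong\KMW_d$ is multiplication by $\eta$; this is what produces the $\KM_d=\KMW_d/\eta$ cokernel in the even case, and requires careful input from Morel's calculations of the first stable $\A^1$-homotopy sheaf of motivic spheres.
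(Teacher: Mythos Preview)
Your setup and step two are essentially the paper's: the Moore--Postnikov tower for $\Gr{r}{r+d-1}\to\BGL_r$, the $(d-2)$-connectivity of the fiber so that only the obstruction in degree $d$ survives, and the parity-dependent identification $\piA_{d-1}\ST{r}{r+d-1}\cong\KM_d$ or $\KMW_d$ via the connecting map being $\eta$ or $0$. This part is fine.

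The substantive gap is step three, and your two proposed routes both fall short. In the even case you want to read off the universal $k$-invariant in $H^d(\BGL_r,\KM_d)=\CH^d(\BGL_r)$ and ``compare with Murthy's construction'' to see it is $s_d$. But $\CH^d(\BGL_r)$ is spanned by all degree-$d$ monomials in $c_1,\dots,c_r$; the most one gets from the tautological lift $\Gr{r}{r+d-1}\to\BGL_r$ is that the $k$-invariant lies in the kernel of the pullback, which is $\mathbb Z\cdot s_d$. You still owe an argument that the coefficient is a unit. In the odd case, your argument ``$M$ is $(r+d-1)$-generated $\Leftrightarrow$ some $Q$ with $M\oplus Q\cong R^{r+d}$ splits off $R$ $\Leftrightarrow$ $e(Q)=0$'' proves only the \emph{some}-$Q$ statement: if $M$ is $(r+d-1)$-generated you produce one particular $Q=Q'\oplus R$, but for an arbitrary complement $Q$ you would need to cancel $M$ from $M\oplus Q\cong M\oplus(Q'\oplus R)$, which is not available in general. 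You recognize this and propose to fix it by ``matching $e(Q)$ with the Moore--Postnikov obstruction,'' but you give no mechanism for that matching.

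The paper supplies exactly that mechanism, and it handles both parities at once. One builds a commuting diagram of horizontal fiber sequences
\[
\begin{tikzcd}[row sep=1.2em,column sep=1.4em]
\ST{r}{n} \ar[r] & \Gr{r}{n} \ar[r] & \BGL_r \\
(\A^{n-r+1}\!\setminus 0)\times\Omega(\A^{n+1}\!\setminus 0) \ar[u,"f"]\ar[d,"\mathrm{pr}_1"']\ar[r] & \Gr{r}{n} \ar[u,equal]\ar[d]\ar[r] & \Gr{r}{n+1} \ar[u]\ar[d] \\
\A^{n-r+1}\!\setminus 0 \ar[r] & \BGL_{n-r} \ar[r] & \BGL_{n-r+1},
\end{tikzcd}
\]
with $n=r+d-1$. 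A chosen presentation $M\oplus Q\cong R^{n+1}$ gives a point of $\Gr{r}{n+1}$ lifting $M$, and the bottom-right square sends it to $Q\in\BGL_{n-r+1}$. Now compare first obstructions in the three columns: since $r\ge 2$, $\mathrm{pr}_1$ is an isomorphism on $\piA_{n-r}$, so the middle obstruction equals $e_{n-r+1}(Q)$ on the nose; and the computation you did in step two says $f$ induces on $\piA_{n-r}$ either an isomorphism (so $o_{d-1}(M)=e_d(Q)$, the odd-$d$ case) or the quotient by $\eta$ (so $o_{d-1}(M)=c_d(Q)=s_d(M)$, the even-$d$ case). Because $o_{d-1}(M)$ is defined purely from $M$, independence of $Q$ is automatic. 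This comparison is the missing idea in your step three.
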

Under additional hypotheses on the field, we prove:
\begin{thmx}[{cf. \Cref{thm:line-bundle-generation,cor:murthy-segre-suff-quad-closed}}]\label{thm:B}Let $X=\spec R$ be a smooth affine variety of dimension at most $d\geq 3$ over a perfect field $k$. Suppose also that $k$ has $2$-cohomological dimension at most $1$. A rank $r$ projective module $P$ can be generated by $r+d-1$ elements as an $R$-module if and only if the Segre class $s_d(P)$ vanishes.
\end{thmx}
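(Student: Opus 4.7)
The plan is to derive this from \Cref{thm:A}. Since Krull dimension bounds $\A^1$-cohomological dimension, \Cref{thm:A} applies, and in the even $d$ case it yields exactly the statement; the entire content lies in the odd case.

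Assume $d$ is odd and fix $Q$ of rank $d$ with $P \oplus Q \cong R^{r+d}$, as \Cref{thm:A} provides. The obstruction to generating $P$ by $r+d-1$ elements is then the twisted Euler class $\tilde e(Q) \in \CHW^d(X, \det Q^{\vee})$. The forgetful map to $\CH^d(X)$ carries $\tilde e(Q)$ to the top Chern class $c_d(Q)$, which by the Whitney sum formula applied to $P \oplus Q \cong R^{r+d}$ equals $\pm s_d(P)$. Hence $\tilde e(Q) = 0$ implies $s_d(P) = 0$ without any hypothesis on $k$; only the converse uses $\operatorname{cd}_2(k) \leq 1$.

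For the converse I would use the cartesian square
\[
\begin{tikzcd}
\CHW^d(X, L) \ar[r] \ar[d] & \CH^d(X) \ar[d] \\
H^d(X, \mathbf{I}^d) \ar[r] & H^d(X, \KM_d/2)
\end{tikzcd}
\]
with $L = \det Q^{\vee}$ and bottom map induced by $\mathbf{I}^d / \mathbf{I}^{d+1} \cong \KM_d/2$ (Milnor conjecture). Via Rost--Schmid, the kernel of the forgetful map $\CHW^d(X,L) \to \CH^d(X)$ is a quotient of $H^d(X, \mathbf{I}^{d+1})$, so the task reduces to showing that $\tilde e(Q)$ is detected by its Chern image. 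Filtering by $\mathbf{I}^{d+1} \supset \mathbf{I}^{d+2} \supset \cdots$ and applying Voevodsky's theorem (so that $\KM_n(F)/2 = 0$ for $n \geq 2$ over any $F$ with $\operatorname{cd}_2(F) \leq 1$) together with the Gersten resolution of $\KM_j/2$, one obtains $H^d(X, \KM_j/2) = 0$ for $j \geq d+2$. This reduces the problem to controlling $H^d(X, \mathbf{I}^{d+1})$ and its interaction with the Euler class.

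The main obstacle is the boundary case $j = d+1$: via Gersten, $H^d(X, \KM_{d+1}/2)$ is a quotient of $\bigoplus_{x \in X^{(d)}} k(x)^\times / (k(x)^\times)^2$, which is typically nonzero even under the hypothesis, so one cannot simply conclude $H^d(X, \mathbf{I}^{d+1}) = 0$. The delicate point, where the hypothesis $\operatorname{cd}_2(k) \leq 1$ must be leveraged beyond naive vanishing, is to show that the image of $\tilde e(Q)$ in $H^d(X, \mathbf{I}^{d+1})$ lies in the image of the boundary $H^{d-1}(X, \KM_d/2) \to H^d(X, \mathbf{I}^{d+1})$, so that it is killed upon passing to $\CHW^d(X, L)$ once $c_d(Q) = 0$. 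This step requires exploiting structural properties of the Euler class (e.g.\ its compatibility with Chern classes modulo $2$ and with Bockstein boundaries), not merely the pullback square; with this identification in hand, vanishing of $s_d(P)$ forces $\tilde e(Q) = 0$ and completes the proof.
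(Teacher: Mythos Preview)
Your reduction to \Cref{thm:A} and the handling of even $d$ are correct and match the paper.  For odd $d$, the forward direction (Euler class of $Q$ vanishes $\Rightarrow$ $s_d(P)$ vanishes) is also fine.  The gap lies entirely in the converse for odd $d$, as you yourself flag.

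The paper does not argue Euler-class-specifically at all.  It simply invokes \cite[Proposition~5.2]{AF2}: for $X$ smooth of dimension $d$ over a field $k$ with $\operatorname{cd}_2(k)\le 1$, the natural map $\CHW^d(X,\mathcal L)\to\CH^d(X)$ is an \emph{isomorphism} for every line bundle $\mathcal L$.  Under this identification the Euler class of $Q$ is a unit multiple of $c_d(Q)=\pm s_d(P)$, and the proof is one line.  Your filtration argument is essentially an attempt to reprove that proposition from scratch; you correctly isolate the only nontrivial graded piece (controlling $H^d(X,\mathcal I^{d+1})\cong H^d(X,\KM_{d+1}/2)$, since $\mathcal I^{d+2}|_X=0$ under the hypothesis), but your proposed fix---``exploiting structural properties of the Euler class''---is both vague and unnecessary.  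The cited result says that the \emph{kernel itself} of $\CHW^d(X,\mathcal L)\to\CH^d(X)$ vanishes, i.e.\ the image of $H^d(X,\mathcal I^{d+1}(\mathcal L))$ in $\CHW^d(X,\mathcal L)$ is already zero; nothing about the particular class $\tilde e(Q)$ enters.  So the right move is to cite \cite[Proposition~5.2]{AF2} rather than to look for a bespoke argument tied to the Euler class.

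A smaller omission: \Cref{thm:A} assumes $r\ge 2$, so your reduction does not cover line bundles.  The paper handles $r=1$ separately in \Cref{thm:line-bundle-generation}, analyzing the Moore--Postnikov tower of $\mathbb P^{d-1}\to\BGL_1$ to identify the obstruction with a nonzero multiple of $c_1(L)^d$, and again invoking the isomorphism $\CHW^d(X,\mathcal L)\cong\CH^d(X)$ from \cite[Proposition~5.2]{AF2} to pass from Chow--Witt to Chow.
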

The key step to prove \Cref{thm:A} is our identification of the first possibly nontrivial obstruction to $n$-generation of a rank $r$ module $M$ over $X=\spec R$ a smooth affine variety, without any reference to dimension. When the ${\mathbb A}^1$-cohomological dimension of $X$ is greater than $n-r+1$, there is a secondary obstruction to consider. We can identify this obstruction under suitable hypotheses.

\begin{thmx}[{cf. \Cref{char-zero-segre-vanish-rd2,thm:rd2-alg-closed-char-neq-2}}]\label{thm:C}  Let $X = \spec R$ be a smooth affine variety of dimension at most $d\geq 4$ over an algebraically closed field $k$. Let $M$ be a projective $R$-module of rank $r\geq 2$. 
\begin{itemize}
\item If $k$ is characteristic zero, then $M$ can be generated by $r+d-2$ elements if and only if the $d$-th and $(d-1)$-st Segre classes of $M$ vanish. 
\item If $k$ has characteristic not equal to zero but $H^d(X,\pia_{d-1}(\A^{d-1}\setminus 0))=0$, then the same conclusion holds.
\end{itemize}
\end{thmx}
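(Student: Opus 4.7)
The plan is to carry out the Moore--Postnikov obstruction theory for the map
\[
\Gr{r}{r+d-2}\longrightarrow \BGL_r,
\]
whose $\A^1$-homotopy fiber (by \Cref{prop:fib_stiefel_pf}) is the Stiefel variety $\ST{r}{r+d-2}=\GL_{r+d-2}/\GL_{d-2}$. This fiber is $\A^1$-$(d-3)$-connected. Since $X$ has dimension $d$, only the two lowest Postnikov stages can yield nonzero obstructions: one in $H^{d-1}\bigl(X,\pia_{d-2}(\ST{r}{r+d-2})\bigr)$ and one in $H^d\bigl(X,\pia_{d-1}(\ST{r}{r+d-2})\bigr)$; any higher stage contributes to cohomology in degrees above $d$ and hence vanishes.

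To organize these obstructions I would factor the lifting problem through the intermediate tower
\[
\Gr{r}{r+d-2}\to \Gr{r}{r+d-1}\to \BGL_r.
\]
The primary Moore--Postnikov obstruction is then the one for lifting $X\to\BGL_r$ to $\Gr{r}{r+d-1}$, which by \Cref{thm:A} and \Cref{thm:B} (applicable because $k$ is algebraically closed) is exactly $s_d(M)\in \CH^d(X)$. Once $s_d(M)=0$ and a lift to $\Gr{r}{r+d-1}$ is chosen, the secondary lift along $\Gr{r}{r+d-2}\to \Gr{r}{r+d-1}$ is governed by the $\A^1$-homotopy fiber of this latter map. Comparing the fiber sequences $\ST{r}{n}\to \BGL_{n-r}\to \BGL_n$ for $n=r+d-2$ and $n=r+d-1$ identifies this fiber with the $\A^1$-homotopy fiber of the inclusion $\A^{d-1}\setminus 0\hookrightarrow \A^{r+d-1}\setminus 0$, whose lowest nonvanishing homotopy sheaf is $\pia_{d-2}(\A^{d-1}\setminus 0)\cong \KMW_{d-1}$ and whose next potentially relevant sheaf is $\pia_{d-1}(\A^{d-1}\setminus 0)$.

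The next step is to identify the resulting secondary obstruction in $H^{d-1}(X,\KMW_{d-1})$ with the Segre class $s_{d-1}(M)$. Over an algebraically closed field of characteristic zero the relevant Witt-theoretic contributions collapse, giving $H^{d-1}(X,\KMW_{d-1})\cong H^{d-1}(X,\KM_{d-1})=\CH^{d-1}(X)$; a universal computation on $\Gr{r}{r+d-1}$ matches the Postnikov $k$-invariant with the universal Segre class of the tautological bundle, and naturality of characteristic classes yields the identification of the obstruction with $s_{d-1}(M)$. In positive characteristic the sheaf $\pia_{d-1}(\A^{d-1}\setminus 0)$ contributes an additional potential obstruction in $H^d\bigl(X,\pia_{d-1}(\A^{d-1}\setminus 0)\bigr)$; the hypothesis that this group vanishes is precisely what is needed to kill the extra term and reduce the problem to the two Segre classes.

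The main difficulty will be the identification of the $k$-invariant with the universal Segre class $s_{d-1}$, which requires an explicit computation of the Postnikov differential on $\Gr{r}{r+d-1}$ and matching it against the known formula for the Segre class of the tautological bundle. A related subtlety is that the secondary obstruction a priori depends on the chosen lift to $\Gr{r}{r+d-1}$: the indeterminacy is controlled by the image of $H^{d-2}(X,\pia_{d-2}(\ST{r}{r+d-1}))$, which in the algebraically closed setting must be shown to act trivially on the $\CH^{d-1}$-valued obstruction so that $s_{d-1}(M)$ is a genuine invariant of $M$.
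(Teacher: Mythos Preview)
Your overall strategy is related to the paper's but organized differently, and there is a gap in the $r=2$ case.

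The paper actually gives two separate arguments. In characteristic zero it avoids obstruction theory entirely: write $M\oplus Q\cong R^{r+d}$ with $\operatorname{rk} Q=d$ by Forster--Swan, note that $c_d(Q)=c_{d-1}(Q)=0$ by hypothesis, invoke the resolution of Murthy's splitting conjecture in characteristic zero to split $Q\cong Q'\oplus R^2$, and conclude $M\oplus Q'\cong R^{r+d-2}$ by cancellation (Bass, Suslin, or Fasel depending on $r$). For the second bullet the paper does run Moore--Postnikov theory for $\Gr{r}{r+d-2}\to\BGL_r$, but in the opposite order to yours: the \emph{primary} obstruction, in $H^{d-1}$, is identified with $s_{d-1}(M)$ via the Euler class of a complementary bundle (\Cref{thm:first-obstr}), and the \emph{secondary} obstruction in $H^d(X,\pia_{d-1}\ST{r}{r+d-2})$ is handled by proving that the comparison map
\[
H^d\bigl(X,\pia_{d-1}\ST{r}{r+d-2}\bigr)\longrightarrow H^d\bigl(X,\pia_{d-1}\ST{r}{r+d-1}\bigr)
\]
is \emph{injective} under the hypothesis $H^d(X,\pia_{d-1}(\A^{d-1}\setminus 0))=0$. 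The image of the secondary obstruction is then the canonical primary obstruction for $(r{+}d{-}1)$-generation, which is $s_d(M)$. No universal $k$-invariant computation on the Grassmannian is needed.

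Your factorization through $\Gr{r}{r+d-1}$ runs into a concrete problem when $r=2$. The fiber of $\Gr{r}{r+d-2}\to\Gr{r}{r+d-1}$ is $(\A^{d-1}\setminus 0)\times\Omega(\A^{r+d-1}\setminus 0)$ (\Cref{prop:big_diagram-sp}), so
\[
\pia_{d-1}(\text{fiber})\;\cong\;\pia_{d-1}(\A^{d-1}\setminus 0)\oplus\pia_d(\A^{r+d-1}\setminus 0).
\]
For $r\ge 3$ the second summand vanishes by connectivity, but for $r=2$ it equals $\KMW_{d+1}$, and the stated hypothesis says nothing about $H^d(X,\KMW_{d+1})$. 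Your degree-$d$ obstruction therefore has an extra component you have not controlled; moreover that component genuinely depends on the chosen $(r{+}d{-}1)$-presentation, so even granting it is nonzero you would still owe an argument that it can be killed by varying the lift to $\Gr{r}{r+d-1}$. The paper's injectivity argument sidesteps this entirely by never passing through the homotopy of the intermediate fiber: it compares the Stiefel fibers directly, factoring through $\ST{r-1}{r+d-2}$ and using the exact sequences of \Cref{prop:necessary-computation} together with the vanishing of $\mathcal I^{d+1}$ over an algebraically closed field.
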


\begin{remark} \Cref{thm:Z} is also true for $d=1$; \Cref{thm:A} and \Cref{thm:B} hold in the case $r\geq 2$ and $d=2$; and \Cref{thm:C} remains true for $r \geq 2$ and $d=3$. However, the proofs require some modified technical arguments, which we omit from this article for brevity. See \Cref{rmk:rr1-discussion} for discussion.
\end{remark} 
The methods used to prove \Cref{thm:A,thm:B,thm:C} can be adapted to prove results for symplectic bundles. As such, we prove a symplectic Forster--Swan theorem.

\begin{thmx}[{cf. \Cref{thm:symplectic-F-S}}]\label{thm:thm:D}
Let $X = \spec R$ be a smooth affine variety of $\A^1$-cohomological dimension at most $d \geq 2$ over a perfect field $k$. Let $M$ be a symplectic module of rank $2r$. Then $M$ is a direct summand of a hyperbolic symplectic $R$-module of rank $2r+2\lfloor\frac{d}{2}\rfloor$.
\end{thmx}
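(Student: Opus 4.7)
The plan is to mirror the obstruction-theoretic approach behind \Cref{thm:Z}, replacing the general-linear Grassmannian by its symplectic analog. Affine representability identifies rank $2r$ symplectic $R$-modules with $\A^1$-homotopy classes of maps $\spec R \to \BSp_{2r}$, while symplectic summands of the rank $2n$ split hyperbolic symplectic module correspond to $\A^1$-homotopy classes of maps $\spec R \to \Grsp{r}{n}$. Thus writing $M$ as a direct summand of the rank $2n$ hyperbolic symplectic module is equivalent to producing a lift
\[
\xymatrix{
	& \Grsp{r}{n}\ar[d] \\
	\spec R \ar[r]^-{[M]}\ar@{.>}[ur] & \BSp_{2r}
}
\]
along the universal symplectic bundle. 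I will take $n = r + \lfloor d/2 \rfloor$ and argue that no obstruction to this lift can be nonzero.

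The first step is to identify the $\A^1$-homotopy fiber of the vertical map with the symplectic Stiefel variety $\STsp{r}{n} = \Sp_{2n}/\Sp_{2n-2r}$, via the symplectic analog of \Cref{prop:fib_stiefel_pf}. The key technical input is a connectivity estimate, obtained from the tower
\[
\Sp_{2n}/\Sp_{2n-2r} \to \Sp_{2n}/\Sp_{2n-2r+2} \to \cdots \to \Sp_{2n}/\Sp_{2n-2}
\]
of projections that forget one symplectic basis pair at a time. Each successive fiber is of the form $\Sp_{2m}/\Sp_{2m-2}$ for some $m \in \{n-r+1,\ldots,n\}$, and is $\A^1$-equivalent to $\A^{2m}\setminus 0$ (by projecting to the first chosen vector, whose fibers are affine). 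By Morel's connectivity theorem, $\A^{2m}\setminus 0$ is $\A^1$-$(2m-2)$-connected; the least connected fiber in the tower is $\A^{2(n-r+1)}\setminus 0$, so an induction up the tower yields that $\STsp{r}{n}$ is $\A^1$-$(2(n-r))$-connected.

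With this in hand, the standard Moore--Postnikov obstruction theory in the $\A^1$-homotopy category locates the successive obstructions to the desired lift in cohomology groups of the form $H^{i+1}(X, \pia_i^{\A^1}(\STsp{r}{n}))$ with possibly twisted coefficients. Connectivity forces these to be supported in cohomological degrees $i+1 \geq 2(n-r)+2$. With $n-r = \lfloor d/2 \rfloor$ one has $2(n-r)+2 \geq d+1$ in both parities of $d$, which strictly exceeds the $\A^1$-cohomological dimension of $X$. Consequently every obstruction vanishes, the lift exists, and $M$ is a symplectic direct summand of the hyperbolic module of rank $2r + 2\lfloor d/2 \rfloor$.

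The main technical hurdle I anticipate is the motivic connectivity estimate for $\STsp{r}{n}$: while the topological analog is classical, the motivic version requires care to justify the $\A^1$-equivalence $\Sp_{2m}/\Sp_{2m-2}\simeq \A^{2m}\setminus 0$ rigorously and to track how the connectivity of the fibers combines through the tower of fibrations in the unstable $\A^1$-homotopy category. Once the connectivity is established, the lifting argument is a direct adaptation of the proof of \Cref{thm:Z}.
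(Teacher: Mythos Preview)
Your proposal is correct and follows essentially the same approach as the paper: identify the lifting problem along $\Grsp{2r}{2n}\to\BSp_{2r}$, recognize the fiber as the symplectic Stiefel variety, and use its connectivity together with Moore--Postnikov obstruction theory to kill all obstructions when $n-r=\lfloor d/2\rfloor$. The only minor difference is how the connectivity is obtained: the paper stabilizes in $n$ (via \Cref{lem:cellularity2} and \Cref{cor:connectivity-sp}/\Cref{cor:first-symp-homotopy}), whereas you climb the tower in the quotient variable using \Cref{seq:change_quotient}; both yield that $\Sp_{2n}/\Sp_{2n-2r}$ is $\bigl(2(n-r)\bigr)$-connected, which is exactly what the argument needs.
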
 

We also identify some obstructions to expressing a symplectic module $M$ as a summand of a lower rank hyperbolic $R$-module in \Cref{thm:symplectic-odd-further-reduction,cor:symplectic-even-further-reduction}. These results depend on the parity of the (${\mathbb A}^1$-cohomological) dimension of $X$.

\subsection{Related literature}\label{subsec:lit} As already indicated, there are many variants of \Cref{Serre-swan} above. While this paper presents a motivic approach to efficient generation of projective modules and symplectic modules, work by many authors studies efficient generation in other cases. For example, Eisenbud and Evans proved a strengthening of the Forster--Swan theorem that considers how different generating sets are related \cite[Theorem B]{eisenbudevans73}. 
Extensive results have been obtained for ideals in polynomial rings in \cite{add1,add2} and \cite{add3}, and for ideals in overrings of polynomial rings in \cite{add4}. More recently, efficient generation results were proven for ideals in geometric subrings of polynomial rings in \cite{add5} and for ideals in discrete Hodge algebras in \cite{add6} and \cite{add7}.

\subsection{Paper outline}
\Cref{sec:homotopy_sheaves} is devoted to studying aspects of the Grassmannians, Stiefel manifolds and related spaces, both in the classical and symplectic settings, from the standpoint of motivic homotopy theory.  In particular, \Cref{sec:fiber} describes fiber sequences relating these spaces, \Cref{subsec:stablerange} uses these fiber sequences in conjunction with connectivity/weak-cellular estimates to establish stable ranges for homotopy sheaves, while \Cref{subsec:actual_computations} and \Cref{sebsec:seq-add} combine these results to describe some non-stable $\mathbb A^1$-homotopy sheaves of (symplectic) Stiefel varieties.  

In \Cref{total-obstructions}, we leverage the computations just described to establish our main results on efficient generation of projective modules. \Cref{subsec:absolute} recalls basic ingredients of obstruction theory involving Moore--Postnikov towers, \Cref{subsec:classical-invariants} briefly recalls some aspects of the theory of Euler classes, \Cref{subsec:generation-line-bundles} presents a slight detour to describe efficient generation of invertible modules, \Cref{subsec:first-obstr-classical} identifies the primary obstruction to efficient generation in terms of a quadratically enhanced Segre class, while \Cref{further-reduction} analyzes further reduction of the number of generators in terms of vanishing of secondary obstructions.  Finally, in \Cref{sec:sym-forster-swan}, we adapt the methods from the previous section to prove results for efficient generation of symplectic modules.

\subsection{Preliminaries and conventions}\label{subsec:prelims}
Fix a field $k$. We write $\operatorname{Sm}_k$ for the category of separated, smooth $k$-schemes. Write $\operatorname{Sh}_{\operatorname{Nis}}(\operatorname{Sm}_k)$ for the infinity category of Nisnevich sheaves of spaces on $\operatorname{Sm}_k$. The infinity category of motivic spaces $\operatorname{Spc}(k)$ is the full subcategory of $\operatorname{Sh}_{\operatorname{Nis}}(\operatorname{Sm}_k)$ spanned by $\A^1$-local sheaves, i.e., those Nisnevich sheaves of spaces  $\mathcal X$ such that for all $U \in \operatorname{Sm}_k$, the map $\mathcal X(U) \to \mathcal X(U \times \A^1)$ induced by projection $U \times \A^1 \to U$ is an equivalence. Throughout this paper, we work in the homotopy category $\mathcal H(k)$ of motivic spaces over a perfect field $k$, or the pointed version.  We refer the reader to \cite{MV99}.

We additionally make note of the following conventions and definitions:
\begin{enumerate}
 \item Given a pointed motivic space $(\mathcal X, x_0)$, we write $\pia_i(\mathcal X,x_0)$ for the Nisnevich sheafification of the presheaf of groups (or sets if $i=0$) on $\operatorname{Sm}_k$ given by $U \mapsto \pi_i\mathcal (X(U),x_0)$. If $\mathcal X\in \operatorname{Spc}_k$ is $\mathbb A^1$-simply connected, meaning $\pia_0(\mathcal X)=*$ and $\pia_1(\mathcal X)=*$, we write $\pia_i(\mathcal X)$ for its homotopy sheaf with respect to any basepoint (cf. \cite[\S3.2]{MV99} for discussion of homotopy sheaves). 
\item We use the word {\em fiber} exclusively to refer to the $\A^1$-homotopy fiber of a map of (pointed) motivic spaces.
\item We will frequently be concerned with {\em strictly $\A^1$-invariant sheaves} in the sense of \cite[\S6]{Morel} or \cite{stronglystrictly}. Such sheaves include the homotopy sheaves $\pia_i(\mathcal X)$ for $\mathcal X$ a motivic space and $i\geq 2$. 
\item Other examples of strictly $\A^1$-invariant sheaves include the $i$-th Milnor and Milnor--Witt $K$-theory sheaves $\KM_i$ and $\KMW_i$, which will feature prominently.
\item Given a strictly $\A^1$-invariant sheaf $\mathbf P$ and a smooth affine scheme $X$, we will frequently work with the $i$-th Nisnevich cohomology of $X$ with coefficients in $\mathbf P$, denoted $H^i(X,\mathbf P)$.  If $\mathbf P$ is moreover equipped with a $\mathbb G_m$-action and $\mathcal L$ is a line bundle over $X$, we may also consider $H^i(X,\mathbf P(\mathcal L))$, the $i$-th Nisnevich cohomology of $X$ with coefficients in $\mathbf P$ twisted by $\mathcal L$.
\item Let $X$ be a smooth $k$-scheme. We define the {\em $\A^1$-cohomological dimension of $X$} to be the smallest intger $d$ such that, for all $n > d$ and all strictly $\A^1$-invariant Nisnevich sheaves of abelian groups on $\operatorname{Sm}_k$, the (Nisnevich) cohomology group $H^{n}(X,\mathcal A)$ is zero. 
\item For $X$ a smooth $k$-scheme, we write $\CH^i(X)$ for the $i$-th Chow group group of codimension $i$ cycles on $X$. We write $\CHW^i(X)$ for the $i$-th Chow--Witt group of $X$.
\item Given motivic spaces $\mathcal X$ and $\mathcal Y$, we write $[\mathcal X,\mathcal Y]$ for $\A^1$-homotopy classes of maps from $\mathcal X$ to $\mathcal Y$, i.e., morphisms from $\mathcal X$ to $\mathcal Y$ in $\mathcal H(k)$.
\item Given a smooth $k$-scheme $X$, we abuse notation and use the same symbol $X$ for the motivic space associated to the motivic localization of the image of $X$ under the Yoneda embedding from $\operatorname{Sm}_k$ to presheaves of spaces on $\Sm_k$. 
\item We write $\GL_r$ for the $r \times r$ general linear group as a smooth $k$-scheme, and $\Sp_r$ for the group of $r \times r$ symplectic matrices.
\item\label{def:stiefel} We write $\ST{i}{j}$ and $\STsp{2i}{2j}$ ordinary and symplectic Stiefel varieties $\GL_j/\GL_{j-i}$ and $\Sp_{2j}/\Sp_{2j-2i}$, respectively. 
\item\label{def:Gras} Let $j>i\geq 0$ be integers. We write $\Gr{i}{j}= \GL_{j}/(\GL_{i} \times \GL_{j-i})$ for the the Grassmannian of $i$-planes in $\mathbb A^j$. Similarly, we write $\Grsp{2i}{2j} = \Sp_{2j}/(\Sp_{2i}\times \Sp_{2j-2i})$ for the symplectic Grassmannian.
\item We assume familiarity with affine representability results for vector bundles and symplectic vector bundles (see \cite[\S 8.1]{Morel}, \cite[Theorem 1]{AHW1}, and \cite[Theorem 1]{AHW2} for details). In particular, for $X$ a smooth affine variety over $k$, let $\operatorname{Vect}_r(X)$ denote the set of isomorphism classes of algebraic vector bundles of rank $r$ over the variety $X$. There is a natural bijection
$$\operatorname{Vect}_r(X) \cong [X,\BGL_r],$$
where we refer the reader to \cite[Page 1010]{AHW2} for the definition the classifying space functor $\operatorname B(-)$. Similarly, if $\operatorname{Vect}_{2r}^{\operatorname{Sp}}(X)$ denotes the set of isomorphism classes of symplectic rank $2r$ vector bundles over $X$, there is a natural bijection
$$\operatorname{Vect}_r^{\operatorname{Sp}}(X) \cong [X,\BSp_{r}].$$
\item We make extensive use of Moore--Postnikov theory in $\mathbb A^1$-homotopy theory. For the details of this obstruction theory, we refer the reader to \cite[Appendix~B]{Morel} and \cite[\S 6.1]{AF-splitting}.
\item Given a finite product of motivic spaces or stricly $\A^1$-invariant sheaves, we write $\operatorname{pr}_i$ for projection onto the $i$-th factor.

\end{enumerate}

\subsection{Acknowledgements} The second-named author was supported by an NSF Postdoctoral Research Fellowship, Award No. 2202914. The fourth-named author was partially funded by the Deutsche Forschungsgemeinschaft (DFG, German Research Foundation) - Project numbers 461453992; 544731044.

\section{Homotopy sheaves of Stiefel varieties}\label{sec:homotopy_sheaves}

In this section, we study certain homotopy sheaves of Stiefel varieties. In particular, we investigate $\pia_i\ST{n-j}{n}$ for $n>j>1$ and $i=j,j+1$ (recall notation from \Cref{subsec:prelims}, \Cref{def:stiefel}). These homotopy sheaves govern obstruction-theoretic problems involving efficient generation of projective modules. We begin in \Cref{sec:fiber} by establishing key fiber sequences. \Cref{subsec:stablerange} is a stepping stone, providing a stable range and vanishing results for homotopy sheaves of Stiefel varieties; we also note some results on weak cellularity classes of Stiefel varieties, which is a stronger notion than connectivity. In \Cref{subsec:actual_computations} we provide our main results on homotopy sheaves. \Cref{sebsec:seq-add} provides some additional technical results that are not used in our obstruction theory but that may be of independent interest.
\subsection{Fiber sequences involving Stiefel varieties}\label{sec:fiber}
We construct some fiber sequences that we will use both for computing homotopy sheaves in \Cref{subsec:stablerange,subsec:actual_computations}, and for our study of efficient generation in \Cref{total-obstructions}. We refer the reader to \Cref{def:stiefel} and \Cref{def:Gras} for notation used throughout this section.
\begin{proposition}\label{lem:varyN} Let $n>j\geq 1$. There are fiber sequences
\begin{align}
& \ST{n-j}{n} \to \BGL_{j} \to \BGL_n, \label[sequence]{eq:vary_rank}\\
&\ST{n-j}{n} \to  \Gr{j}{n} \to\BGL_j,   \label[sequence]{prop:fib_stiefel_pf}  \\
&\Omega (\A^{n+1}\setminus 0) \to \ST{n-j}{n} \to\ST{n-j+1}{n+1},\text{ and} \label[sequence]{eq:varyN1}\\
&\A^{j+1}\setminus 0 \to \ST{n-j}{n} \to \ST{n-j-1}{n}.\label{fiber2}
\end{align}
\end{proposition}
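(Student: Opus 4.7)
The plan is to derive all four fiber sequences from two basic principles in motivic homotopy theory. The first principle is that, for a closed subgroup inclusion $H \hookrightarrow G$ of smooth affine algebraic $k$-groups, the quotient map $G \to G/H$ is a Nisnevich-locally trivial principal $H$-bundle, yielding a fiber sequence $G/H \to BH \to BG$ in $\mathcal H(k)$. Iterating this for a chain $K \subset H \subset G$ produces the fiber sequence $H/K \to G/K \to G/H$, which may be extended one term to the right to include the classifying space of an appropriate subgroup. The second principle is the standard identification $\A^{m+1} \setminus 0 \simeq \GL_{m+1}/\GL_m$ in $\mathcal H(k)$, arising from the transitive action of $\GL_{m+1}$ on $\A^{m+1} \setminus 0$; the stabilizer of $e_1$ is an extension of $\GL_m$ by an $\A^1$-contractible unipotent radical.

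Once these are in place, each of the four sequences follows by an appropriate choice of subgroup chain. For the first sequence, apply the first principle directly to $\GL_j \hookrightarrow \GL_n$, whose quotient is $\ST{n-j}{n} = \GL_n/\GL_j$ by definition. For the second, apply the iterated form to $\GL_j \subset \GL_j \times \GL_{n-j} \subset \GL_n$, obtaining $\GL_{n-j} \to \ST{n-j}{n} \to \Gr{n-j}{n}$; extending to the right produces a fiber sequence ending in the classifying space of the relevant rank-$(n-j)$ bundle on the Grassmannian. For the third sequence, use the chain $\GL_j \subset \GL_n \subset \GL_{n+1}$ to obtain $\ST{n-j}{n} \to \ST{n-j+1}{n+1} \to \GL_{n+1}/\GL_n$, then invoke the second principle to rewrite the last term as $\A^{n+1}\setminus 0$ and loop once on the left. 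For the fourth, apply the iterated form to $\GL_j \subset \GL_{j+1} \subset \GL_n$, and again use the second principle to identify $\GL_{j+1}/\GL_j \simeq \A^{j+1}\setminus 0$.

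The main obstacle is to establish the first principle rigorously in the motivic context. Specifically, one must verify that $G \to G/H$ is Nisnevich-locally trivial, not merely \'etale-locally trivial, so that the scheme-theoretic quotient agrees with the $\A^1$-homotopy fiber of the classifying map $G/H \to BH$ in $\mathcal H(k)$. For $G = \GL_n$ with the block-embedded closed subgroups appearing here (such as $\GL_j$, $\GL_{j+1}$, or $\GL_j \times \GL_{n-j}$), this is classical and follows by constructing explicit sections over the standard open charts of Grassmannians and Stiefel varieties. A related subtlety is the identification in the second principle: the stabilizer of $e_1$ properly contains $\GL_m$ as a Levi subgroup, and one must check that the inclusion $\GL_m \hookrightarrow \mathrm{Stab}(e_1)$ is an $\A^1$-weak equivalence, which is clear since the quotient is isomorphic to $\A^m$ as a scheme.
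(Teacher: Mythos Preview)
Your proposal is correct and follows essentially the same route as the paper: both rely on Nisnevich-local triviality of the relevant $\GL$-torsors (what the paper cites from Asok--Hoyois--Wendt) together with the identification $\GL_{m+1}/\GL_m \simeq \A^{m+1}\setminus 0$. The only cosmetic difference is that for the second and fourth sequences the paper packages the chain-of-subgroups argument as a commutative square of classifying spaces and invokes an iterated-fiber lemma, whereas you phrase it directly in terms of a chain $K \subset H \subset G$; these are equivalent formulations of the same argument.
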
 
\begin{proof}
\Cref{eq:vary_rank} follows by combining \cite[Theorem 2.2.5]{AHW2} with \cite[Lemma 2.4.1]{AHW2}, 
which we also appeal to for the other items in combination with additional arguments.
For \Cref{prop:fib_stiefel_pf}, apply \cite[Proposition 3.1]{ABHWhitehead} to the commutative square:
	\[
	\begin{tikzcd}[row sep=1.5em, column sep = 2em]
	\BGL_j \times \BGL_{n-j}\ar[r,"\mathrm{pr}_1"]\ar[d,"\oplus"] & \BGL_{j} \ar[d]\\
	\BGL_n \ar[r] & *, 
	\end{tikzcd}
	\]
	where the left-hand vertical map classifies direct sum of vector bundles: we take the fiber of the induced map of horizontal fibers.
 \Cref{eq:varyN1} follows from applying simplicial loops to the base in the sequence \[\GL_n/\GL_j \to \GL_{n+1}/\GL_j\to \GL_{n+1}/\GL_n \simeq \A^{n+1} \setminus 0,\] which is a fiber sequence by  \cite[Theorem 2.2.5 and Lemma 2.4.1]{AHW2} again. 
For the last sequence, the argument is analogous \Cref{prop:fib_stiefel_pf}: the inclusions $\GL_j \subset \GL_{j+1} \subset \GL_n$ yields a commutative square of maps of classifying spaces
\[
\begin{tikzcd}[row sep=1.5em, column sep=1.5em]
	\BGL_j \ar[r]\ar[d]& \BGL_{j+1}\ar[d]\\
	\BGL_n \ar[r,"\operatorname{id}"]& \BGL_{n},
\end{tikzcd}
\]	
and we apply \cite[Proposition 3.1]{ABHWhitehead}. 
\end{proof}
We also make note of the following commutative diagram, which we will use to compare obstructions to efficient generation to better-understood obstructions.
\begin{proposition}\label{prop:big_diagram-sp} For $n>j \geq 1$, there is a commutative diagram:
\begin{equation}\label[diagram]{diag:big_diagram}
\begin{tikzcd}[column sep=1.5em, row sep=1.5em]
\ST{j}{n} \ar[r]&\Gr{j}{n} \ar[r] & \BGL_j \\
(\A^{n-j+1}\setminus 0)\times \Omega (\A^{n+1}\setminus 0) \ar[r]\ar[u]\ar[d,"\operatorname{pr}_1"] & \Gr{j}{n} \ar[r]\ar[u]\ar[d]& \Gr{j}{n+1}\ar[u] \ar[d]\\
\A^{n-j+1}\setminus 0 \ar[r]  \ar[r] & \BGL_{n-j} \ar[r] &\BGL_{n-j+1}.
\end{tikzcd}
\end{equation}
 Moreover, the horizontal sequences are all fiber sequences.

\end{proposition}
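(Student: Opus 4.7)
The plan is to derive each of the three rows as a fiber sequence and then observe that the diagram commutes by naturality. The bottom row follows from \Cref{eq:vary_rank} upon reindexing (take $j' = n-j$, $n' = n-j+1$, so that $\ST{1}{n-j+1} \simeq \A^{n-j+1}\setminus 0$), and the top row $\ST{j}{n} \to \Gr{j}{n} \to \BGL_j$ (interpreting $\Gr{j}{\mathbb A^n}$ as $\Gr{j}{n}$) follows by the same technique used in the proof of \Cref{prop:fib_stiefel_pf}. The substantive new work lies in identifying the fiber of $\Gr{j}{n} \to \Gr{j}{n+1}$ for the middle row.

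For this, I will apply \cite[Proposition 3.1]{ABHWhitehead} to the commutative square
\[
\begin{tikzcd}[column sep=1.5em, row sep=1.5em]
\BGL_j \times \BGL_{n-j} \ar[r] \ar[d, "\oplus"] & \BGL_j \times \BGL_{n-j+1} \ar[d, "\oplus"] \\
\BGL_n \ar[r] & \BGL_{n+1},
\end{tikzcd}
\]
where the horizontal maps are the standard stabilization inclusions and the vertical maps classify direct sum. The induced map on vertical fibers, via the Borel fiber sequence $\Gr{j}{n} \to \BGL_j \times \BGL_{n-j} \to \BGL_n$, is the stabilization $\Gr{j}{n} \to \Gr{j}{n+1}$; by \Cref{eq:vary_rank} the horizontal fibers are $\A^{n-j+1}\setminus 0$ and $\A^{n+1}\setminus 0$, with induced map the natural inclusion $\iota$. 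The ABH--Whitehead comparison then identifies the fiber of $\Gr{j}{n} \to \Gr{j}{n+1}$ with the fiber of $\iota$.

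The key remaining step, and the one I expect to be the main obstacle, is to show that $\iota\colon \A^{n-j+1}\setminus 0 \to \A^{n+1}\setminus 0$ is null-homotopic in $\mathcal{H}_\bullet(k)$ for $j \geq 1$; once null, its fiber is automatically $(\A^{n-j+1}\setminus 0) \times \Omega(\A^{n+1}\setminus 0)$, identifying the middle row. This null-homotopy is a consequence of Morel's connectivity theorem: the target $\A^{n+1}\setminus 0$ is $\A^1$-$(n-1)$-connected, while the source is a motivic sphere whose class in $[\A^{n-j+1}\setminus 0,\, \A^{n+1}\setminus 0]$ sits in a bigraded motivic homotopy sheaf that vanishes for $j \geq 1$ because the simplicial degree $n-j$ of the source is at most $n-1$. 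Commutativity of the full $3 \times 3$ diagram is then formal, following from the naturality of the Borel construction, of the ABH--Whitehead comparisons, of the tautological subbundle/quotient-bundle classifying maps, and of the projection $\operatorname{pr}_1$.
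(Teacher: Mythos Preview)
Your proposal is correct and follows essentially the same route as the paper: both arguments apply \cite[Proposition 3.1]{ABHWhitehead} to the square of classifying spaces $\BGL_j \times \BGL_{n-j} \to \BGL_j \times \BGL_{n-j+1}$ over $\BGL_n \to \BGL_{n+1}$, identify the induced map on fibers with $\A^{n-j+1}\setminus 0 \to \A^{n+1}\setminus 0$, and then invoke Morel's connectivity result (the paper cites \cite[Corollary 5.43]{Morel}) to conclude this map is null, giving the product fiber. Your treatment of the top and bottom rows and of commutativity is also in line with the paper's (implicit) handling.
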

\begin{proof}
The bottom row is a fiber sequence since $\GL_{n-j+1}/\GL_{n-j} \cong \A^{n-j+1} \setminus 0.$
For the second row, consider the iterated fiber of the commutative diagram:

\begin{equation}\label[diagram]{diag:big} \begin{tikzcd}[column sep=1.5em, row sep=1.5em]
& \A^{n-j+1}\setminus 0\ar[d] \ar[r] & \A^{n+1}\setminus 0\ar[d]\\
\Gr{j}{n} \ar[r]\ar[d]& \BGL_j \times \BGL_{n-j}  \ar[r]\ar[d]& \BGL_{n} \ar[d]\\ 
\Gr{j}{n+1}\ar[r]& \BGL_j \times \BGL_{n-j+1} \ar[r] & \BGL_{n+1}.
\end{tikzcd}
\end{equation}
By \cite[Corollary 5.43]{Morel}, the top horizontal map $\A^{n-j+1}\setminus 0 \to \A^{n+1} \setminus 0$ is nullhomotopic, so the fiber is  $\A^{n-j+1 }\setminus 0 \times \Omega \A^{n+1}\setminus 0.$ We conclude by \cite[Proposition 3.1]{ABHWhitehead}.
\end{proof}

We also note that there are various analogous fiber sequences with general linear groups replaced by symplectic groups. 
The proofs are completely analogous to the versions for general linear groups.
\begin{proposition}\label{prop:symplectic_fiber_seqs} Let $n> j\geq 1$ be given. There are fiber sequences:
\begin{align}& \STsp{2n-2j}{2n} \to \BSp_{2j} \to \BSp_{2n}, \label[sequence]{seq:rank_symp}\\
&\STsp{2n-2j}{2n} \to \Grsp{2j}{2n}\to \BSp_{2j},\label[sequence]{seq:stiefel} \\
&\Omega (\A^{2n+2}\setminus 0) \to \STsp{2n-2j}{2n} \to\STsp{2n-2j+2}{2n+2},\text{ and} \label[sequence]{eq:varyN}\\
& \A^{2j+2}\setminus 0 \to \STsp{2n-2j}{2n} \to \STsp{2n-2j-2}{2n}.\label[sequence]{seq:change_quotient}\
\end{align}
\end{proposition}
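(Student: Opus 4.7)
The plan is to mimic each of the four proofs from \Cref{lem:varyN}, substituting the symplectic group $\Sp_{2m}$ for $\GL_m$ throughout and invoking the symplectic counterparts of the representability results from \cite{AHW2} in place of the linear ones. Two geometric inputs enable this substitution: the symplectic version of \cite[Theorem 2.2.5 and Lemma 2.4.1]{AHW2}, which identifies the motivic fiber of $\BSp_{2j}\to\BSp_{2n}$ with the symplectic Stiefel variety $\Sp_{2n}/\Sp_{2n-2j}=\STsp{2n-2j}{2n}$; and the isomorphism $\Sp_{2n+2}/\Sp_{2n}\simeq\A^{2n+2}\setminus 0$ coming from the transitive action of $\Sp_{2n+2}$ on the punctured affine space with stabilizer $\Sp_{2n}$.

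With these in hand, \Cref{seq:rank_symp} is immediate from the first input. For \Cref{seq:stiefel}, I would apply \cite[Proposition 3.1]{ABHWhitehead} to the commutative square
\[
\begin{tikzcd}[row sep=1.5em, column sep=2em]
\BSp_{2j}\times \BSp_{2n-2j}\ar[r,"\mathrm{pr}_1"]\ar[d,"\oplus"] & \BSp_{2j}\ar[d]\\
\BSp_{2n}\ar[r] & *,
\end{tikzcd}
\]
whose left vertical arrow is the symplectic direct sum, then identify the fiber of $\oplus$ with $\Grsp{2n-2j}{\A^{2n}}$ using its quotient description $\Sp_{2n}/(\Sp_{2j}\times\Sp_{2n-2j})$. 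For \Cref{eq:varyN}, I would start from the fiber sequence
\[
\Sp_{2n}/\Sp_{2j}\longrightarrow \Sp_{2n+2}/\Sp_{2j}\longrightarrow \Sp_{2n+2}/\Sp_{2n}\simeq \A^{2n+2}\setminus 0
\]
(again a direct consequence of the symplectic analogs of \cite[Theorem 2.2.5 and Lemma 2.4.1]{AHW2}) and loop the base. Finally, for \Cref{seq:change_quotient}, I would take iterated fibers of the commutative square
\[
\begin{tikzcd}[row sep=1.5em, column sep=1.5em]
\BSp_{2j}\ar[r]\ar[d] & \BSp_{2j+2}\ar[d]\\
\BSp_{2n}\ar[r,"\mathrm{id}"] & \BSp_{2n},
\end{tikzcd}
\]
again via \cite[Proposition 3.1]{ABHWhitehead}; here the horizontal fiber is $\Sp_{2j+2}/\Sp_{2j}\simeq \A^{2j+2}\setminus 0$, yielding the required sequence.

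The main (and essentially only) issue is bookkeeping: one must keep the dimensional indices in the right doubled form and verify that the quotient spaces appearing as motivic fibers coincide with the scheme-theoretic Stiefel and Grassmannian varieties as defined in \Cref{subsec:prelims}. Since the symplectic affine representability theorems in \cite{AHW2} are proved in exactly the same framework as their linear counterparts, no new homotopical input beyond what was used for \Cref{lem:varyN} is required, and one simply transcribes those arguments mutatis mutandis.
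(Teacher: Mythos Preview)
Your proposal is correct and follows exactly the approach the paper takes: the paper's own proof simply states that ``the proofs are completely analogous to the versions for general linear groups,'' and you have spelled out precisely that transcription. One small indexing slip to fix (of just the sort you flagged): the fiber of $\BSp_{2j}\to\BSp_{2n}$ is $\Sp_{2n}/\Sp_{2j}$, not $\Sp_{2n}/\Sp_{2n-2j}$, and it is this quotient that equals $\STsp{2n-2j}{2n}$ under the paper's convention $\STsp{2i}{2j}=\Sp_{2j}/\Sp_{2j-2i}$.
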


Moreover, we have a symplectic version of \Cref{diag:big_diagram}, proved via a familiar argument.

\begin{proposition}\label{prop:symplectic_big_diagram}
 For $n>j \geq 1$, there is a commutative diagram:
\begin{equation}
\begin{tikzcd}[column sep=1.5em, row sep=1.1em]
\STsp{2j}{2n} \ar[r]&\Grsp{2j}{2n}\ar[r] & \BSp_{2j} \\
(\A^{2(n-j+1)}\setminus 0)\times (\Omega \A^{2(n+1)}\setminus 0) \ar[r]\ar[u]\ar[d,"p_1"] & \Grsp{2j}{2n}\ar[r]\ar[u]\ar[d]& \Grsp{2j}{2n+2}\ar[u] \ar[d]\\
\A^{2(n-j+1)}\setminus 0 \ar[r]  \ar[r] & \BSp_{2n-2j} \ar[r] &\BSp_{2n-2j+2} 
\end{tikzcd}
\end{equation}
where each horizontal row is a fiber sequence.
\end{proposition}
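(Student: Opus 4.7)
The plan is to mirror the argument that established \Cref{diag:big_diagram}, replacing general linear groups by symplectic groups throughout. First, I would assemble the symplectic analog of the $3\times 3$ iterated-fiber diagram used there:
\[
\begin{tikzcd}[column sep=1.5em, row sep=1.5em]
& \A^{2(n-j+1)}\setminus 0 \ar[d]\ar[r] & \A^{2(n+1)}\setminus 0 \ar[d]\\
\Grsp{2j}{\A^{2n}}\ar[r]\ar[d] & \BSp_{2j}\times \BSp_{2(n-j)}\ar[r]\ar[d] & \BSp_{2n}\ar[d]\\
\Grsp{2j}{\A^{2(n+1)}}\ar[r] & \BSp_{2j}\times \BSp_{2(n-j+1)}\ar[r] & \BSp_{2(n+1)}.
\end{tikzcd}
\]
In the lower-right $2\times 2$ square, the horizontal maps are induced by symplectic direct sum in the second factor, and the vertical maps are stabilization $\BSp_{2m}\to \BSp_{2m+2}$. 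By the symplectic analog of \cite[Theorem 2.2.5 and Lemma 2.4.1]{AHW2} invoked in \Cref{prop:symplectic_fiber_seqs}, the horizontal fibers of this square are precisely the symplectic Grassmannians forming the leftmost column, while the vertical fibers are $\Sp_{2m}/\Sp_{2m-2}\cong \A^{2m}\setminus 0$ at the appropriate indices; this immediately identifies the bottom row of the desired diagram as a fiber sequence coming from \Cref{prop:symplectic_fiber_seqs}.

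Next, I would verify that the top horizontal map $\A^{2(n-j+1)}\setminus 0 \to \A^{2(n+1)}\setminus 0$ is $\A^1$-nullhomotopic. Since $j\geq 1$, the inequality $2(n-j+1) < 2(n+1)$ holds, and the nullhomotopy then follows from the connectivity estimate \cite[Corollary 5.43]{Morel}, exactly as in the proof of \Cref{diag:big_diagram}. Consequently, the $\A^1$-homotopy fiber of this top map splits as the product $(\A^{2(n-j+1)}\setminus 0)\times \Omega(\A^{2(n+1)}\setminus 0)$, which is precisely the first term in the middle row of the claim.

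With these ingredients in place, the proof concludes by applying \cite[Proposition 3.1]{ABHWhitehead} to the lower-right $2\times 2$ square: the iterated-fiber identification realizes $(\A^{2(n-j+1)}\setminus 0)\times \Omega(\A^{2(n+1)}\setminus 0) \to \Grsp{2j}{\A^{2n}} \to \Grsp{2j}{\A^{2(n+1)}}$ as a fiber sequence, with the leftmost vertical projection to $\A^{2(n-j+1)}\setminus 0$ naturally inherited as $\operatorname{pr}_1$. The top row of the statement is then \Cref{seq:stiefel} from \Cref{prop:symplectic_fiber_seqs}, and commutativity of the full $3\times 3$ diagram is automatic from the functoriality of the iterated-fiber construction. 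The main expected obstacle is the nullhomotopy step, but since $j\geq 1$ forces the required strict inequality of ambient dimensions, \cite[Corollary 5.43]{Morel} applies directly, with no modifications needed for the symplectic setting.
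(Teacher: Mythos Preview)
Your proposal is correct and follows exactly the approach the paper intends: the paper states that \Cref{prop:symplectic_big_diagram} is ``proved via a familiar argument,'' referring to the proof of \Cref{prop:big_diagram-sp}, and you have faithfully transcribed that argument into the symplectic setting, including the key nullhomotopy step via \cite[Corollary 5.43]{Morel} and the iterated-fiber application of \cite[Proposition 3.1]{ABHWhitehead}. One minor note: the top row is not literally \Cref{seq:stiefel} but rather its analog with the roles of $j$ and $n-j$ exchanged (the fiber is $\STsp{2j}{2n}=\Sp_{2n}/\Sp_{2n-2j}$, not $\STsp{2n-2j}{2n}$), though the same proof applies.
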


\subsection{Stabilization via weak-cellular/connectivity estimates}\label{subsec:stablerange}
Note that
\[\ST{1}{j+1} =\GL_{j+1}/\GL_j \simeq \A^{j+1}\setminus 0.\] For $n>j+1$, the Stiefel variety $\ST{n-j}{n}=\GL_n/\GL_{j} $ does not admit such a simple description. However, computations of $\pia_{j}\ST{n-j}{n}$ and $\pia_{j+1} \ST{n-j}{n}$ stabilize as $n$ grows. 
\begin{lemma}\label{lem:cellularity1} For $n> j \geq 1$, the natural map $\ST{n-j}{n} \to \ST{n-j+1}{n+1}$ is a universal $S^{2n, n+1}$-equivalence, in the sense of \cite[Section 3]{ABH}.
\end{lemma}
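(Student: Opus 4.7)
The strategy is to combine the fiber sequence \Cref{eq:varyN1} with a cellular analysis of the fiber. By \Cref{eq:varyN1}, the homotopy fiber of the map $\ST{n-j}{n} \to \ST{n-j+1}{n+1}$ is $\Omega(\A^{n+1}\setminus 0)$. Using the standard motivic equivalence $\A^{n+1}\setminus 0 \simeq S^{2n+1,n+1}$, this fiber is identified with $\Omega S^{2n+1,n+1}$.

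The next step is to rewrite $S^{2n+1,n+1}$ as the simplicial suspension of the motivic sphere $S^{2n,n+1}$, so that the fiber becomes the loop space of this simplicial suspension. I would then argue that this loop space is $S^{2n,n+1}$-cellular by invoking a motivic James-type filtration, whose subquotients are the iterated smash powers $(S^{2n,n+1})^{\wedge k}= S^{2kn,\,k(n+1)}$ for $k\geq 1$. Each of these subquotients is $S^{2n,n+1}$-cellular, and hence so is the entire loop space. Alternatively, one can phrase this as a consequence of Morel's $\A^1$-connectivity theorem, observing that $\Omega S^{2n+1,n+1}$ is simplicially $(2n{-}1)$-connected with homotopy sheaves of weight at least $n+1$.

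With this cellularity of the fiber in hand, the conclusion would follow by invoking the characterization of universal $S^{p,q}$-equivalences from \cite[Section 3]{ABH} applied with $(p,q)=(2n,n+1)$: a map of pointed motivic spaces whose homotopy fiber is $S^{p,q}$-cellular is a universal $S^{p,q}$-equivalence. The main technical obstacle is establishing the needed cellular structure on the loop space of a motivic sphere; this is best handled by inducting on the James filtration and using the cofiber sequences whose cofibers are the iterated smash powers of $S^{2n,n+1}$, all of which have bidegree componentwise at least $(2n,n+1)$.
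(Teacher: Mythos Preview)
Your proposal is correct and follows essentially the same approach as the paper: identify the fiber as $\Omega(\A^{n+1}\setminus 0)\simeq \Omega S^{2n+1,n+1}$, establish that this loop space is weakly $S^{2n,n+1}$-cellular, and then invoke the fact from \cite{ABH} that a map with $S^{p,q}$-cellular fiber is a universal $S^{p,q}$-equivalence. The only difference is that where you sketch a James-filtration argument for the cellularity of $\Omega\Sigma S^{2n,n+1}$, the paper simply cites \cite[Proposition~4.2.1]{ABH} for this fact and \cite[Proposition~3.1.23]{ABH} for the conclusion, so your write-up is a bit more expansive but not substantively different.
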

\begin{proof}
Note that $\mathbb A^{n+1}\setminus 0 \simeq S^{2n+1,n+1}.$ Therefore by \cite[Proposition 4.2.1]{ABH}, $\Omega\mathbb A^{n+1}\setminus 0$ is weakly $S^{2n,n+1}$-cellular. By \cite[Proposition 3.1.23]{ABH}, the map $\GL_n/\GL_j \to \GL_{n+1}/\GL_j$ is a universal $S^{2n, n+1}$-equivalence.\end{proof}
\begin{corollary}\label{cor:stable-range} For $i \geq 0$ and $n>j\geq1$ the map $\pia_{i}\ST{n-j}{n} \to \pia_i\ST{n+1-j}{n+1}$ is:
\begin{enumerate}
\item An isomorphism if $i\leq j$ and $n\geq j+2$, or $i\leq j+1$ and $n\geq j+3$.
\item An epimorphism if $i=j$ and $n=j+1$, or $i=j+1$ and $n=j+2$.
\end{enumerate}
\end{corollary}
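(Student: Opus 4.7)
The plan is to extract both claims from the long exact sequence of $\A^1$-homotopy sheaves associated with the fiber sequence \Cref{eq:varyN1} from \Cref{lem:varyN}, namely
\[\Omega(\A^{n+1}\setminus 0) \to \ST{n-j}{n} \to \ST{n-j+1}{n+1},\]
combined with a connectivity estimate for the fiber.

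The key input is already recorded in the proof of \Cref{lem:cellularity1}: $\A^{n+1}\setminus 0 \simeq S^{2n+1,n+1}$. By Morel's $\A^1$-connectivity theorem, this motivic sphere is $2n$-$\A^1$-connected, so $\Omega(\A^{n+1}\setminus 0)$ is $(2n-1)$-$\A^1$-connected. In particular $\pia_k \Omega(\A^{n+1}\setminus 0) = 0$ for all $k \leq 2n-1$.

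With this vanishing in hand, the relevant five-term portion
\[\pia_i \Omega(\A^{n+1}\setminus 0) \to \pia_i \ST{n-j}{n} \to \pia_i \ST{n-j+1}{n+1} \to \pia_{i-1} \Omega(\A^{n+1}\setminus 0)\]
shows that the stabilization map is an epimorphism whenever $i-1 \leq 2n-1$, i.e.\ $i \leq 2n$, and an isomorphism whenever additionally $i \leq 2n-1$. A direct numerical check then disposes of the four cases in the statement: in part (1), either $i \leq j \leq 2n-3$ or $i \leq j+1 \leq 2n-5$, both comfortably within the isomorphism range; in part (2), $i = j \leq 2j+2 = 2n$ and $i = j+1 \leq 2j+4 = 2n$ land in the epimorphism range.

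There is no real obstacle here; the only minor care needed is at low homotopy degrees, where the terms in the fiber sequence are a priori sets or nonabelian groups. Since $n \geq j+1 \geq 2$ throughout, the fiber is at least $3$-connected, so the Stiefel varieties appearing in \Cref{eq:varyN1} are $\A^1$-simply connected in all cases we need and the relevant sheaves are abelian, making the long exact sequence available without modification.
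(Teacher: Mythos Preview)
Your overall strategy---using the long exact sequence of the fiber sequence $\Omega(\A^{n+1}\setminus 0) \to \ST{n-j}{n} \to \ST{n-j+1}{n+1}$---is sound and does suffice, but the connectivity claim at its heart is wrong. You assert that $\A^{n+1}\setminus 0 \simeq S^{2n+1,n+1}$ is $2n$-$\A^1$-connected; this confuses the topological picture (where the complex points form $S^{2n+1}$) with the motivic one. In fact $\A^{n+1}\setminus 0$ is only $(n-1)$-connected: by \cite[Theorem~6.40]{Morel} one has $\pia_n(\A^{n+1}\setminus 0)\cong \KMW_{n+1}\neq 0$, as the paper itself uses repeatedly. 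Consequently $\Omega(\A^{n+1}\setminus 0)$ is $(n-2)$-connected, not $(2n-1)$-connected, and your claimed vanishing $\pia_k\Omega(\A^{n+1}\setminus 0)=0$ for $k\leq 2n-1$ already fails at $k=n-1$.

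Fortunately the correct bound is exactly what the corollary needs. With $\pia_k\Omega(\A^{n+1}\setminus 0)=0$ for $k\leq n-2$, the long exact sequence yields an isomorphism for $i\leq n-2$ and an epimorphism for $i\leq n-1$; the hypotheses in (1) force $i\leq n-2$ and those in (2) give $i=n-1$, so the numerics go through. Your closing paragraph needs the same correction: in the boundary case $n=j+1=2$ the fiber is only $0$-connected, not $3$-connected, though this still suffices for exactness at the relevant terms. For comparison, the paper's proof takes a different route: \Cref{lem:cellularity1} records that the stabilization map is a universal $S^{2n,n+1}$-equivalence in the weak-cellularity sense of \cite{ABH}, and then \cite[Lemma~3.1.19]{ABH} converts this directly into the stated ranges for $\pia_i$. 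At the level of this corollary the two approaches give the same bounds, but the cellularity language also tracks the $\mathbb{G}_m$-weight, which is invisible to the raw long exact sequence.
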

\begin{proof} This follows from \cite[Lemma 3.1.19]{ABH}. \end{proof}
Inductively, starting from the fact that $\A^{j+1} \setminus 0$ is $(j-1)$-connected, we deduce:
\begin{corollary}\label{cor:connectivity} For each $n>j\geq 1$, the Stiefel variety $\ST{n-j}{n}$ is $(j-1)$-connected.
\end{corollary}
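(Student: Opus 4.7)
The plan is to prove the statement by induction on $n \geq j+1$, with $j \geq 1$ held fixed. The base case is $n = j+1$, where
\[\ST{1}{j+1} = \GL_{j+1}/\GL_j \simeq \A^{j+1}\setminus 0\]
is $(j-1)$-connected, as recorded in the lead-in to the statement.

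For the inductive step, suppose $\ST{n-j}{n}$ is $(j-1)$-connected for some $n \geq j+1$. Plug \Cref{eq:varyN1} of \Cref{lem:varyN},
\[\Omega(\A^{n+1}\setminus 0) \longrightarrow \ST{n-j}{n} \longrightarrow \ST{n-j+1}{n+1},\]
into the long exact sequence of $\A^1$-homotopy sheaves. Since $\A^{n+1}\setminus 0$ is $(n-1)$-connected, its simplicial loop space $\Omega(\A^{n+1}\setminus 0)$ is $(n-2)$-connected, which under the hypothesis $n \geq j+1$ is at least $(j-1)$-connected.

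For each $0 \leq i \leq j-1$, the relevant segment
\[\pia_i\ST{n-j}{n} \longrightarrow \pia_i\ST{n-j+1}{n+1} \longrightarrow \pia_{i-1}\Omega(\A^{n+1}\setminus 0)\]
has both outer terms trivial: the left by the inductive hypothesis, and the right because $\pia_{i-1}\Omega(\A^{n+1}\setminus 0) \cong \pia_i(\A^{n+1}\setminus 0)$ vanishes whenever $i \leq j-1 \leq n-1$. Exactness then forces $\pia_i\ST{n-j+1}{n+1}$ to vanish, completing the induction. No serious obstacle arises; the bottom of the long exact sequence ($i=0$) causes no trouble because both the fiber and total space are $\A^1$-connected under these hypotheses, and the argument is otherwise a direct bookkeeping exercise with the fiber sequences of \Cref{lem:varyN} and the known connectivity of affine-minus-origin.
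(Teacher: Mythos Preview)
Your proof is correct and follows essentially the same inductive route as the paper, which deduces the corollary in one line from the connectivity of $\A^{j+1}\setminus 0$ via the fiber sequence \Cref{eq:varyN1} (packaged there through \Cref{lem:cellularity1} and \Cref{cor:stable-range}). One small caveat: connectedness of the fiber and total space of a fiber sequence does not by itself force connectedness of the base, so for $i=0$ you should instead invoke the un-looped sequence $\ST{n-j}{n} \to \ST{n-j+1}{n+1} \to \A^{n+1}\setminus 0$ from the proof of \Cref{lem:varyN}, whose connected base and (inductively) connected fiber do give $\pia_0\ST{n-j+1}{n+1}=\ast$.
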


We note some results for symplectic Stiefel varieties, proved by completely analogous arguments to the corresponding ones above.
\begin{lemma}\label{lem:cellularity2} For $n> j \geq 1$, the map $\STsp{2n-2j}{2n} \to \STsp{2n+2-2j}{2n+2}$ is an $S^{4n+2,2n+2}$-equivalence.
\end{lemma}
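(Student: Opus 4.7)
The plan is to mimic the proof of \Cref{lem:cellularity1} verbatim, replacing the ordinary fiber sequence involving $\A^{n+1}\setminus 0$ with its symplectic counterpart from \Cref{prop:symplectic_fiber_seqs}. Specifically, I would invoke the fiber sequence \Cref{eq:varyN},
\[
\Omega(\A^{2n+2}\setminus 0)\longrightarrow \STsp{2n-2j}{2n}\longrightarrow \STsp{2n+2-2j}{2n+2},
\]
and then apply the abstract cellularity principles from \cite{ABH} to transfer cellularity of the fiber to the conclusion about the base-to-total map.

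First, I would identify the motivic sphere: one has $\A^{2n+2}\setminus 0\simeq S^{4n+3,2n+2}$. Then, by \cite[Proposition~4.2.1]{ABH}, looping drops the simplicial index by one while preserving the weight, so $\Omega(\A^{2n+2}\setminus 0)$ is weakly $S^{4n+2,2n+2}$-cellular. This is the symplectic analogue of the observation in the proof of \Cref{lem:cellularity1} that $\Omega(\A^{n+1}\setminus 0)$ is weakly $S^{2n,n+1}$-cellular.

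Finally, I would feed the fiber sequence above together with this cellularity statement into \cite[Proposition~3.1.23]{ABH}, which says that if the fiber of a map is weakly $S^{p,q}$-cellular, then the map itself is a universal $S^{p,q}$-equivalence. This immediately yields the claim that $\STsp{2n-2j}{2n}\to \STsp{2n+2-2j}{2n+2}$ is a(n even universal) $S^{4n+2,2n+2}$-equivalence.

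There is no real obstacle beyond checking that the indices match: the shift from weight $n+1$ in the general linear case to weight $2n+2$ in the symplectic case is forced by the fact that $\Sp_{2n+2}/\Sp_{2n}\simeq \A^{2n+2}\setminus 0$ rather than $\A^{n+1}\setminus 0$, and the corresponding simplicial sphere dimension $4n+2=2(2n+2)-2$ is precisely what drops out of \cite[Proposition~4.2.1]{ABH}. All the structural input—the fiber sequence and the two ABH propositions—has already been assembled, so the argument is genuinely a one-line deduction from \Cref{eq:varyN}.
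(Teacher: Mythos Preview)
Your proposal is correct and is exactly the approach the paper intends: the paper states that \Cref{lem:cellularity2} is ``proved by completely analogous arguments'' to \Cref{lem:cellularity1}, and your write-up carries out precisely that analogy, using the fiber sequence \Cref{eq:varyN}, the identification $\A^{2n+2}\setminus 0\simeq S^{4n+3,2n+2}$, and the same two results from \cite{ABH}.
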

\begin{corollary}\label{cor:stable-range-first-homotopy} For $i\geq 0$ and $n> j \geq 1$, the map 
$\pia_{i}\STsp{2n-2j}{2n} \to \pia_{i}\STsp{2n+2-j}{2n+2}$ is:
\begin{enumerate}
\item An isomorphism if $i\leq 2j+1$, or if $i=2j+2$ and $n>j+1$.
\item An epimorphism if $i=2j+2$ and $n=j+1$.
\end{enumerate}
\end{corollary}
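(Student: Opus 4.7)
The plan is to mimic the proof of Corollary~\ref{cor:stable-range} essentially verbatim, substituting Lemma~\ref{lem:cellularity2} for Lemma~\ref{lem:cellularity1} as the input. Once we know that the stabilization map $\STsp{2n-2j}{2n}\to\STsp{2n+2-2j}{2n+2}$ is a universal $S^{4n+2,\,2n+2}$-equivalence, the remainder reduces to a direct invocation of \cite[Lemma 3.1.19]{ABH}, which upgrades a universal $S^{a,b}$-equivalence to a statement about isomorphisms and epimorphisms on $\A^1$-homotopy sheaves in a range determined by the bidegree $(a,b)$ and the connectivity of the spaces in question.

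Before feeding into ABH, I would briefly record a connectivity estimate for the symplectic Stiefel varieties, by induction on $j$ using the fiber sequence \Cref{seq:change_quotient}; the base case $\STsp{2}{2n}\simeq \A^{2n}\setminus 0$ is $(2n-2)$-connected, and since each fiber $\A^{2j+2}\setminus 0$ is $2j$-connected, one concludes that $\STsp{2n-2j}{2n}$ is $2j$-connected for $n>j\ge 1$.

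The key step is then tracking how the numerics work out. In the classical case, Lemma~\ref{lem:cellularity1} gave the bidegree $(2n, n+1)$ and the resulting cutoff was $i\le j$ (extended to $i\le j+1$ under the stronger inequality $n\ge j+3$), matching the $(j-1)$-connectivity of $\ST{n-j}{n}$ (Corollary~\ref{cor:connectivity}). In the symplectic setting the bidegree becomes $(4n+2, 2n+2)$ and the connectivity doubles to $2j$, which shifts the stable cutoff to $i\le 2j+1$ unconditionally, with a borderline enhancement to $i=2j+2$ whenever $n>j+1$ provides the extra slack, and an epimorphism (rather than an isomorphism) at $i=2j+2$ precisely in the edge case $n=j+1$.

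The main obstacle is purely bookkeeping: verifying that the bidegree $(4n+2,2n+2)$ interacts with the connectivity bound $2j$ in exactly the way needed to reproduce the stated cutoffs, including the slightly asymmetric treatment of $i=2j+2$ according to whether $n>j+1$ or $n=j+1$. Since this analysis is structurally identical to the classical one (only the numerics rescale), no new homotopy-theoretic ideas are required; the real work has already been done in Lemma~\ref{lem:cellularity2}.
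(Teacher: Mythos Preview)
Your proposal is correct and follows exactly the paper's approach: the paper's ``proof'' is the blanket remark that the symplectic statements are ``proved by completely analogous arguments to the corresponding ones above,'' and the corresponding argument for \Cref{cor:stable-range} is a one-line citation of \cite[Lemma~3.1.19]{ABH} applied to the cellularity lemma. Your write-up simply unpacks that analogy with the symplectic numerics (and your connectivity estimate, while not strictly needed as input here, is in fact one degree sharper than the paper's own \Cref{cor:connectivity-sp}).
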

\begin{corollary}\label{cor:connectivity-sp} For each $n>j\geq 1$, the symplectic Stiefel variety $\STsp{2n-2j}{2n}$ is $2j$-connected.
\end{corollary}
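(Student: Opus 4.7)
The plan is to mirror the proof of the classical analogue \Cref{cor:connectivity} via descending induction on $j$ (with $n$ fixed), using as the driving step the fiber sequence \Cref{seq:change_quotient}
\[
\A^{2j+2}\setminus 0 \to \STsp{2n-2j}{2n} \to \STsp{2n-2j-2}{2n},
\]
together with Morel's computation that $\A^m\setminus 0$ is $(m-2)$-connected as a motivic space.

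For the base case $j = n-1$, note that $\STsp{0}{2n} = \Sp_{2n}/\Sp_{2n} \simeq *$, so the fiber sequence above specializes to $\A^{2n}\setminus 0 \to \STsp{2}{2n} \to *$, identifying $\STsp{2}{2n} \simeq \A^{2n}\setminus 0$. This target is $(2n-2)$-connected, which comfortably exceeds the required $(2(n-1)-1) = (2n-3)$-connectivity. For the inductive step, I would assume $\STsp{2n-2j-2}{2n}$ is $(2(j+1)-1) = (2j+1)$-connected. Then in \Cref{seq:change_quotient}, the fiber $\A^{2j+2}\setminus 0$ is $(2j)$-connected, and combined with the $(2j+1)$-connected base, the long exact sequence of motivic homotopy sheaves yields that $\STsp{2n-2j}{2n}$ is at least $(2j)$-connected, which is stronger than the claimed $(2j-1)$-connectivity.

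I do not expect a substantive obstacle: the paper explicitly flags that the symplectic results in this subsection go through by completely analogous arguments to the classical ones. The only minor point to check is that \Cref{seq:change_quotient}, which is stated for $n > j \geq 1$, is applied at the boundary value $j = n-1$ to produce the degenerate fiber sequence $\A^{2n}\setminus 0 \to \STsp{2}{2n} \to *$; this is already covered by \cite[Theorem 2.2.5 and Lemma 2.4.1]{AHW2} via the transitive action of $\Sp_{2n}$ on nonzero vectors of $\A^{2n}$ with motivic stabilizer $\Sp_{2n-2}$.
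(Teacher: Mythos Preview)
Your argument is correct, but it is not quite the mirror of the paper's proof of \Cref{cor:connectivity} that you describe. The paper's proof there inducts on $n$ with $j$ fixed: it starts from $\ST{1}{j+1}\simeq\A^{j+1}\setminus 0$ and propagates $(j-1)$-connectivity along the maps $\ST{n-j}{n}\to\ST{n+1-j}{n+1}$ using \Cref{eq:varyN1} (equivalently, the cellularity estimate \Cref{lem:cellularity1}). The ``completely analogous'' symplectic argument the paper has in mind would likewise fix $j$, start from $\STsp{2}{2j+2}\simeq\A^{2j+2}\setminus 0$, and induct upward in $n$ via \Cref{eq:varyN}. You instead fix $n$ and run a descending induction on $j$ using the other fiber sequence \Cref{seq:change_quotient}. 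Both routes are equally short and both work; yours has the mild bonus of directly showing that $\STsp{2n-2j}{2n}$ is actually $2j$-connected (a fact the paper only records later, in \Cref{cor:first-symp-homotopy}), while the paper's route meshes more directly with the stabilization results \Cref{lem:cellularity2} and \Cref{cor:stable-range-first-homotopy} that immediately precede the corollary.
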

\subsection{Computing homotopy sheaves of Stiefel varieties}\label{subsec:actual_computations}
We now directly study $\pia_{j}\ST{n-j}{n}$ and \\$\pia_{j+1}\ST{n-j}{n}$ for $n>j+1$. Note that $\pia_{j}\ST{1}{j+1}\cong \KMW_{j+1}$ by \cite[6.40]{Morel}. 
The homotopy sheaf \[\pia_{j+1}\ST{1}{j+1}\cong \pia_{j+1}(\A^{j+1}\setminus 0)\] 
is not computed, but it has been extensively studied and so is often amenable to cohomology computations. 
In light of \Cref{cor:stable-range}, the problem for $n>j+1$ reduces to computing $\pia_j \ST{2}{j+2}$ (\Cref{pirr2}),  $\pia_{j+1}\ST{2}{j+2}$ (\Cref{pir1GLr2}), and
$\pia_{j+1} \ST{3}{j+3}$ (\Cref{pir1GLr3}).
To approach these groups, we use certain fiber sequences from the previous section, which we reintroduce below to establish notation.
\begin{definition}\label{diag:abnotation}
We let $a_{n-j}^j$ and $b_{n-j}^j$ denote the morphisms in the fiber sequence of \Cref{fiber2}, so that we have a fiber sequence:
\[\A^{j+1}\setminus 0 \xrightarrow{a_{n-j}^j}\ST{n-j}{n}  \xrightarrow{b_{n-j}^j} \ST{n-j-1}{n}\]
\end{definition} 
The fiber sequence in \Cref{diag:abnotation} induces a long exact sequence on homotopy sheaves:
\begin{equation}\label[sequence]{LES-key}\begin{tikzcd}[row sep=2.5em, column sep = 4em] 
\pia_{j+2}(\A^{j+1}\setminus 0) \ar[r,"\pi_{j+2}a_{n-j}^j" below] 
& \pia_{j+2}\ST{n-j}{n} \ar[r,"{\pi_{j+2}b_{n-j}^j}"] 
& \pia_{j+2}\ST{n-j-1}{n} \ar[dll,"\partial_{n-j}^j" near start]\\
\pia_{j+1}(\A^{j+1}\setminus 0) \ar[r,"\pi_{j+1}a_{n-j}^j" below] 
& \pia_{j+1}\ST{n-j}{n} \ar[r,"{\pi_{j+1}b_{n-j}^j}"] 
& \pia_{j+1}\ST{n-j-1}{n}\ar[dll,"d_{n-j}^j" near start]\\
\pia_{j}(\A^{j+1}\setminus 0)\ar[r,"{\pi_ja_{n-j}^j}" below] 
& \pia_j\ST{n-j}{n} \ar[r,"{\pi_{j}b_{n-j}^j}"] &0.
\end{tikzcd}
\end{equation}
Note also that we have compatibility between the instances of \Cref{LES-key} as $n$ varies: the commutative diagram
 \begin{equation}\label[diagram]{diag:abnotation-compare}\begin{tikzcd}[column sep = 3.5em]\A^{j+1}\setminus 0 \ar[r,"{a_{n-j}^j}"] \ar[d,"\simeq"]& \ST{n-j}{n} \ar[r,"{b_{n-j}^j}"] \ar[d]&\ST{n-j-1}{n}\ar[d]\\
\A^{j+1}\setminus 0  \ar[r,"{a_{n-j+1}^j}"] & \ST{n+1-j}{n+1} \ar[r,"{b_{n-j+1}^j}"]&\ST{n-j}{n+1} \\
\end{tikzcd}\end{equation} induces a map of long exact sequences. 
\begin{lemma}\label{pirr2} Let $j \geq 1$. Then 
\[\pia_j\ST{2}{j+2} \simeq \begin{cases} \KMW_{j+1} & j \text{ even }\\
\KM_{j+1} & j \text{ odd.}\end{cases}\]
Moreover, the morphism $\pia_{j}a_2^j\: \pia_{j}(\A^{j+1}\setminus 0 )\to\pia_j \ST{2}{j+2}$ is an isomorphism for $j$ even and the quotient of  $\KMW_{j+1}$ by $\eta$ for $j$ odd.
\end{lemma}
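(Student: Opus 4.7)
The plan is to apply the long exact sequence \Cref{LES-key} with $n = j+2$, arising from the fiber sequence
\[
\A^{j+1}\setminus 0 \xrightarrow{a_2^j} \ST{2}{j+2} \xrightarrow{b_2^j} \ST{1}{j+2} \simeq \A^{j+2}\setminus 0.
\]
First I will input Morel's identification $\pia_{j+1}\ST{1}{j+2}\cong\KMW_{j+2}$ together with the vanishing $\pia_j\ST{1}{j+2}=0$ from \Cref{cor:connectivity}; combined with the stated identification $\pia_j(\A^{j+1}\setminus 0)\cong\KMW_{j+1}$, the relevant segment of \Cref{LES-key} collapses to the four-term right-exact sequence
\[
\KMW_{j+2} \xrightarrow{d_2^j} \KMW_{j+1} \xrightarrow{\pia_j a_2^j} \pia_j\ST{2}{j+2} \longrightarrow 0.
\]
Thus $\pia_j\ST{2}{j+2}\cong\operatorname{coker}(d_2^j)$, and $\pia_j a_2^j$ is the quotient map.

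The heart of the proof is the identification of the connecting homomorphism $d_2^j\colon \KMW_{j+2}\to\KMW_{j+1}$. My plan is to realize $b_2^j$ as the complement of the zero section of a rank-$(j+1)$ quotient bundle $Q = \mathcal{O}^{j+2}/\mathcal{L}$ on $\A^{j+2}\setminus 0$, where $\mathcal{L}$ is the pullback of the tautological line bundle on $\mathbb{P}^{j+1}$. Under this identification the differential $d_2^j$ is cup-product with the motivic Euler class of $Q$. Computing this Euler class in terms of the motivic Hopf element $\eta\in\KMW_{-1}$ and the unit $\langle -1\rangle$ yields the parity-dependent factor $(1 + \langle -1\rangle^{j+1})\eta$. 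Invoking the Milnor--Witt relation $h\eta = 0$ (with $h = 1 + \langle -1\rangle$): for $j$ even the factor reads $h\eta$, forcing $d_2^j = 0$; for $j$ odd the image of $d_2^j$ is $\eta\cdot\KMW_{j+2}$. A classical sanity check is that the analogous topological differential for $V_2(\mathbb{R}^{j+2})\to S^{j+1}$ is multiplication by $1 + (-1)^{j+1}$.

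Granted the identification of $d_2^j$, the two cases follow quickly. For $j$ even, $d_2^j = 0$ makes $\pia_j a_2^j$ an isomorphism $\KMW_{j+1}\xrightarrow{\cong}\pia_j\ST{2}{j+2}$. For $j$ odd, the standard presentation $\KM_{j+1} = \KMW_{j+1}/\eta$ identifies $\pia_j\ST{2}{j+2}\cong\KM_{j+1}$, with $\pia_j a_2^j$ the natural quotient map. The principal obstacle is the Euler class computation in the middle step: while the realization of $b_2^j$ as a sphere bundle is essentially formal, extracting the parity-sensitive factor $(1 + \langle -1\rangle^{j+1})\eta$ requires careful bookkeeping with signs arising from $\langle -1\rangle$ in the multiplicative structure of Milnor--Witt $K$-theory.
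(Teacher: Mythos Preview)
Your overall strategy is exactly the paper's: extract from \Cref{LES-key} at $n=j+2$ the right-exact sequence
\[
\KMW_{j+2}\xrightarrow{d_2^j}\KMW_{j+1}\xrightarrow{\pia_j a_2^j}\pia_j\ST{2}{j+2}\to 0
\]
and then identify $d_2^j$. The paper does this last step by citing \cite[Lemma~3.5]{AF14}, which asserts $d_2^j=0$ for $j$ even and $d_2^j=\eta$ (up to an automorphism of $\KMW_{j+2}$) for $j$ odd. You instead sketch the geometry behind that lemma, realizing $b_2^j$ as the complement of the zero section of $Q=\mathcal{O}^{j+2}/\mathcal{L}$ and identifying $d_2^j$ with multiplication by the Euler class of $Q$. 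This is a legitimate and illuminating route, and it is essentially what underlies the cited result.

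There is, however, a genuine gap in your explicit formula. For $j$ odd your expression $(1+\langle -1\rangle^{j+1})\eta$ evaluates to $2\eta$, and you then assert that the image of $d_2^j$ is $\eta\cdot\KMW_{j+2}$. But in $\KMW_*$ one has $2=h-\eta[-1]$ and $h\eta=0$, so $2\eta=-\eta^2[-1]$; hence the image of multiplication by $2\eta$ is contained in $\eta^2\KMW_{j+3}$, which is in general strictly smaller than $\eta\KMW_{j+2}$ (e.g.\ over $k=\mathbb{R}$). Thus the cokernel of $2\eta$ need not be $\KM_{j+1}$, and your odd-$j$ conclusion does not follow from the formula as written. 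The correct output of the Euler-class computation is $\eta$ up to a unit of $\operatorname{GW}(k)$, not $2\eta$; the discrepancy comes from the bookkeeping with the twist by $\det Q$ and the orientation of the fiber, which is precisely the ``careful bookkeeping with signs'' you flag but do not carry out. Either complete that computation to land on a unit multiple of $\eta$, or, as the paper does, invoke \cite[Lemma~3.5]{AF14} directly.
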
 
\begin{proof} Consider a portion of \Cref{LES-key} in the case $n=j+2$:

\begin{equation}\label[empty]{ES2} \pia_{j+1}\ST{1}{j+2} \xrightarrow{d_2^j} \pia_{j} (\A^{j+1} \setminus 0) \xrightarrow{\pi_{j}a_2^j}
\pia_j\ST{2}{j+2}\to 0.
\end{equation}
Note that $\pia_{j+1}\ST{1}{j+2} \simeq \KMW_{j+2},$ so we have an exact sequence
\begin{equation}\label[diagram]{ES3} \KMW_{j+2} \xrightarrow{d_2^j} \KMW_{j+1}\to 
\pia_j\GL_{j+2} / \GL_j\to 0
\end{equation}
and it remains to compute $d_2^j$. By \cite[Lemma 3.5]{AF14}, we  can identify $d_2^j$ up to automorphism of $\KMW_{j+2}$ as follows:
\begin{equation}\label[empty]{defd}d_2^j=\begin{cases} 0 & j \text{ even }\\
\eta & j\text{ odd.}\end{cases}\end{equation}
Therefore $\pia_{j}\ST{2}{j+2} \cong \operatorname{coker}(d_2^j)$, which is  $\KMW_{j+1}$ for $j$ even and
$\KM_{j+1} $ for $j$ odd.
\end{proof}
By \Cref{cor:stable-range}, we deduce:
\begin{corollary}\label{cor:general-first-sheaf-compuation} Let $n\geq j+2\geq 3$ be arbitrary. Then 
\[\pia_j\ST{n-j}{n} \cong \begin{cases} \KMW_{j+1} & j \text{ even }\\
\KM_{j+1} & j \text{ odd.}\end{cases}\]
Moreover, the map $a_{n-j}^j\: \A^{j+1}\setminus 0 \to \ST{n-j}{j}$ 
from \Cref{diag:abnotation} induces an epimorphism
\[\pia_{j}a^j_{n-j}\: \KMW_{j+1} \to \pia_{j} \ST{n-j}{n}.\]
 When $j$ is even, $\pia_ja^j_{n-j}$ is an isomorphism;
when $j$ is odd, $\pia_ja^j_{n-j}$ is the quotient quotient of $\KMW_{j+1}$ by $\eta$.
\end{corollary}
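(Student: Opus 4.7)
The plan is to bootstrap from the base case $n=j+2$ (handled by \Cref{pirr2}) to arbitrary $n \geq j+2$, using the stable range of \Cref{cor:stable-range} together with the compatibility diagram \Cref{diag:abnotation-compare}.

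First, I would invoke item (1) of \Cref{cor:stable-range} with $i=j$: for every $n \geq j+2$, the stabilization map $\pia_j\ST{n-j}{n} \to \pia_j\ST{n+1-j}{n+1}$ is an isomorphism. Iterating, the system $\{\pia_j\ST{n-j}{n}\}_{n \geq j+2}$ is essentially constant and canonically identifies with $\pia_j\ST{2}{j+2}$, which by \Cref{pirr2} is $\KMW_{j+1}$ when $j$ is even and $\KM_{j+1}$ when $j$ is odd. This yields the first assertion.

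To identify the map $\pia_j a^j_{n-j}$, I would apply $\pia_j$ to the left-hand square of \Cref{diag:abnotation-compare}. The left vertical arrow there is the canonical equivalence of $\A^{j+1}\setminus 0$ with itself, while the middle vertical arrow induces the stabilization isomorphism on $\pia_j$ by the previous step. Hence, by induction on $n$ starting from $n=j+2$, the map $\pia_j a^j_{n-j}$ is identified, under the canonical isomorphisms, with $\pia_j a^j_2$. By the second sentence of \Cref{pirr2}, the latter is an isomorphism for $j$ even and the quotient $\KMW_{j+1} \twoheadrightarrow \KMW_{j+1}/\eta \cong \KM_{j+1}$ for $j$ odd. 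Epimorphicity follows in either case.

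I do not anticipate any substantive obstacle, since the content is already contained in \Cref{pirr2} and \Cref{cor:stable-range}. The one sanity check I would carry out is that the hypotheses $i \leq j$ and $n \geq j+2$ of \Cref{cor:stable-range}(1) are precisely the hypotheses of the corollary, so the induction genuinely begins at the base case computed by \Cref{pirr2}.
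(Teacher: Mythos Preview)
Your proposal is correct and matches the paper's approach: the paper simply records the corollary as following from \Cref{cor:stable-range} (with \Cref{pirr2} supplying the base case), and your argument spells out exactly this, together with the use of \Cref{diag:abnotation-compare} to propagate the identification of $\pia_j a^j_{n-j}$ along the stabilization isomorphisms.
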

The computation of $\pia_{j}a_{n-j}^j$ and $d_{n-j}^j$ allows us to deduce some results about terms involving $(j+1)$-st homotopy groups in \Cref{LES-key}.
\begin{proposition}\label{prop:necessary-computation}
Let $n \geq j+2$. For $j\geq 0$ is even, we have an exact sequence
\[ \pia_{j+1}(\A^{j+1} \setminus 0) \xrightarrow{\pia_{j+1} a_{n-j}^j} \pia_{j+1}\ST{n-j}{n} \xrightarrow{\pia_{j+1}b_{n-j}^j}\pia_{j+1} \ST{n-j-1}{n} \to 0\]
For $j \geq 1$ odd, we have an exact sequence
\[ \pia_{j+1}(\A^{j+1} \setminus 0) \xrightarrow{\pia_{j+1} a_{n-j}^j} \pia_{j+1}\ST{n-j}{n} \xrightarrow{}2\KM_{j+2}\to 0,\]
where the identification of the last term is via $2\KM_{j+2} \cong \operatorname{Im}(b_{n-j}^j) \hookrightarrow \pi_{j+1}\ST{n-j-1}{j}$.
\end{proposition}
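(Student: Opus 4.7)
The plan is to extract the appropriate five-term portion of \Cref{LES-key} and analyze the connecting differential $d_{n-j}^j$ in each parity of $j$.

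Since \Cref{cor:connectivity} gives $\pia_j\ST{n-j-1}{n}=0$ (the target space is $j$-connected), the sequence \Cref{LES-key} truncates to
\[ \pia_{j+1}(\A^{j+1}\setminus 0) \xrightarrow{\pia_{j+1}a_{n-j}^j} \pia_{j+1}\ST{n-j}{n} \xrightarrow{\pia_{j+1}b_{n-j}^j} \pia_{j+1}\ST{n-j-1}{n} \xrightarrow{d_{n-j}^j} \pia_j(\A^{j+1}\setminus 0) \to \pia_j\ST{n-j}{n} \to 0.\]
Applying \Cref{cor:general-first-sheaf-compuation} (with indices $j$ and $j+1$ respectively) identifies $\pia_j\ST{n-j}{n}$ as $\KMW_{j+1}$ for $j$ even and $\KM_{j+1}$ for $j$ odd, while $\pia_{j+1}\ST{n-j-1}{n}$ is $\KM_{j+2}$ for $j$ even and $\KMW_{j+2}$ for $j$ odd. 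Exactness at $\pia_j(\A^{j+1}\setminus 0)=\KMW_{j+1}$ then pins down the image of $d_{n-j}^j$: in the even case it must be zero, while in the odd case it must equal $I^{j+2}\subset \KMW_{j+1}$, namely the kernel of the projection $\KMW_{j+1}\twoheadrightarrow \KM_{j+1}$, equivalently the image of multiplication by $\eta$.

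In the even case this already finishes the argument: $d_{n-j}^j=0$ makes $\pia_{j+1}b_{n-j}^j$ surjective, giving the first exact sequence. In the odd case, knowing only $\operatorname{Im}(d_{n-j}^j)$ is insufficient to compute its kernel, so the crux is to identify $d_{n-j}^j$ itself with $\eta$ up to an automorphism of $\KMW_{j+2}$. I would proceed by induction on $n\geq j+2$. The base case $n=j+2$ is precisely \Cref{ES3} combined with \Cref{defd} in the proof of \Cref{pirr2}, which cites \cite[Lemma 3.5]{AF14}. The inductive step uses the commutative square of connecting homomorphisms induced by \Cref{diag:abnotation-compare}, whose right vertical is the identity on $\pia_j(\A^{j+1}\setminus 0)$ and whose left vertical is the stabilization $\pia_{j+1}\ST{n-j-1}{n}\to \pia_{j+1}\ST{n-j}{n+1}$; this stabilization is an isomorphism for $n\geq j+3$ by \Cref{cor:stable-range}. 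The bridge case $n=j+2$ is handled by observing that this stabilization coincides with $\pia_{j+1}a_2^{j+1}$ (both maps are induced by the inclusion $\GL_{j+2}\hookrightarrow \GL_{j+3}$), which \Cref{pirr2} shows is an isomorphism for $j$ odd. Transporting the base-case identification forward along these isomorphisms gives $d_{n-j}^j=\eta$ for all relevant $n$.

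Finally, I would compute $\ker(\eta\: \KMW_{j+2}\to\KMW_{j+1})$ using the pullback presentation $\KMW_n\cong I^n\times_{I^n/I^{n+1}}\KM_n$: multiplication by $\eta$ acts as the inclusion $I^{j+2}\hookrightarrow I^{j+1}$ on the Witt factor and as zero on the Milnor factor, so its kernel is the subsheaf $\{(0,y):y\in 2\KM_{j+2}\}$, canonically isomorphic to $2\KM_{j+2}$. Combining gives $\operatorname{Im}(\pia_{j+1}b_{n-j}^j) = \ker(d_{n-j}^j) \cong 2\KM_{j+2}$, which is exactly the asserted second exact sequence. The main obstacle is the $n=j+2$ edge of the induction: the generic stable-range bounds guarantee only an epimorphism, so one must recognize the stabilization explicitly as $a_2^{j+1}$ and invoke the refined computation in \Cref{pirr2} to upgrade this to an isomorphism.
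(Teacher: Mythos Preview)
Your proof is correct and follows essentially the same approach as the paper's: both extract the relevant portion of \Cref{LES-key}, use \Cref{cor:general-first-sheaf-compuation} to dispatch the even case and to identify $\pia_{j+1}\ST{n-j-1}{n}\cong\KMW_{j+2}$ in the odd case, and then reduce to the base identification $d_2^j=\eta$ from \Cref{pirr2}. The only difference is that the paper asserts $d_{n-j}^j\cong\eta$ in one line (implicitly via the single naturality square coming from the isomorphism $a_{n-j-1}^{j+1}$ of \Cref{cor:general-first-sheaf-compuation}), whereas you spell this out as a step-by-step induction on $n$ through \Cref{diag:abnotation-compare}, correctly handling the edge case $n=j+2$ by recognizing the stabilization as $a_2^{j+1}$ and invoking \Cref{pirr2}; your added computation of $\ker\eta\cong 2\KM_{j+2}$ via the pullback presentation of $\KMW_*$ is likewise something the paper leaves implicit.
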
 
\begin{proof} Consider \Cref{LES-key}. By \Cref{cor:general-first-sheaf-compuation}, 
for $j$ even $\pia_{j}a_{n-j}^j$ is an isomorphism and $d_{n-j}^j$ factors through zero. 
For $j$ odd, $\pia_{j+1}\ST{n-(j+1)}{n} \cong \KMW_{j+2}$ and $d_{n-j}^j \cong \eta$ so that \[\operatorname{Ker}(d_{n-j}^j) =\operatorname{Im}(\pia_{j+1}b_{n-j}^j)\cong 2\KM_{j+2}.\]
\end{proof}
We now consider homotopy sheaves of symplectic Stiefel varieties.
Since \[\pia_{2j+1}\STsp{2}{2j+2} \cong \pia_{2j+1}(\mathbb A^{2j+2}\setminus 0)\cong \KMW_{2j+2},\] we apply \Cref{cor:stable-range-first-homotopy} to conclude:
\begin{corollary}\label{cor:first-symp-homotopy} For any $n> j \geq 1$, $\pia_{i}\STsp{2n-2j}{2n}=0$ for $i<2j+1$ and $\pia_{2j+1}\STsp{2n-2j}{2n}\cong \KMW_{2j+2}.$ \end{corollary}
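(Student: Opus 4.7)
The plan is to reduce to the base case $n = j+1$ via the stabilization result \Cref{cor:stable-range-first-homotopy}, and then observe that in that base case the symplectic Stiefel variety admits an explicit simple description to which Morel's computation applies.

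First, I would handle the base case $n - j = 1$. Here the symplectic Stiefel variety is $\STsp{2}{2j+2} = \Sp_{2j+2}/\Sp_{2j}$, and, as already noted in the excerpt just before the statement, this is $\mathbb{A}^1$-equivalent to $\A^{2j+2}\setminus 0$. By Morel's computation of the homotopy sheaves of punctured affine spaces, one has $\pia_i(\A^{2j+2}\setminus 0) = 0$ for $i < 2j+1$ (from the $(2j)$-connectedness of $\A^{2j+2}\setminus 0$, which also follows from \Cref{cor:connectivity-sp}) and $\pia_{2j+1}(\A^{2j+2}\setminus 0) \cong \KMW_{2j+1}$.

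Next, I would propagate upward in $n$ using the stabilization map $\STsp{2n-2j}{2n} \to \STsp{2n+2-2j}{2n+2}$ from \Cref{eq:varyN}. By \Cref{cor:stable-range-first-homotopy}(1), this stabilization map induces an isomorphism on $\pia_i$ for all $i \leq 2j+1$. Composing the stabilization maps from the base case $n = j+1$ to an arbitrary $n > j$ therefore yields isomorphisms
\[
\pia_i\STsp{2n-2j}{2n} \;\cong\; \pia_i\STsp{2}{2j+2}
\]
for every $i \leq 2j+1$, which combined with the base-case computation gives both claims of the corollary.

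There is no serious obstacle here: the statement is essentially the package-up of the stabilization corollary applied to a transparent base case. The only small points to verify are the identification $\Sp_{2j+2}/\Sp_{2j} \simeq \A^{2j+2}\setminus 0$ (standard, being the symplectic analogue of $\GL_{n+1}/\GL_n \simeq \A^{n+1}\setminus 0$ used throughout the section) and the fact that the isomorphism range $i \leq 2j+1$ of \Cref{cor:stable-range-first-homotopy}(1) is precisely wide enough to cover the homotopy sheaves in the statement.
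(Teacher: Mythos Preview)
Your proposal is correct and follows exactly the paper's own argument: reduce to the base case $n=j+1$ via \Cref{cor:stable-range-first-homotopy}, identify $\STsp{2}{2j+2}\simeq \A^{2j+2}\setminus 0$, and apply Morel's computation. One minor slip (which you inherit from the paper's statement itself): Morel actually gives $\pia_{2j+1}(\A^{2j+2}\setminus 0)\cong \KMW_{2j+2}$, not $\KMW_{2j+1}$, as is confirmed by the index $\KMW_{2j+4}$ appearing in \Cref{prop:symplectic-2jp4}.
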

We now consider the second nontrivial homotopy sheaf.
\begin{proposition}\label{prop:symplectic-2jp4} For each $n>j+1$ and $j\geq 1$, there is an exact sequence 
\[\KMW_{2j+4} \to \pia_{2j+2}(\A^{2j+2}\setminus 0) \to \pia_{2j+2}\STsp{2n-2j}{2n} \to 0.\]
\end{proposition}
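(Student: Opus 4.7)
The plan is to derive the claimed exact sequence directly from the long exact sequence on motivic homotopy sheaves associated with the fiber sequence \Cref{seq:change_quotient}. For the parameters appearing in the proposition, that fiber sequence reads
\[ \A^{2j+2}\setminus 0 \longrightarrow \STsp{2n-2j}{2n} \longrightarrow \STsp{2n-2j-2}{2n}, \]
and the four-term segment of the induced long exact sequence of interest is
\[ \pia_{2j+3}\STsp{2n-2j-2}{2n} \longrightarrow \pia_{2j+2}(\A^{2j+2}\setminus 0) \longrightarrow \pia_{2j+2}\STsp{2n-2j}{2n} \longrightarrow \pia_{2j+2}\STsp{2n-2j-2}{2n}. \]
The task reduces to evaluating the two outer Stiefel homotopy sheaves.

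For this, I would rewrite $\STsp{2n-2j-2}{2n} = \STsp{2n-2(j+1)}{2n}$ and apply \Cref{cor:first-symp-homotopy} with $j+1$ substituted for $j$. The proposition's hypothesis $n > j+1$ is precisely the stable-range hypothesis of that corollary after the shift. The corollary then forces $\pia_{2j+2}\STsp{2n-2j-2}{2n} = 0$, because the degree $2j+2$ falls strictly below the connectivity bound $2(j+1)+1$, and identifies $\pia_{2j+3}\STsp{2n-2j-2}{2n}$ with the appropriate Milnor--Witt $K$-theory sheaf, namely $\KMW_{2j+4}$. Substituting these into the four-term sequence yields an epimorphism $\pia_{2j+2}(\A^{2j+2}\setminus 0) \twoheadrightarrow \pia_{2j+2}\STsp{2n-2j}{2n}$ whose kernel is the image of a map from $\KMW_{2j+4}$, which is exactly the statement to be proved.

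I do not expect any serious obstacle: the argument is essentially bookkeeping on top of \Cref{seq:change_quotient} and \Cref{cor:first-symp-homotopy}, both of which are already available. The one place where care is needed is the index shift when invoking \Cref{cor:first-symp-homotopy} with the parameter $j+1$, and in particular checking that the hypothesis $n > j+1$ of the proposition aligns with the stable-range hypothesis of the corollary after that shift. No analysis of the connecting map itself is required, since the conclusion only asserts the existence of an exact sequence.
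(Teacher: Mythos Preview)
Your proposal is correct and follows essentially the same approach as the paper: both arguments invoke the fiber sequence \Cref{seq:change_quotient}, pass to the long exact sequence in homotopy sheaves, and identify the outer terms using the connectivity and first-homotopy-sheaf computation for symplectic Stiefel varieties (\Cref{cor:first-symp-homotopy}, which the paper cites alongside \Cref{lem:cellularity2}). Your explicit discussion of the index shift $j \mapsto j+1$ and the role of the hypothesis $n > j+1$ is a welcome clarification of a step the paper leaves implicit.
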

\begin{proof} This follows from the $\A^1$-fiber sequence $\A^{2j+2}\setminus 0 \to \STsp{2n}{2n-2j} \to \STsp{2n}{2n-2j-2}$ from \Cref{prop:symplectic_fiber_seqs} and its long exact sequence of homotopy sheaves, using  \Cref{lem:cellularity2} and \Cref{cor:first-symp-homotopy} to identify terms.
\end{proof}

\subsection{Additional exact sequences involving $\pia_{j+1}\ST{n-j}{n}$}\label{sebsec:seq-add} In this section, we give some more detailed results on exact sequences involving $\pia_{j+1}\ST{n-j}{n}$, strengthening \Cref{prop:necessary-computation}.
\begin{lemma}\label{pir1GLr2} 
When $j$ is even, \[\pia_{j+1}\ST{2}{j+2} \simeq \KMW_{j+2} \oplus \pia_{j+1}(\A^{j+1} \setminus  0).\] When $j$ is odd, there is an exact sequence
\begin{equation}\label[diagram]{2KMWend}\pia_{j+2}(\A^{j+2}\setminus 0 )\xrightarrow{\partial_2^j} \pia_{j+1} (\A^{j+1}\setminus 0)\xrightarrow{\pi_{j+1}a_{2}^j} \pia_{j+1}\ST{2}{j+2} \to 2\KM_{j+2}\to 0.\end{equation}
\end{lemma}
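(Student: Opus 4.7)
The plan is to extend the long exact sequence of \Cref{LES-key} with $n=j+2$ one further term to the left by incorporating $\pia_{j+2}\ST{1}{j+2}$, using the fiber sequence
\[ \A^{j+1}\setminus 0 \xrightarrow{a_2^j} \ST{2}{j+2} \xrightarrow{b_2^j} \ST{1}{j+2}\simeq \A^{j+2}\setminus 0 \]
of \Cref{diag:abnotation}. Using the identification $\pia_{j+1}(\A^{j+2}\setminus 0) \cong \KMW_{j+2}$ (and likewise $\pia_j(\A^{j+1}\setminus 0)\cong \KMW_{j+1}$), this yields
\[ \pia_{j+2}(\A^{j+2}\setminus 0) \xrightarrow{\partial_2^j} \pia_{j+1}(\A^{j+1}\setminus 0) \xrightarrow{\pi_{j+1}a_2^j} \pia_{j+1}\ST{2}{j+2} \xrightarrow{\pi_{j+1}b_2^j} \KMW_{j+2} \xrightarrow{d_2^j} \KMW_{j+1}, \]
where by the proof of \Cref{pirr2} we have $d_2^j = \eta$ up to automorphism when $j$ is odd, and $d_2^j = 0$ when $j$ is even.

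For $j$ odd, the image of $\pi_{j+1}b_2^j$ equals $\ker(\eta)$, which is identified with $2\KM_{j+2}$ from the structure of Milnor--Witt $K$-theory. Combined with exactness at $\pia_{j+1}\ST{2}{j+2}$, the long exact sequence immediately becomes the stated sequence \Cref{2KMWend}.

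For $j$ even, the vanishing $d_2^j = 0$ shows $\pi_{j+1}b_2^j$ is surjective onto $\KMW_{j+2}$, and it remains to show that $\partial_2^j = 0$ and that the resulting short exact sequence splits. I obtain both at once by constructing an $\A^1$-homotopy section of $b_2^j$ as follows. When $j+2$ is even, the inclusion $\Sp_{j+2}\hookrightarrow \GL_{j+2}$ and $\Sp_j\hookrightarrow \GL_j$ induce a map $\STsp{2}{j+2} = \Sp_{j+2}/\Sp_j \to \GL_{j+2}/\GL_j = \ST{2}{j+2}$ fitting into a commutative square
\[
\begin{tikzcd}
\STsp{2}{j+2} \ar[r] \ar[d] & \ST{2}{j+2} \ar[d,"b_2^j"] \\
\A^{j+2}\setminus 0 \ar[r,equal] & \A^{j+2}\setminus 0
\end{tikzcd}
\]
in which both vertical maps take a 2-frame to its first vector. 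The left vertical map is an $\A^1$-equivalence via the standard identification $\STsp{2}{j+2}\simeq \A^{j+2}\setminus 0$, which is implicit in \Cref{prop:symplectic_fiber_seqs} upon specializing to $\STsp{0}{j+2} = *$. Inverting it in $\mathcal{H}(k)$ yields a section $\sigma\: \A^{j+2}\setminus 0 \to \ST{2}{j+2}$ of $b_2^j$; the induced $\pi_{j+1}\sigma$ splits $\pi_{j+1}b_2^j$, and the existence of $\sigma$ forces $\partial_2^j = 0$. Combined with exactness, this produces the desired direct sum decomposition.

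The principal conceptual point is that the symplectic structure, available precisely when $j+2$ is even, furnishes the ``algebraic parallelization'' responsible for the splitting. The main verification required is the commutativity of the square displayed above, which is immediate from the fact that both vertical maps arise from the common orbit map $g\mapsto g\cdot e_1$; everything else is formal manipulation of the long exact sequence.
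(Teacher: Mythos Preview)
Your proof is correct and follows essentially the same strategy as the paper: for $j$ odd you both read off the exact sequence from \Cref{LES-key} using $d_2^j=\eta$ and $\ker(\eta)=2\KM_{j+2}$, and for $j$ even you both split $b_2^j$ via the symplectic Stiefel variety $\STsp{2}{j+2}\to \ST{2}{j+2}$. The only difference is presentational: the paper cites \cite[Lemma~4.2.1]{AF-splitting} for the fact that $b_2^j\circ\iota$ is an equivalence, whereas you unpack this via the commutative square and the identification $\STsp{2}{j+2}\simeq \A^{j+2}\setminus 0$ (which the paper records later as \Cref{eq:Sp_identification}).
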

\begin{proof} 
First, suppose that $j$ is even. The map $b_2^j\: \ST{2}{j+2} \to \ST{1}{j+2}$ 
has a section given by
the natural map $\iota\: \STsp{2}{j+2}\to \ST{2}{j+2}$. Indeed, by the discussion before \cite[Lemma 4.2.1]{AF-splitting}, the composite $b_{2}^j \circ \iota$ is an isomorphism. Thus \Cref{LES-key} with $n=j+2$ breaks into split short exact sequences, and we have a split short exact sequence
\[ 0 \to \pia_{j+1}(\A^{j+1}\setminus 0) \to \pia_{j+1}\ST{2}{j+2} \to \KMW_{j+2} \to 0.\] 
When $j$ is odd, we cannot appeal to symplectic groups for a splitting. Consider again \Cref{LES-key} with $n=j+2$:
\[ \pia_{j+1}\ST{2}{j+2} \xrightarrow{\pi_{j+1}b_{2}^j} \pia_{j+1}\ST{1}{j+2} \xrightarrow{d_2^j} \pia_j(\A^{j+1} \setminus 0),\]
We have already shown that $d_2^j$ can be identified with $\eta$ when $j$ is odd. Since $\operatorname{ker}\{\eta\} = 2\KM_{j+2}$, we obtain the stated exact sequence.
\end{proof}

\begin{lemma}\label{pir1GLr3}  Let $j\geq1$. If $j$ is even, we have an exact sequence
\[2\KM_{j+3} \to \pia_{j+1}(\A^{j+1}\setminus 0) \to \pia_{j+1}\ST{3}{j+3}\to \KM_{j+2} \to 0.\]

If $j$ is odd, we have an exact sequence
\[\KMW_{j+3} \oplus \pia_{j+2}(\A^{j+2}\setminus 0)\to \pia_{j+1}(\A^{j+1}\setminus 0) \to 
\pia_{j+1}\ST{3}{j+3}\to 
2\KM_{j+2}\to 0.\]
\end{lemma}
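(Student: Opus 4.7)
The plan is to apply the long exact sequence on homotopy sheaves to the fiber sequence
\[\A^{j+1}\setminus 0 \xrightarrow{a_3^j} \ST{3}{j+3} \xrightarrow{b_3^j} \ST{2}{j+3}\]
(the instance of \Cref{fiber2} at $n=j+3$; see \Cref{diag:abnotation} for notation), and then to identify the relevant terms and connecting maps using the preceding results. The right-hand end of each target sequence is already handled by \Cref{prop:necessary-computation} applied at $n=j+3$: for $j$ even, $\pi_{j+1}b_3^j$ surjects onto $\pia_{j+1}\ST{2}{j+3}$, and \Cref{cor:general-first-sheaf-compuation} (with $j$ replaced by $j+1$) identifies this as $\KM_{j+2}$; for $j$ odd, the image of $\pi_{j+1}b_3^j$ is $2\KM_{j+2}$. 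So it remains to analyze the kernel of $\pi_{j+1}a_3^j$, which by exactness equals the image of the connecting map $\partial_3^j\colon\pia_{j+2}\ST{2}{j+3}\to\pia_{j+1}(\A^{j+1}\setminus 0)$.

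The homotopy sheaf $\pia_{j+2}\ST{2}{j+3}$ is described by \Cref{pir1GLr2} applied with $j$ replaced by $j+1$. When $j$ is odd, $j+1$ is even and the lemma yields a direct sum decomposition $\pia_{j+2}\ST{2}{j+3}\cong \KMW_{j+3}\oplus \pia_{j+2}(\A^{j+2}\setminus 0)$; substituting this into the long exact sequence produces the sequence claimed in the odd case immediately. When $j$ is even, $j+1$ is odd and \Cref{pir1GLr2} instead exhibits $2\KM_{j+3}$ as the quotient of $\pia_{j+2}\ST{2}{j+3}$ by the image of $\pia_{j+2}(\A^{j+2}\setminus 0)\xrightarrow{\pi_{j+2}a_2^{j+1}}\pia_{j+2}\ST{2}{j+3}$. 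To obtain the claimed sequence it therefore suffices to show that $\partial_3^j$ annihilates this image, so that $\partial_3^j$ factors through $2\KM_{j+3}$.

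For this factorization, I apply naturality of the connecting map to the commutative diagram of fiber sequences
\[
\begin{tikzcd}[column sep=2em, row sep=1.2em]
\A^{j+1}\setminus 0 \ar[r,"a_2^j"] \ar[d,equal]& \ST{2}{j+2} \ar[r,"b_2^j"] \ar[d]& \A^{j+2}\setminus 0 \ar[d,"a_2^{j+1}"]\\
\A^{j+1}\setminus 0 \ar[r,"a_3^j"]&\ST{3}{j+3} \ar[r,"b_3^j"]& \ST{2}{j+3}
\end{tikzcd}
\]
induced by the inclusions $\GL_j\subset\GL_{j+1}\subset\GL_{j+2}\subset\GL_{j+3}$; commutativity of the right square reduces to the observation that, for $A\in\GL_{j+2}$, both compositions send $[A]\in\GL_{j+2}/\GL_j$ to $[A]\in\GL_{j+3}/\GL_{j+1}$. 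Naturality at degree $j+2\to j+1$ gives $\partial_3^j\circ\pi_{j+2}a_2^{j+1}=\partial_2^j$, where $\partial_2^j$ is the connecting for the top row. For $j$ even, the symplectic section $\STsp{2}{j+2}\to\ST{2}{j+2}$ of $b_2^j$ used in the proof of \Cref{pir1GLr2} splits the long exact sequence of the top row and thus forces $\partial_2^j=0$, completing the factorization. The main subtlety is the verification that the right square of the displayed diagram commutes, so that naturality applies; beyond that, the argument is a direct assembly of the cited lemmas.
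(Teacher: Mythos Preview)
Your proof is correct and follows the same overall strategy as the paper's: both start from the long exact sequence of the fiber sequence $\A^{j+1}\setminus 0 \to \ST{3}{j+3}\to \ST{2}{j+3}$, identify the cokernel side using \Cref{prop:necessary-computation} and \Cref{cor:general-first-sheaf-compuation}, and identify the domain $\pia_{j+2}\ST{2}{j+3}$ via \Cref{pir1GLr2} (with $j$ replaced by $j+1$). The only genuine difference is in the $j$ even step showing $\partial_3^j\circ\pi_{j+2}a_2^{j+1}=0$. The paper builds a direct map $c\colon \STsp{2}{j+2}\to \ST{3}{j+3}$ whose composite with $b_3^j$ is $a_2^{j+1}$ under the identification $\STsp{2}{j+2}\simeq \A^{j+2}\setminus 0$, so that the induced map on fibers is $*\to \A^{j+1}\setminus 0$ and the composite connecting map factors through $\pia_{j+1}(*)=0$. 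You instead use naturality for the morphism of fiber sequences induced by $\GL_{j+2}\hookrightarrow\GL_{j+3}$ to reduce $\partial_3^j\circ\pi_{j+2}a_2^{j+1}$ to the connecting map $\partial_2^j$ of the top row, and then invoke the symplectic section of $b_2^j$ from \Cref{pir1GLr2} to kill $\partial_2^j$. Both routes hinge on the same symplectic input; yours is slightly more systematic (it stays within the family of fiber sequences \Cref{fiber2} and reuses the splitting already established), while the paper's is a touch more direct since it produces the factorization through a point in one step.
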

\begin{proof}
We take the long exact sequence associated with \Cref{fiber2} for $n=j+3$:

\begin{equation}\label[sequence]{Lemma6_9}\begin{tikzcd}[row sep=1.1em]
\pia_{j+2}\ST{2}{j+3} \ar[r,"\partial_3^j"] & \pia_{j+1}(\A^{j+1}\setminus 0) \ar[dl]\\
\pia_{j+1}\ST{3}{j+3}\ar[r]&
\pia_{j+1}\ST{2}{j+3}\ar[r,"{d_3^j}"] & \pia_j(\A^{j+1}\setminus 0),\end{tikzcd}\end{equation}
where $d_3^j$ and $\partial_3^j$ are the as in \Cref{LES-key}. 

First, consider the kernel of the map $d_3^j$. We have a commutative diagram:
\begin{equation}\label[sequence]{Lemma6_9}\begin{tikzcd}
\pia_{j+1}\ST{3}{j+3} \ar[r]&
\pia_{j+1} \ST{2}{j+3} \ar[r,"{d_3^j}"] & \pia_j(\A^{j+1}\setminus 0)\\
 \pia_{j+1}\ST{2}{j+2} \ar[r]\ar[u]& \pia_{j+1}\ST{1}{j+2}\ar[r,"d_2^j"]\ar[u]&\pia_j(\A^{j+1} \setminus 0) \ar[u,"\simeq"]\end{tikzcd}\end{equation}
where $d_2^j$ is again from \Cref{LES-key}.
By \Cref{cor:stable-range}, the middle vertical map is surjective. By \Cref{defd}, we see that $d_3^j$ is zero when $j$ is even. By \Cref{cor:general-first-sheaf-compuation} we get a an exact sequence $\pia_{j+1}\ST{3}{j+3} \to \KM_{j+2} \to 0.$ For $j$ odd, the middle vertical map is an isomorphism. By \Cref{defd}, $d_2^j$ is multiplication by $\eta$ and we obtain an exact sequence 
\[\pia_{j+1}\ST{3}{j+3} \to 
2\KM_{j+2}\to 0.\]

We now study $\partial_3^j$. For $j$ even, 
 consider the diagram
\begin{equation}\label[diagram]{twoseq}
\begin{tikzcd}
\pia_{j+2}(\A^{j+2} \setminus 0)\ar[d,"\pi_{j+2}a_2^{j+1}"]\\
 \pia_{j+2}\ST{2}{j+3} \ar[r,"\partial_{3}^j"] \ar[d]& \pia_{j+1}(\A^{j+1}\setminus 0) \\
2\KM_{j+3}\ar[d]\\
0,
\end{tikzcd}
\end{equation}
with the vertical exact sequence as in \Cref{2KMWend}. We claim that the composite $\partial_3^j \circ \pi_{j+2}a_2^{j+1}=0$, which completes the lemma in the case $j$ even. To prove the claim, recall that 
\begin{equation}\label[empty]{eq:Sp_identification}\STsp{2}{j+2} = \Sp_{j+2}/\Sp_{j} \simeq \GL_{j+2}/\GL_{j+1}\simeq \A^{j+2}\setminus 0.\end{equation}  

Under this identification, we see that we have a commutative diagram:
\begin{equation}\label[diagram]{blh}
\begin{tikzcd}
\STsp{2}{j+2} \ar[r,"\simeq"]\ar[d,"c"]&\A^{j+2} \setminus 0\ar[d,"a_2^{j+1}"]\\
\ST{3}{j+3}\ar[r] &\ST{2}{j+3},
\end{tikzcd}
\end{equation}
where $c$ is induced by the natural map $\Sp_{j+2} \to \GL_{j+2} \to \GL_{j+3}$. The induced map on fibers gives a diagram
\begin{equation}\label[diagram]{blh2}
\begin{tikzcd}
* \ar[d]\ar[r]&\STsp{j+2}{2} \ar[r,"\simeq"]\ar[d,"c"]&\A^{j+2} \setminus 0\ar[d,"a_2^{j+1}"]\\
\A^{j+1} \setminus 0 \ar[r]&\ST{3}{j+3}\ar[r] &\ST{2}{j+3}
\end{tikzcd}
\end{equation}
The induced commutative diagram on long exact sequences of homotopy sheaves includes a portion:

\begin{equation}\label[diagram]{blh2}
\begin{tikzcd}
\piA_{j+2}(\A^{j+2} \setminus 0)\ar[d,"\pi_{j+2}a_2^{j+1}"]\ar[r] &\piA_{j+1}(*)\simeq 0 \ar[d]\\
\piA_{j+2}\ST{2}{j+3}\ar[r,"\partial_3^{j}"]& \piA_{j+1}(\A^{j+1}\setminus 0),
\end{tikzcd}
\end{equation}
show that $\partial_3^j\circ \pi_{j+2}a_2^{j+1}$ factors through zero.

For  $j$ odd,  \Cref{pir1GLr2} tells us that \[\pia_{j+2}\ST{2}{j+3}\simeq \KMW_{j+3} \oplus \pia_{j+2}(\A^{j+2}\setminus 0),\] which gives the result.
\end{proof}

\begin{remark}
It is possible to identify some of the morphisms in the exact sequences above with differentials in the linear spectral sequence considered in \cite[Section 2.1]{AF4} or with differentials in the symplectic spectral sequence considered in \cite[Section 2.2]{AF4}. We summarize these identifications here:
\begin{itemize}
\item For $j$ odd, the morphism $\partial_2^j$ as in \Cref{2KMWend} is the differential $d_{j+2,0}^{1}$ in the linear spectral sequence.
\item For $j$ even, the morphism \[2\KM_{j+3} \rightarrow \pia_{j+1}(\mathbb{A}^{j+1}\setminus 0)\] in \Cref{pir1GLr3} is the composite \[2\KM_{j+3} \xrightarrow{d_{j+3,-1}^{2}} E_{j+1,0}^{2} \hookrightarrow \pia_{j+1}(\mathbb{A}^{j+1}\setminus 0),\] where $d_{j+3,-1}^{2}$ is the indicated differential in the linear spectral sequence.
\item For $j$ odd, the morphism \[\pia_{j+2}(\mathbb{A}^{j+2}\setminus 0) \rightarrow \pia_{j+1}(\mathbb{A}^{j+1}\setminus 0)\] in \Cref{pir1GLr3} is the differential $d_{j+2,0}^{1}$ in the linear spectral sequence.
\item If $j$ is odd, the morphism \[\KMW_{j+3} \rightarrow \pia_{j+1}(\mathbb{A}^{j+1}\setminus 0)\] in \Cref{pir1GLr3} is the differential $d^{1}_{\frac{j+3}{2},\frac{j+3}{2}-1}$ in the symplectic spectral sequence.
\item The morphism \[\KMW_{2j+4} \rightarrow \pia_{2j+2}(\mathbb{A}^{2j+2} \setminus 0)\] in \Cref{prop:symplectic-2jp4} is the differential $d^{1}_{j+2,j+1}$ in the symplectic spectral sequence.
\end{itemize}
\end{remark}

\section{Motivic obstruction theory and efficient generation}\label{total-obstructions}

Let $k$ be an algebraically closed field and $X=\spec R$ a smooth affine $k$-variety of dimension $d$. Murthy's celebrated work implies that the $d$-th Segre class of a projective module $M$ of rank $r$ over  $X$ is the only obstruction to generating $M$ by $r+d-1$ elements \cite{Mu}. If we additionally assume that $k$ has characteristic zero, Murthy's splitting conjecture in characteristic zero implies the following:
\begin{proposition}\label{char-zero-segre-vanish-rd2}Let $k$ be an algebraically closed field of characteristic zero. Let $X=\spec R$ be a smooth affine variety of dimension $d\geq 2$ over $k$. Let $M$ be a projective module of rank $r \geq 1$ over $R.$ Then $M$ can be generated by $r+d-2$ elements if and only if the $d$-th and $(d-1)$-st Segre classes of $M$ vanish.
\end{proposition}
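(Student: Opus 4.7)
The plan is to translate the generation question into a splitting question for projective modules and then apply Murthy's splitting conjecture in its characteristic-zero form. Recall that $M$ is generated by $n$ elements if and only if there exists a projective module $Q$ of rank $n-r$ with $M \oplus Q \cong R^n$; the question therefore becomes whether $M$ admits a complement of rank $d-2$ inside a free module of rank $r+d-2$.

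For the forward (necessity) direction, I would use the Whitney product formula. If $M \oplus Q \cong R^{r+d-2}$ with $Q$ of rank $d-2$, then $c(M) c(Q) = 1$, so $s(M) = c(Q)$ in the total Chow ring. Since a rank-$(d-2)$ bundle has vanishing Chern classes in degrees strictly greater than $d-2$, we immediately conclude $s_{d-1}(M) = s_d(M) = 0$.

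For the reverse (sufficiency) direction, I would proceed in two stages. Using the hypothesis $s_d(M) = 0$ together with Murthy's original Segre-class theorem (recovered in the algebraically closed case by \Cref{thm:B}), first produce a projective module $Q$ of rank $d-1$ with $M \oplus Q \cong R^{r+d-1}$. The identity $c(Q) = s(M)$ then yields $c_{d-1}(Q) = s_{d-1}(M) = 0$. Second, invoke Murthy's splitting conjecture in characteristic zero for a rank-$(d-1)$ bundle on a smooth affine $d$-fold over an algebraically closed field of characteristic zero: the vanishing of $c_{d-1}(Q)$ implies $Q$ splits off a free rank-one summand. Writing $Q \cong R \oplus Q'$ with $Q'$ of rank $d-2$ and cancelling then gives $M \oplus Q' \cong R^{r+d-2}$, so $M$ is generated by $r+d-2$ elements.

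The main obstacle lies in the sufficiency argument, specifically in the precise form of Murthy's splitting conjecture that one invokes. The classical statement concerns projective modules whose rank equals the dimension, whereas we need a splitting criterion for rank strictly less than the dimension. The relevant input, established in characteristic zero via the work of Asok--Fasel and Fasel, identifies the Euler class of a rank-$(d-1)$ bundle $Q$ on a smooth affine $d$-fold over an algebraically closed field of characteristic zero with (an appropriate avatar of) its top Chern class, and reduces splitting off a trivial line bundle to the vanishing of that Euler class. Verifying that this Euler-class criterion cleanly reduces in our setting to the vanishing of $c_{d-1}(Q) \in \CH^{d-1}(X)$, and checking that the twist by $\det Q^\vee$ does not obstruct this identification, is the technical heart of the argument.
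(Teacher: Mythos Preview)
Your approach is essentially the same as the paper's, with a minor reorganization: the paper starts from a rank-$d$ complement $Q$ (via Forster--Swan), uses \cite[Theorem~7.1.1]{ABH} to split $Q \cong Q' \oplus R^2$ in one stroke from $c_d(Q)=c_{d-1}(Q)=0$, and then cancels; you instead pass through a rank-$(d-1)$ complement first (via Murthy's generation theorem) and then split once. Both routes lead to the same endpoint $M \oplus Q' \oplus R^{\,\text{free}} \cong R^{\,\text{free}}$ with $Q'$ of rank $d-2$, and both rely on the characteristic-zero Murthy splitting conjecture as proved in \cite{ABH}.

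Where you misplace the emphasis is in identifying the ``main obstacle.'' The rank-$(d-1)$ splitting you worry about is squarely covered by \cite[Theorem~7.1.1]{ABH}, and the Chow--Witt twist is harmless over an algebraically closed field (e.g.\ by \cite[Proposition~5.2]{AF2}). The genuinely delicate step is the one you dispatch with the single word ``cancelling'': from $M \oplus Q' \oplus R \cong R^{r+d-1}$ you need $M \oplus Q' \cong R^{r+d-2}$, i.e.\ cancellation for a module of rank $r+d-2$ over a $d$-dimensional ring. For $r \geq 3$ this is Bass cancellation, but for $r=2$ one needs Suslin's cancellation theorem over algebraically closed fields \cite{suslin77a}, and for $r=1$ one needs Suslin's cancellation \emph{conjecture}, only recently established by Fasel \cite[Theorem~2]{fasel2021suslin}. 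The paper spells out exactly these three cases; you should do the same, since without the $r=1$ case your argument is incomplete for line bundles.
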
 
We refer the reader to \Cref{def:seg} for the background on the Segre class.
\begin{proof}[Proof of \Cref{char-zero-segre-vanish-rd2}]
$M$ can be generated by $r+d-2$ elements if and only if there is a rank $d-2$ projective module $Q$ over $R$ such that $M \oplus Q$ is free. If such an $Q$ exists, then its $d$-th and $(d-1)$-st Chern classes vanish. These are the $d$-th and $(d-1)$-st Segre class of $M$ by definition.

Conversely, suppose that the $d$-th and $(d-1)$-st Segre class of $M$ vanishes and that $Q$ is a rank $d$ projective module such that $M \oplus Q$ is free (such an $Q$ exists by the Forster--Swan theorem). By hypothesis, the $d$-th and $(d-1)$-st Chern classes of $Q$ vanish. By Murthy's conjecture in characteristic zero \cite[Theorem 7.1.1]{ABH}, $Q\simeq Q' \oplus R^2$ for some projective module $Q'$ of rank $d-2$. We need a cancellation result to conclude that $M\oplus Q'$ is free. For $r\geq 3$, this is Bass cancellation. For $r=2$, we appeal to Suslin's celebrated work \cite{suslin77a}. In the case $r=1$, we use Suslin's Cancellation conjecture as resolved by Fasel  \cite[Theorem 2]{fasel2021suslin}.
\end{proof}
The main project of this section is to explore an obstruction-theoretic approach to reducing the number of generators of a projective module over smooth $k$-algebras when $k$ is not necessarily algebraically closed and not necessarily of characteristic zero. In \Cref{subsec:absolute} we explain the relevant obstruction theory. In \Cref{subsec:classical-invariants}, we review some classical vector bundle invariants, including Segre and Euler classes. In \Cref{subsec:generation-line-bundles}, we study efficient generation for line bundles. In \Cref{subsec:first-obstr-classical} and \Cref{further-reduction}, we identify certain key obstructions with Segre or Euler classes and give conditions for rank $r$ vector bundles on a smooth affine variety of Nisnevich cohomological dimension at most $d$ to be $(r+d-1)$- or $(r+d-2)$-generated.

\subsection{Setting up obstruction theory for $\Gr{r}{n}\to \BGL_r$}\label{subsec:absolute}
Let $X=\spec R$ be a smooth affine variety of $\A^1$-cohomological dimension at most $d$ over a perfect field $k$ and let $ n\geq r+2$. We consider the Moore-Postnikov factorization of the morphism $\Gr{r}{n} \to \BGL_r$ representing the tautological rank $r$ bundle on $\Gr{r}{n}$. 
We follow \cite[Section 6.1]{AF-splitting} for the obstruction-theoretic set-up. By \Cref{lem:varyN}, this morphism fits into a fiber sequence
\[
\ST{r}{n} \rightarrow \Gr{r}{n} \rightarrow \BGL_{r},
\] so the obstruction groups will be cohomology groups with coefficients in homotopy sheaves of $\ST{r}{n}$.

Let $M\: X \rightarrow \BGL_{r}$ be a morphism representing a finitely generated projective $R$-module of rank $r$. Assuming one can lift $M$ to the $(i-1)$-st stage of the Moore--Postnikov factorization for the morphism $\Gr{r}{n} \to \BGL_r$, the obstruction to lifting $M$ to the $i$-th stage can be identified with an element

\begin{equation}\label{eq:obstructions-reducing}
o_{i,n,r}(M) \in H^{i+1} (X, \pi_{i} \ST{r}{n}(\det M)),
\end{equation}
which is only well-defined up to the choice of a lift of $M$ to the $(i-1)$-st stage. If $i \geq d$, then this obstruction vanishes automatically. 
By \Cref{lem:cellularity1}, the first potentially non-trivial obstruction to lifting $M$ to a map $\tilde{M}\:X \rightarrow \Gr{r}{n}$ is the element $o_{n-r,n,r}(M)$ in an $(n-r+1)$-st Nisnevich cohomology group of $X$.
Taking $n=r+d$, we find that all obstructions are identically zero. This completes our proof of a homotopy Forster--Swan theorem:

\begin{theorem}\label{htpy:Fo-Sw}
Let $X = \spec R$ be a smooth affine variety of $\A^1$-cohomological dimension at most $d \geq 2$ over a perfect field $k$. Any finitely generated projective $R$-module of rank $r$ can be generated by $n=r+d$ elements.
\end{theorem}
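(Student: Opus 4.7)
The plan is to recast $n$-generation of a rank $r$ projective $R$-module as a lifting problem and then run motivic Moore--Postnikov obstruction theory on the universal Stiefel fibration. By the affine representability statements recalled in \Cref{subsec:prelims}, such an $M$ is classified by a homotopy class $[M]\: X \to \BGL_r$, and $M$ can be generated by $n$ elements precisely when $[M]$ admits a lift along $\Gr{r}{n} \to \BGL_r$. By \Cref{prop:fib_stiefel_pf} the $\A^1$-homotopy fiber of this map is the Stiefel variety $\ST{r}{n}$, so the relevant obstructions will be controlled by the homotopy sheaves $\piA_i \ST{r}{n}$.

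My first step is to invoke the motivic Moore--Postnikov tower of $\Gr{r}{n} \to \BGL_r$, as developed in \cite[Appendix~B]{Morel} and \cite[\S 6.1]{AF-splitting}. Given a lift of $[M]$ through stage $i-1$, the obstruction to extending to stage $i$ is a well-defined class
\[
o_{i,n,r}(M) \in H^{i+1}\bigl(X,\piA_i \ST{r}{n}(\det M)\bigr),
\]
with the twist by $\det M$ coming from the $\GL_r$-action on the fiber. For $n = r+d$ with $d \geq 2$ the Stiefel variety will be at least $1$-connected (see below), so all relevant $\piA_i$ are strictly $\A^1$-invariant and these cohomology groups are honest receptacles for the obstructions.

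Next, I would apply the connectivity estimate of \Cref{cor:connectivity}, which gives $\piA_i \ST{r}{n} = 0$ for $i \leq n-r-1$, so that the first potentially non-zero obstruction sits in degree $n-r+1$. Specializing to $n = r+d$, every remaining obstruction lives in $H^m(X,\mathcal A)$ with $m \geq d+1$ and $\mathcal A$ a strictly $\A^1$-invariant sheaf; the $\A^1$-cohomological dimension hypothesis forces every such group to vanish. Hence every obstruction vanishes automatically, the lift $X \to \Gr{r}{r+d}$ exists, and $M$ is $(r+d)$-generated. The only moving parts are the connectivity bound and the strict $\A^1$-invariance of the relevant homotopy sheaves; once \Cref{prop:fib_stiefel_pf} and \Cref{cor:connectivity} are in hand the result is essentially immediate, and there is no genuinely hard step.
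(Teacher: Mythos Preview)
Your proposal is correct and follows essentially the same approach as the paper: translate $n$-generation into a lifting problem along $\Gr{r}{n}\to\BGL_r$, use the Moore--Postnikov tower with fiber $\ST{r}{n}$, and combine the connectivity estimate of \Cref{cor:connectivity} with the $\A^1$-cohomological dimension bound to kill every obstruction when $n=r+d$. The only cosmetic difference is that the paper cites \Cref{lem:cellularity1} for the connectivity input while you cite its consequence \Cref{cor:connectivity} directly.
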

The framework above gives some immediate answers for when a projective module can be generated by $r+d-1$ or $r+d-2$ elements:

\begin{lemma}
Let $X = \spec R$ be a smooth affine variety of $\A^1$-cohomological dimension at most $d \geq 3$ over a perfect field $k$ and let $M\: X \to \BGL_r$ represent a finitely generated projective $R$-module of rank $r$. Then $M$ is generated by $n=r+d-1$ elements if and only if $o_{d-1,n,r}(M) = 0 \in  H^{d} (X, \pia_{d-1} \ST{r}{n}(\det M))$. 
\end{lemma}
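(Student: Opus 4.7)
The plan is to apply the Moore--Postnikov obstruction theory recalled in \Cref{subsec:absolute} to the fiber sequence $\ST{r}{n} \to \Gr{r}{n} \to \BGL_r$ with $n = r+d-1$, and to show that under the stated hypotheses $o_{d-1,n,r}(M)$ is the only obstruction in the entire tower that can fail to vanish; the biconditional then follows from the standard obstruction-theoretic criterion and affine representability.

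First I would pin down the connectivity of the fiber. By \Cref{cor:connectivity} applied with $j = n-r = d-1$, the Stiefel variety $\ST{r}{r+d-1}$ is $(d-2)$-connected, so $\pia_i \ST{r}{n} = 0$ for all $i \leq d-2$. Every obstruction $o_{i,n,r}(M) \in H^{i+1}(X, \pia_i \ST{r}{n}(\det M))$ with $i \leq d-2$ therefore lives in a trivial group, and as a consequence $M$ admits an essentially canonical lift to the $(d-2)$-nd Moore--Postnikov stage. In particular the class $o_{d-1,n,r}(M) \in H^d(X,\pia_{d-1}\ST{r}{n}(\det M))$ is well-defined with no dependence on a choice of lift, resolving the mild well-definedness subtlety built into \eqref{eq:obstructions-reducing}.

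Next I would rule out all obstructions with $i \geq d$ using the cohomological dimension hypothesis. The relevant sheaves $\pia_i \ST{r}{n}(\det M)$ are strictly $\A^1$-invariant (for $i \geq 2$ the sheaves $\pia_i$ of a motivic space are strictly $\A^1$-invariant by \cite[\S6]{Morel}, and a line-bundle twist preserves this), so $H^{i+1}(X,\pia_i \ST{r}{n}(\det M)) = 0$ because $i+1 > d$ exceeds the $\A^1$-cohomological dimension of $X$.

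Combining the two previous steps, the only potentially nontrivial obstruction in the Moore--Postnikov tower of $\Gr{r}{n} \to \BGL_r$ evaluated at $M$ is $o_{d-1,n,r}(M)$. Hence $M\: X \to \BGL_r$ lifts along $\Gr{r}{n} \to \BGL_r$ if and only if $o_{d-1,n,r}(M) = 0$, and by the affine representability statements recalled in \Cref{subsec:prelims} such a lift is precisely the data of an $n$-element generating set for the projective module classified by $M$. There is no substantive obstacle here; the lemma is essentially a bookkeeping exercise combining the connectivity output of \Cref{subsec:stablerange} with the cohomological dimension cap.
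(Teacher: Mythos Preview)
Your argument is correct and matches the paper's approach: the paper does not give a separate proof for this lemma, treating it as immediate from the Moore--Postnikov setup in \Cref{subsec:absolute} together with the connectivity of $\ST{r}{n}$ (the paper phrases this via \Cref{lem:cellularity1}, you via its consequence \Cref{cor:connectivity}) and the cohomological-dimension bound. Your added remark that the lift to the $(d-2)$-nd stage is canonical, so that $o_{d-1,n,r}(M)$ is well-defined independent of choices, is a nice clarification the paper leaves implicit.
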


\begin{lemma}
Let $X = \spec R$ be a smooth affine variety of dimension $d \geq 4$ over a perfect field $k$. Suppose that, for any line bundle $\mathcal L$ on $X$,  $H^{d-1} (X, \pia_{d-2} \ST{r}{r+d-2}(\mathcal L))$ and $H^{d} (X, \pia_{d-1} \ST{r}{r+d-2}(\mathcal L))$ are trivial. Then all finitely generated projective $R$-modules of rank $r$ can be generated by $r+d-2$ elements. 
\end{lemma}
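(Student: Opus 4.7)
The plan is to run the Moore--Postnikov obstruction-theoretic machinery set up in \Cref{subsec:absolute} for the map $\Gr{r}{n}\to\BGL_r$ with $n = r+d-2$, and check that every obstruction in sight vanishes. Given a projective module $M$ of rank $r$, classified by a map $M\colon X\to\BGL_r$, lifting $M$ through this map is equivalent to expressing $M$ as an $n$-generated module. The obstruction to lifting $M$ from the $(i-1)$-st to the $i$-th stage of the Moore--Postnikov tower of $\Gr{r}{n}\to\BGL_r$ is, per \Cref{eq:obstructions-reducing}, a class
\[
o_{i,n,r}(M) \in H^{i+1}(X,\pia_i\ST{r}{n}(\det M)),
\]
well-defined up to the choice of lift to the $(i-1)$-st stage. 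So it suffices to show that all such groups vanish as $i$ varies.

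I would split the verification into three ranges. For $i\leq d-3$, \Cref{cor:connectivity} says $\ST{r}{n}$ is $(n-r-1)=(d-3)$-connected, so $\pia_i\ST{r}{n}=0$ and the obstruction group is trivially zero; thus $M$ lifts canonically to the $(d-3)$-rd stage. For $i=d-2$ and $i=d-1$, the relevant obstruction groups are exactly $H^{d-1}(X,\pia_{d-2}\ST{r}{r+d-2}(\det M))$ and $H^d(X,\pia_{d-1}\ST{r}{r+d-2}(\det M))$, both of which vanish by the hypothesis applied to $\mathcal L=\det M$; the vanishing is uniform in the choice of previous lift, so we successfully lift through the $(d-1)$-st stage regardless of choices made. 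Finally, for $i\geq d$, the obstruction lives in $H^{i+1}(X,-)$ with $i+1>d$; since $X$ has Krull dimension at most $d$, its $\A^1$-cohomological dimension is bounded by $d$, and because $\pia_i\ST{r}{n}(\det M)$ is strictly $\A^1$-invariant, all such cohomology groups vanish.

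Combining the three ranges, every obstruction in the Moore--Postnikov tower vanishes, so $M$ lifts all the way to a map $X\to\Gr{r}{n}$, which by affine representability means $M$ can be generated by $n=r+d-2$ elements. There is no real obstacle in this argument, as it is a direct bookkeeping application of the framework of \Cref{subsec:absolute}, the connectivity result \Cref{cor:connectivity}, and the hypothesized vanishing of the two delicate cohomology groups; the content of the lemma is precisely to isolate these two groups as the only nontrivial obstructions.
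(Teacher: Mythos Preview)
Your proposal is correct and matches the paper's approach exactly: the paper states this lemma without proof, presenting it as an ``immediate answer'' furnished by the Moore--Postnikov framework of \Cref{subsec:absolute}, and what you have written is precisely the spelling-out of that framework---connectivity of $\ST{r}{r+d-2}$ (\Cref{cor:connectivity}) kills the obstructions for $i\leq d-3$, the hypothesis handles $i=d-2,d-1$, and the bound on $\A^1$-cohomological dimension by Krull dimension disposes of $i\geq d$.
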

In the next section, we compare vanishing of first obstruction to efficient generation to vanishing of classical characteristic classes. We also study the secondary obstruction in the case that primary obstruction vanishes, and give more explicit conditions under which all rank $r$ projective modules on a smooth affine $k$-algebra of $\A^1$-cohomological dimension at most $d$ can be generated by $r+d-1$ or $r+d-2$ elements.

\subsection{Classical invariants}\label{subsec:classical-invariants}

 We now consider classical invariants that relate to obstruction theory for efficient generation of projective modules.
\begin{definition}\label{def:seg}Given a rank $j$ vector bundle $\xi$ on a smooth affine variety $X$ over a perfect field $k$, the {\em total Segre class} of $M$ is the inverse to the total Chern class in the Chow ring of $X$, and is written as $s(\xi)$.
The $i$-th Segre class $s_i(\xi)$ is the $i$-th graded piece of the total Segre class, which lies in $\CH^i(X)$, the Chow group of codimension $i$ cycles on $X$. \end{definition}
We follow \cite{AF5} for the next definition.
\begin{definition}\label{def:euler}The {\em Euler class} of a rank $j$ vector bundle $\xi$ on a smooth affine variety is the first obstruction to splitting a trivial bundle from $\xi$. 
The universal example is a class is $$\tilde{e}_r\in \CHW^j(\BGL_j,\det \gamma_j^{\vee}),$$
where $\gamma_j$ is the universal bundle on $\BGL_j$. Given a $\xi\: X \to \BGL_j$, \[e_r(\xi)=\xi^*(\tilde{e}_r) \in \CHW^j(X,\det \xi^{\vee}).\]
Given a smooth affine $k$-algebra $R$ and a projective module $M$ of rank $r$ over $R$ with associated vector bundle $\xi\: \spec R \to \BGL_r$, we define 
\[e_r(M)=e_r(\xi) \in \CHW^j(\spec R,\det \xi^{\vee} ).\] It is the first obstruction to splitting a copy of $R$ from $M$ as an $R$-module.
 \end{definition}
\begin{remark}\label{rmk:euler-lifts} 
The Euler class, as defined above, can be compared with numerous other constructions.  For oriented vector bundles on a smooth affine variety over a field $k$ having characteristic not equal to $2$, the Euler class as defined above, coincides with the Euler class in Chow-Witt theory, up to multiplication by a unit in the Grothendieck--Witt ring of $k$; this result is established in \cite[Theorem 1]{AF5}.  Additionally, there is a natural map $\CHW^j(\BGL_j, \operatorname{det} \gamma_j^\vee) \to \CH^j(\BGL_j)$ under which the Euler class maps to the usual top Chern class \cite[Proposition 5.8]{AF5}.
\end{remark}

\subsection{Efficient generation of line bundles and powers of the first Chern class}\label{subsec:generation-line-bundles}

Let $k$ be an algebraically closed field. In \cite[Corollary 3.16]{Mu}, Murthy shows that a rank $1$ module $L$ over a smooth affine $k$-algebra $R$ of dimension $d$ can be generated by $d$ elements if and only if the $d$-th power of the first Chern class of $L$ vanishes. In general, some hypothesis on $k$ will be necessary, but $k$ need not be algebraically closed and also need not satisfy other technical conditions listed in \cite[Theorem 1.8]{Mu}. 
\begin{theorem}\label{thm:line-bundle-generation} Let $X=\spec R$ be a smooth affine variety of dimension at most $d\geq 2$ over a field of $2$-cohomological dimension at most $1$. Let $L$ be a rank $1$ projective module over $R$. Then $L$ can be generated by $d$ elements if and only if the $d$-th power of the first Chern class of $L$ is zero in $\CH^d(X).$
\end{theorem}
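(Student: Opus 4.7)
The plan is to apply the motivic obstruction theory from \Cref{subsec:absolute} to the lifting problem with fiber sequence $\ST{1}{d} \to \Gr{1}{d} \to \BGL_1$. Since $\ST{1}{d}\cong \A^d\setminus 0$ is $(d-2)$-connected with $\pia_{d-1}(\A^d\setminus 0)\cong \KMW_d$, and since $X$ has dimension at most $d$, the unique potentially nontrivial obstruction in the Moore--Postnikov tower is
\[o_{d-1}(L)\in H^d(X,\KMW_d(L)),\]
and $L$ can be generated by $d$ elements if and only if $o_{d-1}(L)=0$.

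Next, I would compare $o_{d-1}(L)$ with $c_1(L)^d$. The canonical reduction $\KMW_d\to \KM_d$ has kernel $\mathbf{I}^{d+1}$ and induces a map
\[\beta\colon H^d(X,\KMW_d(L))\to H^d(X,\KM_d)=\CH^d(X).\]
By naturality of the Moore--Postnikov factorization applied to this morphism of strictly $\A^1$-invariant sheaves, $\beta(o_{d-1}(L))$ agrees with the analogous obstruction in the classical Chow-theoretic obstruction theory for the map $\mathbb{P}^{d-1}=\Gr{1}{d}\to \BGL_1$, which is precisely $c_1(L)^d$. This gives the ``only if'' direction immediately: if $L$ is $d$-generated then $o_{d-1}(L)=0$, so $c_1(L)^d=\beta(o_{d-1}(L))=0$.

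For the converse, the goal is to show that under the hypothesis that $k$ has $2$-cohomological dimension at most $1$, the map $\beta$ is injective, so that $c_1(L)^d=0$ forces $o_{d-1}(L)=0$. From the long exact sequence of $0\to \mathbf{I}^{d+1}(L)\to \KMW_d(L)\to \KM_d\to 0$, injectivity of $\beta$ reduces to the vanishing $H^d(X,\mathbf{I}^{d+1}(L))=0$. This is the crux of the proof and where I expect the main technical work to lie.

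To establish this vanishing, my plan is to use the Gersten resolution of $\mathbf{I}^{d+1}(L)$, whose bottom terms are
\[\bigoplus_{x\in X^{(d-1)}} I^2(k(x),\omega_x\otimes L)\longrightarrow \bigoplus_{x\in X^{(d)}} I(k(x),\omega_x\otimes L),\]
and to show that this boundary map is surjective. The cohomological dimension hypothesis propagates: for closed points $x\in X^{(d)}$, $k(x)$ is finite over the perfect field $k$, so $\operatorname{cd}_2(k(x))\leq 1$, forcing $I^2(k(x))=0$. Combined with Voevodsky's identification $\mathbf{I}^n/\mathbf{I}^{n+1}\cong \KM_n/2$, the Arason--Pfister Hauptsatz, and a bootstrapping argument up the filtration $\mathbf{I}^{n}\supset \mathbf{I}^{n+1}\supset\cdots$, this should yield the vanishing of the cokernel. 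Once $H^d(X,\mathbf{I}^{d+1}(L))=0$ is established, $\beta$ is injective and the theorem follows.
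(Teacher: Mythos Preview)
Your approach is essentially the paper's: reduce to the single Moore--Postnikov obstruction $o_{d-1}(L)\in H^d(X,\KMW_d(L))$, push forward along $\KMW_d\to\KM_d$, and use the $2$-cohomological dimension hypothesis to make that comparison map an isomorphism. Two differences are worth noting. First, your vanishing argument for $H^d(X,\mathbf I^{d+1}(L))$ is a sketch of exactly the content of \cite[Proposition~5.2]{AF2}, which the paper simply cites; your outline via the Gersten complex and the filtration $\mathbf I^n\supset \mathbf I^{n+1}$ is on the right track, though be aware the bottom Gersten term is $\bigoplus_{x\in X^{(d)}} I(k(x))$, not $I^2$, so one really does need the bootstrapping through $\mathbf I^{d+1}/\mathbf I^{d+2}\cong \KM_{d+1}/2$ and upward rather than term-wise vanishing.

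Second, and more substantively, your assertion that $\beta(o_{d-1}(L))$ is \emph{precisely} $c_1(L)^d$ is not justified by an appeal to ``naturality'' alone: naturality only tells you $\beta(o_{d-1}(L))$ is the pullback of the image of the universal $k$-invariant in $\CH^d(\BGL_1)\cong\mathbb Z\{H^d\}$, i.e.\ some integer multiple $c\cdot c_1(L)^d$. The paper sidesteps this in two ways. For the ``only if'' direction it does not use the obstruction at all: if $L$ lifts to $\mathbb P^{d-1}$ then $c_1(L)^d$ factors through $\CH^d(\mathbb P^{d-1})=0$. For the ``if'' direction only the implication $c_1(L)^d=0\Rightarrow \beta(o_{d-1}(L))=0$ is needed, and that holds for any multiple. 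As written, your ``only if'' argument depends on $c=\pm 1$, which you have not established; either prove that the universal $k$-invariant maps to a generator of $\CH^d(\BGL_1)$, or replace your ``only if'' argument with the direct Chow-ring pullback the paper uses.
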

\begin{proof} First consider $k$ a general field. In the case $r=1$, \Cref{prop:fib_stiefel_pf} gives a fiber sequence

\[\A^{d}\setminus 0 \to \mathbb P^{d-1}_k \xrightarrow{f}  \BGL_1.\]
Note that $\CH^*(\BGL_1) \cong \mathbb Z[H]$ where $H$ is in degree $1$, while 
$\CH^*(\mathbb P^{d-1}_k)\cong \mathbb Z[H]/H^d$.
 The map $f$ induces the quotient map. 
Thus, if $L\: X\to \BGL_1$ lifts to $\mathbb P^{d-1}_k$, $c_1(L)^d=0$. 

On the other hand, the first potentially nontrivial Moore--Postnikov invariant for the morphsim $\mathbb P^{d-1} \to \BGL_1$ takes the form $m\: \BGL_1 \to K^{\mathbb G_m}(\KMW_d,d)$. Let $\xi\: X \to \BGL_1$ classify $L$. Note that, since $X$ has $\A^1$-cohomological dimension at most $d$, $X$ lifts to $\mathbb P^{d-1}$ if and only if $\xi^*(m)=0$. Let $\gamma_1$ classify the universal bundle on $\GL_1$. 

We obtain a commutative diagram
\[ \begin{tikzcd}[row sep=1.5em, column sep =1.5em] \CHW^d(\BGL_1, \gamma_1 ) \ar[r]\ar[d]& \CHW^d(X,L) \ar[d]\\
\CH^d(\BGL_1) \ar[r]& \CH^d(X),
\end{tikzcd}
\]
where both horizontal maps are induced by $\xi$ and both vertical maps are induced by the natural morphism $\KMW_d \to \KM_d$. Under the hypothesis that the $2$-cohomological dimension of $k$ is at most $1$, the right-hand vertical map is an isomorphism \cite[Proposition 5.2]{AF2}. Therefore the obstruction to lifting to $\xi$ to $\mathbb P^{d-1}$ factors through the image of the Moore--Postnikov invariant $m$ in $\CH^d(\BGL_1).$ Note that the image of $m$ in $\CH^d(\BGL_1)\cong \mathbb Z\{ H^d\}$ is nonzero, and is therefore a multiple of the $d$-th power of $H$. Therefore the only obstruction to lifting $\xi$ to $\mathbb P^{d-1}$ is a nonzero multiple of $c_1(L)^d$.
\end{proof}

\begin{remark} The forward implication of \Cref{thm:line-bundle-generation} also follows from \cite[Lemma 5.4]{FFR25}, which in fact applies more generally to $X$ a smooth $k$-variety and $k$ an arbitrary field. \end{remark} 

\subsection{The first nontrivial obstruction for rank at least $2$ and conditions for $r+d-1$ generation}\label{subsec:first-obstr-classical}
Our goal is to study rank $r$ projective modules on smooth affine varieties of $\A^1$-cohomological dimension at most a given $d$, but we begin with a more general result.

\begin{proposition}\label{thm:first-obstr} Let $X=\spec R$ be a smooth affine variety over a perfect field. Let $M$ be a projective $R$-module of rank $r \geq 2$. Suppose that $M$ can be generated by $n+1$ elements where $n\geq r+2$. Let $Q$ be a rank $n-r+1$ module such that $M \oplus Q \cong R^{n+1}$. 
\begin{itemize}
\item  If $n-r$ is odd, the $o_{n-r,n,r}(M)$ vanishes if and only if $s_{n-r+1}(M)$ vanishes, where the latter denotes the top Segre class of $M$ as in \Cref{def:seg}.
\item If $n-r$ is even,  $o_{n-r,n,r}(M)$ vanishes if and only if $e_{n-r+1}(Q)$ vanishes, where the latter denotes the Euler class of $Q$ as in \Cref{def:euler}. In particular, vanishing of $e_{n-r+1}(Q)$ is independent of the choice of $Q$.
\end{itemize}
\end{proposition}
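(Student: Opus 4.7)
The plan is to compute the obstruction group $H^{n-r+1}(X,\pia_{n-r}\ST{r}{n}(\det M))$, identify the obstruction class with a classical characteristic class via the comparison diagram \Cref{diag:big_diagram}, and deduce the stated vanishing equivalences by reducing to the well-understood lifting problem for the fiber sequence $\A^{n-r+1}\setminus 0 \to \BGL_{n-r} \to \BGL_{n-r+1}$.

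First I would apply \Cref{cor:general-first-sheaf-compuation} with $j = n-r$ to identify $\pia_{n-r}\ST{r}{n}$: it is $\KMW_{n-r+1}$ when $n-r$ is even and $\KM_{n-r+1}$ when $n-r$ is odd. Hence $o_{n-r,n,r}(M)$ lives in the twisted Chow--Witt group $\CHW^{n-r+1}(X,\det M)$ in the even case and in the Chow group $\CH^{n-r+1}(X)$ in the odd case. Moreover, the morphism $a_{r}^{n-r}\colon \A^{n-r+1}\setminus 0 \to \ST{r}{n}$ from \Cref{diag:abnotation} induces an isomorphism on $\pia_{n-r}$ in the even case and the natural quotient $\KMW_{n-r+1}\twoheadrightarrow \KM_{n-r+1}$ in the odd case.

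Next I would use the comparison diagram \Cref{diag:big_diagram} with $j=r$. Since $M$ is generated by $n+1$ elements, we get $\tilde M\colon X\to \Gr{r}{n+1}$ whose composition with $\Gr{r}{n+1}\to \BGL_r$ is $M$, so the lifting problem for the top row is equivalent to lifting $\tilde M$ along the middle row, whose fiber is $F := (\A^{n-r+1}\setminus 0)\times \Omega(\A^{n+1}\setminus 0)$. Since $r\geq 2$, we have $n-r+1 \leq n-1$, and as $\A^{n+1}\setminus 0$ is $(n-1)$-connected, $\pia_{n-r+1}(\A^{n+1}\setminus 0)=0$; the projection $F\to \A^{n-r+1}\setminus 0$ is therefore an isomorphism on $\pia_{n-r}$. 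By naturality of primary Moore--Postnikov obstructions along the left column of \Cref{diag:big_diagram}, the obstruction $o_{n-r,n,r}(M)$ is the image under $(a_r^{n-r})_*$ of the primary obstruction for the bottom row applied to the classifying map $Q\colon X\to \BGL_{n-r+1}$, which by \Cref{def:euler} is the Euler class $e_{n-r+1}(Q) \in \CHW^{n-r+1}(X,\det Q^\vee)$; since $\det M \otimes \det Q \cong \mathcal O$, the two twists agree.

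In the even case, $a_{r}^{n-r}$ is an isomorphism on $\pia_{n-r}$, so $o_{n-r,n,r}(M)$ agrees up to a unit with $e_{n-r+1}(Q)$, yielding the vanishing equivalence and independence from the choice of $Q$. In the odd case, $o_{n-r,n,r}(M)$ is the image of $e_{n-r+1}(Q)$ under the natural morphism $\CHW^{n-r+1}(X,\det M)\to \CH^{n-r+1}(X)$ induced by $\KMW_{n-r+1}\to \KM_{n-r+1}$; by \Cref{rmk:euler-lifts} this image is the top Chern class $c_{n-r+1}(Q)$, and since $M\oplus Q\cong R^{n+1}$, the Whitney sum formula gives $c(Q) = s(M)$, hence $c_{n-r+1}(Q) = s_{n-r+1}(M)$. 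The main technical obstacle I anticipate is carefully tracking the line bundle twists through the comparison diagram and precisely verifying the naturality statement for Moore--Postnikov obstructions that I invoke in the left column of \Cref{diag:big_diagram}.
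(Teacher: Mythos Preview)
Your proposal is correct and follows essentially the same route as the paper: both arguments pass through the comparison diagram \Cref{diag:big_diagram}, use the factorization of the left-hand fiber map through $\A^{n-r+1}\setminus 0$ (the paper records this as an auxiliary commutative triangle), invoke \Cref{cor:general-first-sheaf-compuation} to identify the induced map on $\pia_{n-r}$ as an isomorphism or the quotient by $\eta$ according to parity, and then read off the Euler/Segre class from the bottom row via \Cref{def:euler} and \Cref{rmk:euler-lifts}. Your handling of the odd case via the Whitney sum formula and of the twist via $\det M\otimes\det Q\cong\mathcal O$ is exactly what the paper does implicitly.
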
 
\begin{proof} Consider \Cref{diag:big_diagram}:
\begin{equation}\label[diagram]{diag:big_diagram3}
\begin{tikzcd}[row sep=1.5em, column sep =1.5em]
& & X \ar[d]\ar[dl,dashed]\ar[dd, bend left=50, "h"]\\
\ST{r}{n}\ar[r]& \Gr{r}{n}\ar[r] & \BGL_r \\
(\A^{n-r+1}\setminus 0)\times \Omega (\A^{n+1}\setminus 0) \ar[r]\ar[u,"f"]\ar[d,"\operatorname{pr}_1"] & \Gr{r}{n} \ar[r]\ar[u]\ar[d]& \Gr{r}{n+1}\ar[u] \ar[d]\\
\A^{n-r+1}\setminus 0 \ar[r]  \ar[r] & \BGL_{n-r} \ar[r] &\BGL_{n-r+1},
\end{tikzcd}
\end{equation}
where $\operatorname{pr}_1$ is projection onto the first factor and $h$ represents the surjection $M \oplus Q \to M$ from a free module of rank $n+1$ onto $M$. Our goal is to understand the first potentially nontrivial obstruction to the existence of a dashed arrow, which is the obstruction to lifting to the $(n-r)$-th stage of the Moore--Postnikov tower.

We now relate the first obstructions in the Moore--Postnikov towers for the morphisms \[\Gr{r}{n} \to \BGL_r, \,\,\, \Gr{r}{n} \to \Gr{r}{n+1}, \text{  and  }\BGL_{n-r} \to \BGL_{n-r+1}.\] First, note that $\operatorname{pr}_1$ induces an isomorphism on $\pia_{i}$ for $i<n-1$ and therefore for $i<n-r+1$ since $r\geq 2$. Given $h\: X \to \Gr{r}{n+1}$, \Cref{diag:big_diagram3} shows that the first potentially nontrivial obstruction in the lifting problem
\begin{equation}\label{lifting-1}
\begin{tikzcd}[row sep=1.5em, column sep =1.5em]
 & X \ar[d,"h"]\ar[dl,dashed]\\
\Gr{r}{n} \ar{r} &\Gr{r}{n+1}
\end{tikzcd}
\end{equation}
is identified with that for 
\begin{equation}\label{lifting-2}
\begin{tikzcd}[row sep=1.5em, column sep =1.5em]
& X \ar[d,"N"]\ar[dl,dashed]\\
\BGL_{n-r}\ar{r} &\BGL_{n-r+1}.
\end{tikzcd}
\end{equation}
By definition, the latter obstruction is precisely $e_{n-r+1}(Q)$. 

We now relate $f$ from \Cref{diag:big_diagram3} to maps we have already understood. Note that $f \cong a_{r}^{n-r} \circ \operatorname{pr}_{1}$, with notation from \Cref{diag:abnotation}.
So, by \Cref{cor:general-first-sheaf-compuation}, $f$ induces an isomorphism on $\pia_{n-r}$ for $n-r$ even and is the quotient of $\KMW_{n-r+1}$ by $\eta$ when $n-r$ is odd. 
Thus, for any parity of $n-r$, we find that $o_{n-r,n,r}(M)$ vanishes if $ e_{n-r+1}(Q)=0$.  If $n-r$ is even, the converse is also true. In the case that $n-r$ is odd, $o_{n-r,n,r}(M)$ is a unit multiple of $e_{n-r+1}(Q)$ modulo $\eta$, which is a unit multiple of the top Chern class of $Q$. Therefore $o_{n-r,n,r}(M)=0$ if and only if the top Segre class of $M$ vanishes.
\end{proof} 
\begin{remark}\label{rmk:rr1-discussion} In the statement of \Cref{thm:first-obstr} with $n=r+1$, the Moore--Postnikov framework outlined in  \cite[Section 6.1]{AF-splitting} does not apply as stated to the study the problem of lifting along $\Gr{r}{r+1} \to \BGL_r$, since the fiber $\ST{r}{r+1}$ is not simply connected. However, we can import the results of \cite[Section 4]{Robinson72} to the motivic setting. The action of $\pia_1\BGL_r\cong \mathbb G_m$ on $\pia_1\ST{r}{r+1} \cong \mathbb G_m$ arising from the fibration $\ST{r}{r+1} \to \Gr{r}{r+1} \to \BGL_r$ is trivial, the first obstruction group is $\CH^2(X)$, and the first obstruction can again be identified with the Segre class. Given this, the remaining results in this section (\Cref{thm:different-murthy,cor:11,cor:murthy-segre-suff-quad-closed}) apply when $d=2$, and those in the next section (\Cref{thm-rd2,thm-rd2-2,thm:rd2-alg-closed-char-neq-2}) apply when $d=3$. These modifications also prove the homotopy Forster--Swan theorem (\Cref{htpy:Fo-Sw}) when $d=1$.
\end{remark}

If we impose dimension hypotheses on $X$ from \Cref{thm:first-obstr}, we obtain a version of Murthy's theorem \cite{Mu} over a not necessarily algebraically closed field.
\begin{theorem}\label{thm:different-murthy}Let $X=\spec R$ be a smooth affine variety of $\A^1$-cohomological dimension at most $d \geq 3$ over a perfect field $k$.  Let $M$ be a rank $r$ projective module over $R$, where $r\geq 2$. 
\begin{itemize}
\item If $d$ is even, $M$ can be generated by $d+r-1$ generators if and only if the top Segre class of $M$ vanishes.
\item If $d$ is odd and $M \oplus Q \simeq R^{d+r}$ for $Q$ a projective module of rank $d$, then $M$ can be generated by $d+r-1$ generators if and only if the Euler class of $Q$ is zero in $\CHW^d(X,\det M).$ Such an $Q$ always exists, and this condition is independent of the choice of $Q$.
\end{itemize} 
\end{theorem}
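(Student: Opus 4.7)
The plan is to specialize the Moore--Postnikov obstruction-theoretic framework of \Cref{subsec:absolute} to $n = r+d-1$ and then invoke \Cref{thm:first-obstr} to identify the single surviving obstruction with the classical invariant appearing in the statement.

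To set up the lifting problem for $M \: X \to \BGL_r$ along $\Gr{r}{r+d-1} \to \BGL_r$, I would first use the homotopy Forster--Swan theorem \Cref{htpy:Fo-Sw} to present $M$ as a quotient $R^{r+d} \twoheadrightarrow M$; the kernel is a projective module $Q$ of rank $d$ with $M \oplus Q \simeq R^{r+d}$, which takes care of the existence claim in the odd case. This also forces $\det Q^{\vee} \simeq \det M$, which will be needed to place the Euler class in the correct twisted Chow--Witt group. I would then analyze the Moore--Postnikov obstructions $o_{i,r+d-1,r}(M) \in H^{i+1}(X, \pia_i \ST{r}{r+d-1}(\det M))$. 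By \Cref{cor:connectivity}, the fiber $\ST{r}{r+d-1}$ is $(d-2)$-connected, so these obstructions vanish for $i < d-1$; by the $\A^1$-cohomological dimension hypothesis, they also vanish for $i \geq d$. The only potentially nontrivial obstruction is therefore $o_{d-1,r+d-1,r}(M)$, and $M$ is $(r+d-1)$-generated if and only if this class vanishes.

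Finally, I would apply \Cref{thm:first-obstr} with $n = r+d-1$. The hypothesis $n \geq r+2$ is exactly $d \geq 3$, and $n-r = d-1$ governs the case split. When $d$ is even, $n-r$ is odd, and the first bullet of \Cref{thm:first-obstr} identifies vanishing of the surviving obstruction with vanishing of $s_d(M) \in \CH^d(X)$. When $d$ is odd, $n-r$ is even, and the second bullet identifies it with vanishing of $e_d(Q) \in \CHW^d(\spec R, \det Q^{\vee}) = \CHW^d(\spec R, \det M)$, together with independence of the choice of $Q$. I do not anticipate a substantive obstacle: the genuine technical work was already done in \Cref{thm:first-obstr}, and the present theorem is essentially a clean specialization of it under the $\A^1$-cohomological dimension bound. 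The only bookkeeping is tracking the twist identification $\det Q^{\vee} \simeq \det M$ coming from $M \oplus Q \simeq R^{r+d}$ and confirming the numerical constraints on $d$ and $r$.
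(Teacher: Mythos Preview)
Your proposal is correct and follows essentially the same approach as the paper: the paper's proof simply says to take $n = r+d-1$ in \Cref{thm:first-obstr} and notes that by dimensional considerations the first obstruction is the only one. You have spelled out in more detail exactly what those dimensional considerations are (connectivity of the fiber via \Cref{cor:connectivity} and the $\A^1$-cohomological dimension bound), along with the bookkeeping on the twist and the existence of $Q$, but the substance is identical.
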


\begin{proof}
We take $n=r+d-1$ in \Cref{thm:first-obstr}. By dimensional considerations, the first obstruction is the only obstruction to the lifting problem in question.
\end{proof}
This immediately implies that, under cohomological vanishing assumptions, all projective modules of rank $r$ on certain smooth affine $d$-folds can be efficiently generated.

\begin{corollary}\label{cor:11}Let $X=\spec R$ be a smooth affine variety of $\A^1$-cohomological dimension at most $d\geq 3$ over a perfect field and let $r\geq 2$. \begin{itemize}
\item If $d$ is even and $\CH^d(X)=0$, then every rank $r$ projective module over $R$ can be generated by $r+d-1$ elements.
\item If $d$ is odd and $\CHW^d(X,\mathcal L)=0$ for any line bundle $\mathcal L$ on $X$, then every rank $r$ projective module over $R$ can be generated by $r+d-1$ elements.
\end{itemize}
\end{corollary}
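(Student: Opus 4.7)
The plan is to deduce the corollary directly from \Cref{thm:different-murthy} by observing that both types of obstruction classes identified there live in precisely the cohomology groups assumed to vanish; there are no new obstruction computations to do.

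For the even case, I would simply invoke \Cref{thm:different-murthy}, which identifies the sole obstruction to $(r+d-1)$-generation of $M$ with the top Segre class $s_d(M) \in \CH^d(X)$. The hypothesis $\CH^d(X)=0$ forces this class to vanish, and so the conclusion follows.

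For the odd case, I would first produce a rank $d$ complement $Q$ with $M \oplus Q \simeq R^{r+d}$, whose existence is guaranteed by \Cref{htpy:Fo-Sw} (equivalently, the classical Forster--Swan theorem): starting from any $(r+d)$-generation of $M$, the kernel of the corresponding surjection $R^{r+d} \twoheadrightarrow M$ is such a $Q$. Then \Cref{thm:different-murthy} identifies the obstruction with the Euler class $e_d(Q) \in \CHW^d(X, \det M)$, noting that $\det M \simeq (\det Q)^\vee$ since $M \oplus Q$ is free. Applying the vanishing hypothesis to $\mathcal L = \det M$ kills this obstruction group, so $e_d(Q)=0$ and \Cref{thm:different-murthy} gives the conclusion. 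Independence of the choice of $Q$ is already part of the cited theorem, so no extra check is required on our part.

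There is no genuine obstacle: the result is an immediate consequence of \Cref{thm:different-murthy} combined with the assumption that the relevant (Chow or Chow--Witt) obstruction group vanishes. The only minor point to verify is that the assumed uniform vanishing over all line bundles $\mathcal L$ in particular covers the specific twist $\det M$ appearing in the odd case, which is immediate.
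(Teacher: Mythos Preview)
Your proposal is correct and matches the paper's approach exactly: the paper states no proof for this corollary at all, merely remarking that it ``immediately'' follows from \Cref{thm:different-murthy}, which is precisely what you do. Your extra care in producing $Q$ via Forster--Swan and checking the twist $\det M$ is fine but unnecessary, since \Cref{thm:different-murthy} already guarantees the existence of $Q$ and names the relevant twist.
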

With hypotheses on the base field, we obtain a stronger result.
\begin{corollary}\label{cor:murthy-segre-suff-quad-closed} Let $X=\spec R $ be a smooth affine variety of dimension at most $d\geq 3$ over a perfect field $k$. Suppose also that $k$ has $2$-cohomological dimension at most $1$. A projective module $M$ of rank $r \geq 2$ can be generated by $r+d-1$ elements as an $R$-module if and only if the Segre class $s_d(M)$ vanishes.
\end{corollary}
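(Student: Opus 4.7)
The plan is to derive this result from \Cref{thm:different-murthy} by using the hypothesis on the $2$-cohomological dimension of $k$ to collapse the distinction between Chow--Witt and Chow groups in top degree, just as in the proof of \Cref{thm:line-bundle-generation}. When $d$ is even, \Cref{thm:different-murthy} already states that $M$ can be generated by $r+d-1$ elements if and only if $s_d(M)$ vanishes, so there is nothing additional to prove in this case. The work lies entirely in the odd $d$ case.

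When $d$ is odd, the Forster--Swan theorem produces a rank $d$ projective module $Q$ with $M \oplus Q \cong R^{r+d}$. By \Cref{thm:different-murthy}, the first task is to show that $M$ is $(r+d-1)$-generated if and only if the Euler class $e_d(Q) \in \CHW^d(X, \det M)$ vanishes. I would then rewrite this condition in terms of the ordinary top Chern class by passing along the natural forgetful map $\CHW^d(X, \det M) \to \CH^d(X)$ induced by the morphism $\KMW_d \to \KM_d$. Under this map, the Chow--Witt Euler class $e_d(Q)$ maps to the top Chern class $c_d(Q)$ (as recalled in \Cref{rmk:euler-lifts}), and since $M \oplus Q$ is free, the total Chern class identity $c(M)c(Q) = 1$ gives $c_d(Q) = s_d(M)$ by definition of the Segre class.

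The key step is then to invoke \cite[Proposition 5.2]{AF2}, which under the hypothesis that $k$ has $2$-cohomological dimension at most $1$ implies that the forgetful map $\CHW^d(X,\mathcal{L}) \to \CH^d(X)$ is an isomorphism for any line bundle $\mathcal{L}$ on $X$. This exact strategy was used in the proof of \Cref{thm:line-bundle-generation}. Applying this with $\mathcal{L} = \det M$ shows that $e_d(Q) = 0$ in $\CHW^d(X, \det M)$ if and only if $c_d(Q) = s_d(M) = 0$ in $\CH^d(X)$, which completes the equivalence.

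The main obstacle I anticipate is simply ensuring that the comparison isomorphism from \cite[Proposition 5.2]{AF2} is available in the twisted form required here; this is a routine check since the relevant comparison is natural in the twist, but it should be stated explicitly. Everything else is a direct invocation of already-established results: \Cref{thm:different-murthy} for the identification of the obstruction, \Cref{rmk:euler-lifts} for the Chern class comparison, and the definition of the Segre class via inversion of the total Chern class.
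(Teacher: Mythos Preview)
Your proposal is correct and follows essentially the same route as the paper's proof: reduce to the odd $d$ case via \Cref{thm:different-murthy}, invoke \cite[Proposition 5.2]{AF2} to identify $\CHW^d(X,\mathcal L)$ with $\CH^d(X)$ under the hypothesis on the $2$-cohomological dimension of $k$, and then use that the Euler class of $Q$ maps to $c_d(Q)=s_d(M)$ under this identification. The paper's proof is terser and phrases the last step as ``the Euler class of $Q$ is a unit multiple of the top Chern class,'' but the content is the same.
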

\begin{proof} By \Cref{thm:different-murthy}, it suffices to consider the case $d$ odd. If $k$ has $2$-cohomological dimension at most $1$, the natural map $\KMW_{d} \to \KM_{d}$ induces, for any line bundle $\mathcal L$ on $X$, an isomorphism
$\CHW^d(X,\mathcal L) \cong \CH^d(X)$ \cite[Proposition 5.2]{AF2}. Under this identification, the Euler class of $Q$ is a unit multiple of the top Chern class. In particular, the Euler class of a complementary bundle $Q$ is a unit multiple of the Segre class of $M$.
\end{proof}
\begin{remark} If $k$ is algebraically closed and $R$ is a $k$-algebra of dimension $d$, then the previous corollary can be deduced from celebrated work of Murthy \cite[Corollary 3.15]{Mu}.  On the other hand, note, for example, that \Cref{cor:murthy-segre-suff-quad-closed} applies when $k$ is taken to be a finite field, since such fields have $2$-cohomological dimension equal to $1$.
\end{remark}

In combination with results from \cite{AFL25}, we also deduce efficient generation results for real algebraic vector bundles. 

\begin{theorem}
Let $X = \spec R$ be a smooth affine algebra of dimension $d\geq 3$ over $\mathbb{R}$. Let $M$ be a rank $r\geq 2$ projective $R$-module over $R$. Then M can be generated by $d+r-1$ elements if and only if the top Segre class of M vanishes.
 \end{theorem}
\begin{proof}The statement for $n$ even follows from \Cref{thm:different-murthy}. For $n$ odd, we appeal to \cite[2.1.1 and 2.1.2(3)]{AFL25}. 
\end{proof}

\subsection{Further reduction of the number of generators and secondary obstructions}\label{further-reduction}
Assuming that a module can be generated by $r+d-1$ elements, we study conditions for $(r+d-2)$-generation.
\begin{proposition}\label{thm-rd2}Let $X=\spec R$ be a smooth affine variety of $\A^1$-cohomological dimension at most  $d\geq 4$ over a perfect field. Suppose that $M$ is a rank $r\geq 2$ projective module over $R$ generated by $r+d-1$ elements and that $H^d(X,\pia_{d-1}\ST{r}{r+d-2}(\mathcal L))=0$ for any line bundle $\mathcal L$ on $X$. Then:
\begin{itemize}
\item If $d$ is odd, $M$ can be generated by $d+r-2$ elements if and only if the Segre class $s_{d-1}(M)$ vanishes.
\item Let $Q$ be a projective module of rank $d-1$ such that $M\oplus Q$ is free.  If $d$ is even, then $M$ can be generated by $d+r-2$ elements if and only if $e_{d-1}(Q)=0$. This condition is independent of the choice of $Q$.
\end{itemize} 
\end{proposition}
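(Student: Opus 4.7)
The plan is to adapt the Moore--Postnikov obstruction strategy of \Cref{subsec:absolute} and \Cref{thm:first-obstr} to the lifting problem
\[
\begin{tikzcd}[row sep=1.5em, column sep=1.5em]
& \Gr{r}{r+d-2} \ar[d] \\
X \ar[r, "M"] \ar[ur, dashed] & \BGL_r,
\end{tikzcd}
\]
using the cohomological vanishing hypothesis to kill the secondary obstruction, and then applying \Cref{thm:first-obstr} to identify the remaining primary obstruction with a Segre or Euler class. Such a lift exists if and only if $M$ is generated by $r+d-2$ elements.

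First, I would record the obstruction-theoretic inputs. The fiber of $\Gr{r}{r+d-2} \to \BGL_r$ is $\ST{r}{r+d-2}$, which is $(d-3)$-connected by \Cref{cor:connectivity}. Since $X$ has $\A^1$-cohomological dimension at most $d$, among the obstructions
\[
o_{i,\, r+d-2,\, r}(M) \in H^{i+1}\!\bigl(X,\ \pia_i \ST{r}{r+d-2}(\det M)\bigr)
\]
recalled in \Cref{subsec:absolute}, only the primary obstruction at $i = d-2$ and the secondary at $i = d-1$ can be nonzero. Applying the standing hypothesis to $\mathcal L = \det M$ forces the target of the secondary obstruction to vanish. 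Hence $M$ lifts through $\Gr{r}{r+d-2}$ if and only if $o_{d-2,\, r+d-2,\, r}(M) = 0$.

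Next, I would identify the primary obstruction via \Cref{thm:first-obstr} with $n = r+d-2$. The required hypotheses $r \geq 2$, $n \geq r+2$ (equivalent to $d \geq 4$), and that $M$ is generated by $n+1 = r+d-1$ elements are all in force, and the rank $n-r+1 = d-1$ complementary summand $Q$ produced there is precisely the one appearing in the proposition. Reading off the dichotomy of \Cref{thm:first-obstr} by the parity of $n - r = d - 2$: if $d$ is odd, then $o_{d-2,\, r+d-2,\, r}(M) = 0$ iff the top Segre class $s_{n-r+1}(M) = s_{d-1}(M)$ vanishes; if $d$ is even, then $o_{d-2,\, r+d-2,\, r}(M) = 0$ iff $e_{n-r+1}(Q) = e_{d-1}(Q)$ vanishes for any rank $d-1$ projective $Q$ with $M \oplus Q \cong R^{r+d-1}$, the independence of $Q$ being inherited from \Cref{thm:first-obstr}. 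There is no genuinely new difficulty beyond what is already handled there: the proof is essentially dimensional bookkeeping, and the one point that deserves care is matching $d$ odd to the Segre-class case and $d$ even to the Euler-class case.
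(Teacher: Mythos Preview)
Your proof is correct and follows essentially the same approach as the paper: both apply \Cref{thm:first-obstr} with $n=r+d-2$ to identify the primary obstruction, and use the cohomological vanishing hypothesis (with $\mathcal L=\det M$) to eliminate the secondary obstruction. Your careful verification of the hypotheses $n\geq r+2$ and the parity matching $n-r=d-2$ is slightly more explicit than the paper's version, but the argument is the same.
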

\begin{proof} By \Cref{thm:first-obstr} with $n=r+d-2$ and $j=r$, the first obstruction to reducing the number of generators is 
$s_{d-1}(M)$ for $d$ odd and $e_{d-1}(Q)$ for $d$ even. 
The second obstruction is valued in $H^d(X,\pia_{d-1}\ST{r}{r+d-2}(\mathcal L))=0$, where $\mathcal L$ the determinant of $M$. 
\end{proof}
The above proposition simplifies under the additional hypothesis that the base field is quadratically closed, following the same proof as 
\Cref{cor:murthy-segre-suff-quad-closed}.
\begin{corollary}\label{thm-rd2-2}Let $X=\spec R$ be a smooth affine variety of $\A^1$-cohomological dimension at most 
$d \geq 4$ over a perfect quadratically closed field. Suppose that $M$ is a rank $r\geq 2$ projective module generated by $r+d-1$ elements over $R$. 
Suppose also that $H^d(X,\pia_{d-1}\ST{r}{r+d-2}(\mathcal L))=0$ for any line bundle $\mathcal L$ on $X$. 
Then $M$ can be generated by $d+r-2$ elements if and only if the Segre class $s_{d-1}(M)$ vanishes.
\end{corollary}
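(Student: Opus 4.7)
The plan is to reduce \Cref{thm-rd2-2} to \Cref{thm-rd2} together with a quadratic-to-classical comparison in codimension $d-1$, exactly in the spirit of the proof of \Cref{cor:murthy-segre-suff-quad-closed}. The cohomological vanishing hypothesis $H^d(X,\pia_{d-1}\ST{r}{r+d-2}(\mathcal L))=0$ in \Cref{thm-rd2-2} matches that of \Cref{thm-rd2}, so the latter applies. For $d$ odd the conclusion is immediate from \Cref{thm-rd2}, as the obstruction is literally $s_{d-1}(M)$. Thus the only case requiring work is $d$ even, where the obstruction supplied by \Cref{thm-rd2} is the Euler class $e_{d-1}(Q) \in \CHW^{d-1}(X,\det Q^\vee)$ for some (equivalently any) rank $d-1$ projective $Q$ with $M\oplus Q$ free of rank $r+d-1$.

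Assume then that $d$ is even. I would pass from the twisted Chow--Witt group to the ordinary Chow group by means of the natural map $\KMW_{d-1}\to \KM_{d-1}$. A quadratically closed field has $2$-cohomological dimension at most $1$, so \cite[Proposition 5.2]{AF2} yields an isomorphism $\CHW^{d-1}(X,\mathcal L)\simeq \CH^{d-1}(X)$ for every line bundle $\mathcal L$ on $X$; in particular for $\mathcal L = \det Q^\vee$. Under this isomorphism the Euler class of $Q$ is sent to a unit multiple of the top Chern class $c_{d-1}(Q)$ (see \Cref{rmk:euler-lifts}). Hence $e_{d-1}(Q)=0$ if and only if $c_{d-1}(Q)=0$ in $\CH^{d-1}(X)$.

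Finally, I would identify $c_{d-1}(Q)$ with $s_{d-1}(M)$: since $M\oplus Q$ is free, the Whitney sum formula gives $c(M)\cdot c(Q)=1$ in $\CH^\ast(X)$, so $c(Q)$ equals the total Segre class $s(M)$ by the very definition of Segre classes (\Cref{def:seg}); taking the $(d-1)$-graded piece yields $c_{d-1}(Q)=s_{d-1}(M)$. Combined with the previous step, $e_{d-1}(Q)=0$ if and only if $s_{d-1}(M)=0$, which together with \Cref{thm-rd2} proves the corollary. I do not anticipate a serious technical obstacle; the one small point to keep in mind is that the twist by $\det Q^\vee$ is harmless after the comparison to $\CH^{d-1}(X)$, which is untwisted, and that the ``if and only if'' in \Cref{thm-rd2} already includes the independence of the choice of $Q$.
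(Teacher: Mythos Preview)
Your proposal is correct and is exactly the argument the paper intends: reduce to \Cref{thm-rd2}, and in the even $d$ case convert the Euler class $e_{d-1}(Q)$ into the top Chern class $c_{d-1}(Q)=s_{d-1}(M)$ via the Chow--Witt/Chow comparison over a quadratically closed base. One small precision: the isomorphism $\CHW^{d-1}(X,\mathcal L)\cong\CH^{d-1}(X)$ is cited in the paper (e.g.\ in the proof of \Cref{cor:symplectic-even-further-reduction}) as \cite[Corollary~5.3]{AF2} for quadratically closed $k$, rather than by passing through ``$2$-cohomological dimension $\le 1$'' and \cite[Proposition~5.2]{AF2}, which the paper only invokes in the top cohomological degree $d$.
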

In the case of an algebraically closed base field, we prove a sharper result.
\begin{theorem}\label{thm:rd2-alg-closed-char-neq-2} Let $X = \spec R$ be a smooth affine variety of dimension at most $d\geq 4$ over an algebraically closed field $k$. Suppose also that $H^d(X,\pia_{d-1}(\A^{d-1}\setminus 0))=0$. Let $M$ be a projective module of rank $r\geq 2$. Then $M$ can be generated by $r+d-2$ elements if and only if $s_{d}(M)=0$ and $s_{d-1}(M)=0$.
\end{theorem}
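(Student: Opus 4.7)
The forward direction is immediate: if $M\oplus P\cong R^{r+d-2}$ with $P$ of rank $d-2$, then $s(M) = c(P)$ in $\CH^*(X)$, and $c_{d-1}(P) = c_d(P) = 0$ by the rank bound. The plan is therefore to focus on the reverse direction.

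Assume $s_d(M) = s_{d-1}(M) = 0$. Since the algebraically closed field $k$ has $2$-cohomological dimension at most $1$ and is quadratically closed, \Cref{cor:murthy-segre-suff-quad-closed} applied with $s_d(M) = 0$ first yields that $M$ is generated by $r+d-1$ elements. The strategy is to invoke the obstruction framework of \Cref{subsec:absolute} for the lifting of $M\colon X \to \BGL_r$ along $\Gr{r}{r+d-2} \to \BGL_r$. The only potentially nonvanishing obstructions are the primary $o_{d-2} \in H^{d-1}(X,\pia_{d-2}\ST{r}{r+d-2}(\det M))$ and the secondary $o_{d-1} \in H^d(X,\pia_{d-1}\ST{r}{r+d-2}(\det M))$; higher obstructions vanish since $\dim X = d$.

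For the primary, \Cref{thm:first-obstr} with $n = r+d-2$ identifies $o_{d-2}$ with the Segre class $s_{d-1}(M)$ (for $d$ odd) or the Euler class $e_{d-1}(Q)$ of a rank $d-1$ complement $Q$ (for $d$ even). In the latter case, the quadratic closure of $k$ allows one to identify the relevant Chow--Witt group with the corresponding Chow group in the style of \Cref{cor:murthy-segre-suff-quad-closed}, matching $e_{d-1}(Q)$ with $s_{d-1}(M)$. Thus $o_{d-2} = 0$ by hypothesis. For the secondary, the plan is to use the fiber sequence $\A^{d-1}\setminus 0 \to \ST{r}{r+d-2} \to \ST{r-1}{r+d-2}$ of \Cref{fiber2} together with \Cref{prop:necessary-computation} to extract a short exact sequence
\[
0 \to A \to \pia_{d-1}\ST{r}{r+d-2} \to Q_0 \to 0,
\]
where $A$ is a quotient of $\pia_{d-1}(\A^{d-1}\setminus 0)$ and $Q_0$ is either $\pia_{d-1}\ST{r-1}{r+d-2}$ or its submodule $2\KM_d$, depending on the parity of $d$. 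Combining $H^d(X,\pia_{d-1}(\A^{d-1}\setminus 0)(\mathcal L))=0$ (the hypothesis, interpreted with the appropriate twist) with dimensional vanishing yields $H^d(X,A(\mathcal L)) = 0$, and hence an injection $H^d(X,\pia_{d-1}\ST{r}{r+d-2}(\mathcal L)) \hookrightarrow H^d(X,Q_0(\mathcal L))$.

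The main obstacle is to identify the image of $o_{d-1}$ in $H^d(X,Q_0(\det M))$. The plan is to compare the Moore--Postnikov tower for $\Gr{r}{r+d-2} \to \BGL_r$ with that for the intermediate $\Gr{r}{r+d-1} \to \BGL_r$ via the commutative diagram \Cref{diag:big_diagram}: by naturality, the image of $o_{d-1}$ under the Stiefel reduction map $\ST{r}{r+d-2} \to \ST{r-1}{r+d-2}$ should match (up to a unit, and modulo $\eta$ in the $d$-odd case) the primary obstruction to lifting $M$ along $\Gr{r}{r+d-1} \to \BGL_r$, which by \Cref{thm:first-obstr} together with the algebraically closed hypothesis is $s_d(M)$. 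Since $s_d(M) = 0$ by hypothesis, this image vanishes, and the injection forces $o_{d-1} = 0$. The technical heart of the argument is carrying out this naturality comparison rigorously, keeping track of the parity-dependent variants of the exact sequences in \Cref{prop:necessary-computation} and the twist by $\det M$.
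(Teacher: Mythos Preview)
Your proposal is correct and follows essentially the paper's approach. The only refinement worth noting is that the naturality comparison for the secondary obstruction should run through the map $g\colon\ST{r}{r+d-2}\to\ST{r}{r+d-1}$ of fibers induced by $\Gr{r}{r+d-2}\to\Gr{r}{r+d-1}$ over $\BGL_r$ (not via \Cref{diag:big_diagram}); the paper then factors $g$ through your reduction map $b\colon\ST{r}{r+d-2}\to\ST{r-1}{r+d-2}$ and checks that the remaining leg $\ST{r-1}{r+d-2}\to\ST{r}{r+d-1}$ is an $H^d$-isomorphism using \Cref{cor:stable-range} for $r\geq 3$ and the vanishing $\mathcal I^{d+1}|_X=0$ (from the algebraically closed hypothesis) for $r=2$ and for the $2\KM_d\hookrightarrow\KMW_d$ step in the odd-$d$ case.
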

\begin{proof}
 Appealing to \Cref{thm:first-obstr} and \cite[Corollary 5.3]{AF2}, the first possibly nontrivial obstruction to lifting 
\begin{equation}\label[diagram]{diag:basic-lift-a}\begin{tikzcd}[row sep=1.5em, column sep =1.5em]
& X \ar[d]\ar[dl,dashed]\\
\Gr{r}{r+d-2} \ar[r]& \BGL_r \end{tikzcd}\end{equation}
is precisely $s_{d-1}(M)$. Consider the second possibly nontrivial obstruction. 
We have the following diagram of fiber sequences:
\begin{equation}\label[diagram]{diag:second-osbtr}\begin{tikzcd} [row sep=1.5em, column sep =1.5em]
\ST{r}{r+d-2} \ar[r,"g"] \ar[d]& \ST{r}{r+d-1}\ar[d]\\ 
\Gr{r}{r+d-2} \ar[r] \ar[d,"A"]& \Gr{r}{r+d-1}\ar[d,"B"] \\
 \BGL_r \ar[r] & \BGL_r.
\end{tikzcd}
\end{equation}
We are interested in lifting along the map $A$; we compare to lifting along $B$. Given a lift to the 
$(d-1)$-st stage of the Moore--Postnikov tower for the morphism $A$, the obstruction to lifting to the $d$-th stage in the Moore--Postnikov tower for 
$A$ maps to the obstruction to lifting to the 
$d$-th stage of the Moore--Postnikov tower for $B$. This map of obstructions is induced by the map
\[g_*:=H^d(X,\pia_{d-1}g): H^d(X,\pia_{d-1}{\ST{r}{r+d-2}}(\mathcal L)) \to H^d(X,\pia_{d-1}{\ST{r}{r+d-1}}(\mathcal L))\] on cohomology, where $\mathcal L=\det M$. We use analogous notation for other induced maps on cohomology.
We will show that $g_*$ is injective. Given this, the remaining obstruction to lifting along $A$ is precisely the first and only nontrivial obstruction to lifting along $B$, which by \Cref{cor:murthy-segre-suff-quad-closed} is $s_{d}(M)$.

Note that the morphism $g$ can be factored as follows:
\begin{equation}\label[diagram]{diag:second-osbt2}\begin{tikzcd} 
 \ST{r}{r+d-2} \ar[r,"b_{d-2}^r"]\ar[dr,"g"]& \ST{r-1}{r+d-2}\ar[d]\\
 & \ST{r}{r+d-1},
\end{tikzcd}
\end{equation}
where $b_{d-2}^r$ is as in \Cref{diag:abnotation} with fiber $\A^{d-1}\setminus 0$. 
We claim that the vertical morphism in \Cref{diag:second-osbt2} induces an isomorphism after applying 
$H^d(X,\pia_{d-1}(-)(\mathcal L)).$ For 
$r \geq 3$, this is by \Cref{cor:stable-range}. For 
$r=2$, the claim follows from \Cref{pirr2} and \cite[Proposition 5.2]{AF2}. 

Now, consider the following commutative diagram, derived from \Cref{diag:second-osbt2}:

\begin{equation}\label[diagram]{diag:injectivity}
\begin{tikzcd} [row sep=1.5em, column sep =1.5em]
H^d(X,\pia_{d-1}(\A^{d-1}\setminus 0)(\mathcal L)) \ar[r]& H^d(X,\pia_{d-1}\ST{r}{r+d-2}(\mathcal L))\ar[d,"{(b_{d-2}^r)_*}"]\ar[dd,"g_*", bend left=90] \\ 
 & H^d(X,\pia_{d-1}\ST{r-1}{r+d-2}(\mathcal L))\ar[d,"\simeq"]\\
 & H^d(X,\pia_{d-1}\ST{r}{r+d-1}(\mathcal L)).
\end{tikzcd}
\end{equation}
We claim the image of the horizontal arrow in \Cref{diag:injectivity} contains the kernel of $g_*$. Given this, if 
$H^d(X,\pia_{d-1}(\A^{d-1}\setminus 0))=0$, then by \cite[Lemma 2.2.3]{fasel2021suslin} the map $g_*$ is injective.
To prove the claim, consider \Cref{prop:necessary-computation} with $j=d-2$, $n=r+d-2$. If 
$d$ is even, we see that the kernel of the map on $d$-th cohomology induced by 
$\pia_{d-1}b_{d-2}^r$ is $H^d(X,\operatorname{Im}(\pia_{d-1}a_{d-2}^r)(\mathcal L))$ which is a quotient of 
$H^d(X,\pia_{d-1}(\A^{d-1}\setminus 0)(\mathcal L)).$
If $d$ is odd,  consider again \Cref{prop:necessary-computation}. Consider the sequence
\begin{equation}\label[diagram]{exact-stuff11}\begin{tikzcd}[row sep=1.5em, column sep =1.5em] 
& & H^{d-1}(X,\mathcal I^{d}(\mathcal L))\ar[d]\\
H^d(X,\operatorname{Im}(\pia_{d-1}a_{d-2}^r)(\mathcal L)) \ar[r]
& H^d(X,\pia_{d-1}\ST{r}{r+d-2}(\mathcal L)) \ar[r] \ar[dr,"(b_{d-2}^r)_*\,\,\,\,\,\,\," below]
& H^d(X,2\KM_{d}(\mathcal L)) \ar[d]\\
& & H^d(X, \KMW_d(\mathcal L))\ar[d]\\
& & H^d(X,\mathcal I^{d}(\mathcal L))\end{tikzcd}
\end{equation}
where $\mathcal I^{d}$ is the $d$-th power of the fundamental ideal in the Witt ring. Both the horizontal row and vertical column in \Cref{exact-stuff11} are exact. 
Since $k$ is algebraically closed and $d\geq 4$, \cite[Proposition 5.2]{AF2} implies that the middle vertical morphism is an isomorphism.
\end{proof}
\begin{remark} \Cref{thm:rd2-alg-closed-char-neq-2} gives another proof of \Cref{char-zero-segre-vanish-rd2}: by the proof of \cite[Theorem 7.1.1]{ABH}, if $k$ is algebraically closed of characteristic zero then $H^d(X,\pi_{d-1}(\A^{d-1}\setminus 0))=0$. 
\end{remark} 

\section{A Forster--Swan theorem for symplectic modules}\label{sec:sym-forster-swan}
Let $X=\spec R$ be a smooth affine variety over a perfect field $k$ and let 
$n \geq r+1$. Given an $R$ module $M$ of rank $2r$ equipped with a nondegenerate symplectic form 
$\omega\: M \otimes M \to R$, one might seek an efficient generation result in the symplectic setting. 
The symplectic analogues of free modules are hyperbolic modules, i.e., direct sums of copies of the rank 
$2$ symplectic module $H$ given by $R^2$ with the form associated to the matrix 
\[\begin{pmatrix} 0 & 1 \\ -1 & 0 \end{pmatrix}.\]  
As such, we might ask for the minimal $k$ such that $M$ is a direct summand of $H^{\oplus k}$. 
Let $\xi\: X \rightarrow \BSp_{2r}$ represent a symplectic $R$-module of rank $2r$. The universal rank $2r$ symplectic bundle on $\Grsp{2r}{2n}$ is represented by a morphism
\begin{equation}\label[diagram]{symp-fib-seq1}\Grsp{2r}{2n} \rightarrow \BSp_{2r},\end{equation} and a lift of $\xi$ to a map $\tilde{\xi}\: X \to \Grsp{2r}{2n}$ 
corresponds to a presentation of the symplectic module $\xi$ as a direct summand of a rank 
$2n$ hyperbolic symplectic $R$-module.  To see when such a presentation exists, we consider the Moore--Postnikov factorization of \Cref{symp-fib-seq1}. This yields a symplectic Forster--Swan theorem:

\begin{theorem}\label{thm:symplectic-F-S}
Let $X = \spec R$ be a smooth affine variety of $\A^1$-cohomological dimension at most 
$d \geq 2$ over a perfect field $k$. Let $M$ be a symplectic module of rank $2r$. \begin{itemize}
\item If $d$ is even, then $M$ is a direct summand of a hyperbolic symplectic $R$-module of rank $2r+d$.
\item If $d$ is odd, then $M$ is a direct summand of a hyperbolic symplectic $R$-module of rank $2r+d-1.$
\end{itemize}
\end{theorem}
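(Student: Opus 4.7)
The plan is to apply motivic Moore--Postnikov obstruction theory, as set up in \Cref{subsec:absolute}, to the map $\Grsp{2r}{2n} \to \BSp_{2r}$ classifying the tautological rank-$2r$ symplectic bundle. A lift of $\xi\: X \to \BSp_{2r}$ (the classifying map of $M$) along this map is, as recalled in the preamble to the theorem, equivalent to exhibiting $M$ as a symplectic direct summand of the rank-$2n$ hyperbolic module $H^{\oplus n}$. Because symplectic vector bundles carry canonical trivializations of their determinants, the obstructions produced by the Moore--Postnikov tower will live in \emph{untwisted} Nisnevich cohomology groups of the form $H^{i+1}(X, \pia_i F)$, where $F$ denotes the fiber of $\Grsp{2r}{2n} \to \BSp_{2r}$.

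The key input will be the connectivity of the fiber. By \Cref{prop:symplectic_fiber_seqs} this fiber is a symplectic Stiefel variety, and \Cref{cor:first-symp-homotopy} (interpreted to match our indexing) shows that it is $2(n-r)$-connected. Combined with the hypothesis $\operatorname{cd}_{\A^1}(X) \le d$, each obstruction group vanishes automatically unless its index $i$ lies in the window
\[
2(n-r) + 1 \le i \le d-1.
\]
It will then suffice to choose $n$ so that this range of degrees is empty, i.e., $2(n-r) \ge d-1$. For $d$ even, the parity of $2(n-r)$ forces $2(n-r) \ge d$, giving the minimal choice $n = r + d/2$ and a hyperbolic summand of rank $2r+d$. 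For $d$ odd, the parity aligns favorably and the sharper choice $n = r + (d-1)/2$ already works, giving hyperbolic rank $2r+d-1$.

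The main step requiring care will be the correct identification of the fiber and the extraction of its sharp $2(n-r)$-connectivity from the Stiefel computations of \Cref{subsec:actual_computations}; once this is in hand, the obstruction count is automatic. The parity gain coming from the evenness of $2(n-r)$ is precisely what should account for the one-rank improvement in the odd-dimensional case over the naive bound $2r+d$. In both cases one has $n > r$ (using $d \ge 2$), so the fiber will be at least simply connected and the standard Moore--Postnikov machinery of \cite[Appendix B]{Morel} will apply without complications arising from fundamental-group actions or low connectivity.
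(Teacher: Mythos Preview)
Your proposal is correct and follows essentially the same approach as the paper: run Moore--Postnikov obstruction theory for $\Grsp{2r}{2n} \to \BSp_{2r}$, identify the fiber as $\STsp{2r}{2n}$ via \Cref{prop:symplectic_fiber_seqs}, and observe that every obstruction group $H^{i+1}(X,\pia_i\STsp{2r}{2n})$ vanishes either by the connectivity of the fiber or by the bound on $\A^1$-cohomological dimension. Your appeal to \Cref{cor:first-symp-homotopy} for the sharp $2(n-r)$-connectivity (rather than the weaker $(2(n-r)-1)$-connectivity of \Cref{cor:connectivity-sp} cited in the paper) is in fact exactly what is needed to handle the odd-$d$ case, where one must know $\pia_{d-1}\STsp{2r}{2r+d-1}=0$.
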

\begin{proof} 
For $d$ even, set $2n=2r+d$.  For $d$ odd, set $2n=2r+d-1.$ 
Given a lift of $M\: X \to \BSp_{2r}$ to the $(i-1)$-st stage of the Moore-Postnikov factorization for \Cref{symp-fib-seq1}, the obstruction to lifting to the 
$i$-th stage is an element in $ H^{i+1} (X, \pia_{i} \STsp{2r}{2n})$ (cf. \Cref{prop:symplectic_fiber_seqs}) . If $i \geq d$, then this obstruction to lifting vanishes automatically. 
For $i< d$, $\pia_{i} \STsp{2r}{2n}=0$ by \Cref{cor:connectivity-sp}.
\end{proof}
As in the case of finitely generated projective modules, it is natural to ask under which circumstances the estimate from 
\Cref{thm:symplectic-F-S} above can be improved. The following elementary fact will be useful:

\begin{lemma}\label{lem:elementary} Let $(N,\omega)$ be a symplectic module of rank $2r$ over a commutative ring $R$. Suppose that 
$Q$ splits a rank $1$ summand as an $R$-module. Then $Q$ splits a hyperbolic module as a symplectic $R$-module.
\end{lemma}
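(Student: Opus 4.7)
The plan is to interpret the hypothesis as producing a unimodular element of $Q$, then to use the nondegeneracy of $\omega$ to produce a partner element with which it spans a hyperbolic plane that is both a direct summand of $Q$ as an $R$-module and an orthogonal summand with respect to $\omega$.

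First, I would unpack the hypothesis that $Q$ splits a rank $1$ free summand: this exhibits an element $e \in Q$ which is unimodular, i.e., there exists $\phi \in Q^{\vee}$ with $\phi(e) = 1$. Since $\omega$ is nondegenerate, the adjoint map $\omega^{\sharp} \: Q \to Q^{\vee}$, $v \mapsto \omega(v, -)$, is an isomorphism of $R$-modules. Isomorphisms preserve unimodularity, so the functional $\omega(e,-) \in Q^{\vee}$ is unimodular. Dualizing once more (using that $Q$ is finitely generated projective so $Q^{\vee\vee} \cong Q$), this means there exists $f \in Q$ with $\omega(e,f) = 1$.

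Next, I would verify that the submodule $H := Re + Rf \subset Q$ is free of rank $2$, that $\omega$ restricts to the standard hyperbolic form on $H$ in the basis $\{e,f\}$, and that $H$ is an orthogonal direct summand. The key computation is the retraction
\[
\psi \: Q \longrightarrow R^{2}, \qquad v \longmapsto \bigl(\omega(v,f),\, -\omega(v,e)\bigr),
\]
which, combined with $\omega(e,f) = 1$ and the alternating property, sends $ae + bf \mapsto (a,b)$. Hence $\psi$ is a surjection splitting the inclusion $H \hookrightarrow Q$, so $Q = H \oplus \ker \psi$ as $R$-modules. By construction, $\ker \psi = \{v : \omega(v,e) = \omega(v,f) = 0\} = H^{\perp}$, the symplectic orthogonal. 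The restriction of $\omega$ to $H$ has matrix $\bigl(\begin{smallmatrix} 0 & 1 \\ -1 & 0 \end{smallmatrix}\bigr)$ in the basis $\{e,f\}$, so $(H, \omega|_{H})$ is a hyperbolic module in the sense of the paper; nondegeneracy on $H$ then forces $\omega|_{H^{\perp}}$ to be nondegenerate as well, giving the desired symplectic splitting $Q = H \oplus H^{\perp}$.

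The only substantive step is the passage from "$e$ unimodular in $Q$" to "there exists $f \in Q$ with $\omega(e,f) = 1$"; the rest is a routine bilinear-form calculation. Since this passage is a direct consequence of nondegeneracy together with the fact that $Q$ is finitely generated projective (so that the double-dual identification is valid), I do not anticipate any real obstacle.
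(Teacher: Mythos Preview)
Your proposal is correct and follows essentially the same approach as the paper's proof: use the rank~$1$ splitting to obtain a unimodular element, invoke nondegeneracy of $\omega$ to find a partner so that the pair spans a hyperbolic plane, and then pass to the orthogonal complement. The only cosmetic difference is that the paper defines the partner directly as $(\omega^{\sharp})^{-1}(\phi)$, whereas you argue via unimodularity of $\omega(e,-)$ and double duality; your explicit retraction $\psi$ is a welcome extra detail the paper omits.
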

\begin{proof} Let $Q^{\vee}$ denote the $R$-linear dual of $Q$ and let
\[\omega^{\#}\: Q \to Q^{\vee}\] denote the isomorphism of $Q$ with $Q^{\vee}$ associated to 
$\omega$.  The hypothesis that $Q$ splits off a trivial module is equivalent to the existence of a surjection of $R$-modules $\phi\: Q \to R.$
Let $\alpha \in Q$ be a preimage of $1\in R$. Let 
$\beta \in Q$ be the preimage of 
$\phi \in Q^{\vee}$ under $\omega^{\#}$. Since $\omega(\alpha,\beta)=1$, 
we find that the submodule of $Q$ generated by $\alpha$ and $\beta$ is hyperbolic.
The $\omega$-completement of the submodule generated by $\alpha$ and $\beta$ in $Q$ is a symplectic submodule of 
$Q$ that is of rank $d-2$ as an $R$-module. Call this symplectic module $Q'$. We find that $Q\simeq Q'\oplus H$, 
where $H$ denotes the rank $2$ hyperbolic symplectic module.
\end{proof} 
\begin{theorem}\label{thm:first-obstruction-even} Let $X = \spec R$ be a smooth affine variety of even $\A^1$-cohomological dimension at most 
$d \geq 2$ over a perfect field $k$. Let $M$ be a symplectic module of rank $2r$. Let $Q$ be any module such that $M\oplus Q$ is hyperbolic of rank $2r+d$. 
Then $M$ is a summand of a hyperbolic module of rank $2r+d-2$ if and only if $e_d(Q)$ is zero in $\CHW^d(X)$. 
This condition is independent of the choice of $Q$.
\end{theorem}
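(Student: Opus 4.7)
The plan is to apply Moore--Postnikov obstruction theory to the lifting problem
\[
\begin{tikzcd}[row sep=1.5em, column sep=1.5em]
& \Grsp{2r}{\A^{2r+d-2}} \ar[d] \\
X \ar[r, "M"] \ar[ur, dashed] & \BSp_{2r},
\end{tikzcd}
\]
whose fiber is $\STsp{2r}{2r+d-2}$ by \Cref{seq:stiefel}, and for which the existence of a lift is equivalent to presenting $M$ as a summand of a hyperbolic symplectic module of rank $2r+d-2$. By \Cref{cor:connectivity-sp} this fiber is $(d-3)$-connected, and \Cref{cor:first-symp-homotopy} identifies $\pia_{d-1}\STsp{2r}{2r+d-2} \cong \KMW_{d-1}$. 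Combined with the $\A^1$-cohomological dimension bound on $X$, these ensure that only a single primary obstruction $o \in H^d(X, \KMW_{d-1})$ can prevent the lift; it is intrinsic to $M$ since the lower Moore--Postnikov stages carry no obstruction data.

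Following the strategy of the proof of \Cref{thm:first-obstr}, I would invoke the symplectic big diagram \Cref{prop:symplectic_big_diagram} with $j = r$ and $n = r + d/2 - 1$, producing a commutative diagram of fiber sequences
\[
\begin{tikzcd}[row sep=1.5em, column sep=1.5em]
\STsp{2r}{2r+d-2} \ar[r] & \Grsp{2r}{\A^{2r+d-2}} \ar[r] & \BSp_{2r} \\
(\A^d\setminus 0) \times \Omega(\A^{2r+d}\setminus 0) \ar[u, "f"] \ar[r] \ar[d, "\operatorname{pr}_1"] & \Grsp{2r}{\A^{2r+d-2}} \ar[r] \ar[u] \ar[d] & \Grsp{2r}{\A^{2r+d}} \ar[u] \ar[d] \\
\A^d\setminus 0 \ar[r] & \BSp_{d-2} \ar[r] & \BSp_{d}.
\end{tikzcd}
\]
The hypothesis that $M \oplus Q$ is hyperbolic of rank $2r+d$ supplies a lift of $M$ to $\Grsp{2r}{\A^{2r+d}}$, whose postcomposition with the right-hand vertical map classifies $Q \colon X \to \BSp_d$. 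I would then compare primary obstructions across the three rows. The map $\operatorname{pr}_1$ induces an isomorphism on $\pia_{d-1}$ because $\Omega(\A^{2r+d}\setminus 0)$ is at least $(2r+d-3)$-connected and $d-1 < 2r+d-2$ for $r \geq 1$ by \cite[Corollary 5.43]{Morel}. A symplectic analogue of the diagrammatic analysis of $f$ from the proof of \Cref{thm:first-obstr}, using the fiber sequence \Cref{seq:change_quotient} together with \Cref{cor:first-symp-homotopy}, then shows $f$ also induces an isomorphism on $\pia_{d-1}$. Consequently $o$ agrees, up to a unit, with the primary obstruction to lifting $Q$ along $\BSp_{d-2} \to \BSp_d$.

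By \Cref{lem:elementary}, such a lift of $Q$ exists precisely when $Q$ splits off a trivial line subbundle, and the first obstruction to the latter is by \Cref{def:euler} the Euler class $e_d(Q) \in \CHW^d(X, \det Q^{\vee}) = \CHW^d(X)$ (using that $\det Q$ is canonically trivial for a symplectic bundle). This establishes the stated equivalence $M$ is a summand of a hyperbolic rank $2r+d-2$ module if and only if $e_d(Q) = 0$. Independence of the choice of $Q$ is then automatic: the class $o$ depends only on $M$, and we have identified $e_d(Q)$ with $o$ up to a unit.

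The principal technical hurdle will be the identification of $f$: establishing that it induces an isomorphism on $\pia_{d-1}$ requires assembling the symplectic analogue of the auxiliary diagram used in the proof of \Cref{thm:first-obstr} (relating $f$ to the structure maps of the relevant symplectic Stiefel fiber sequence of \Cref{prop:symplectic_fiber_seqs}) and reading off the homotopy sheaves via \Cref{cor:first-symp-homotopy}. With this comparison in hand, the remainder of the argument is a direct adaptation of the classical proof.
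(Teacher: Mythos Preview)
Your approach is correct and closely parallels the proof of \Cref{thm:first-obstr}: you set up the lifting problem for $\Grsp{2r}{2r+d-2}\to\BSp_{2r}$, observe there is a single primary obstruction, and use the symplectic big diagram of \Cref{prop:symplectic_big_diagram} to transport it to the Euler class of $Q$. The paper, by contrast, takes a shortcut and argues only the implication $e_d(Q)=0\Rightarrow M$ is a summand of a hyperbolic module of rank $2r+d-2$: since $X$ has $\A^1$-cohomological dimension at most $d$, vanishing of $e_d(Q)$ forces $Q$ to split off a free rank-one summand; \Cref{lem:elementary} then upgrades this to a symplectic splitting $Q\simeq Q'\oplus H$; finally one cancels the hyperbolic plane from $M\oplus Q'\oplus H\simeq H^{r+d/2}$ via Moore--Postnikov theory for $\BSp_{2r+d-2}\to\BSp_{2r+d}$, whose fiber $\A^{2r+d}\setminus 0$ is too highly connected to obstruct anything in dimension $\le d$. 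The paper's route is more elementary and sidesteps the big-diagram comparison entirely, but as written it leaves the converse and the independence of $Q$ implicit; your obstruction-theoretic argument handles both at once, since the class $o$ is intrinsic to $M$.

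One small correction: the obstruction group you write as $H^d(X,\KMW_{d-1})$ should be $H^d(X,\KMW_d)=\CHW^d(X)$. The first nonvanishing homotopy sheaf of $\STsp{2r}{2r+d-2}$ is $\pia_{d-1}\cong\KMW_d$, as one sees already in the base case $\STsp{2}{d}\simeq\A^d\setminus 0$; the index shift comes from a typo in the statement of \Cref{cor:first-symp-homotopy}. This does not affect your argument, since the three fibers in the big diagram share the same $\pia_{d-1}$ and your comparison of obstructions goes through verbatim with the corrected coefficient sheaf. Note also that your citations of \Cref{cor:connectivity-sp} and \Cref{cor:first-symp-homotopy} require $j\ge 1$, i.e.\ $d\ge 4$; the case $d=2$ (where $\Grsp{2r}{2r}$ is a point and the fiber is $\Sp_{2r}$ itself) needs a separate check, analogous to \Cref{rmk:rr1-discussion}.
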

\begin{proof}
The assumption $e_d(Q)=0$ implies that $Q$ splits off a trivial rank $1$ summand, so 
\Cref{lem:elementary} applies. We have that $M \oplus Q' \oplus  H \simeq H^{2r+d}$ for $Q'$ symplectic of rank $d-2$. 
By considering Moore--Postnikov obstruction theory for the map $\BSp_{2r+d-2} \to \BSp_{2r+d}$ and the fiber sequence
\[\A^{2r+d}\setminus 0 \to \BSp_{2r+d-2} \to \BSp_{2r+d}\] and using that $R$ has dimension $d$ over $k$, we deduce that 
$M \oplus Q' \simeq H^{2r+d-2}.$ 
\end{proof}
We immediately obtain a few consequences:
\begin{corollary}
Let $X = \spec R$ be a smooth affine variety of even $\A^1$-cohomological dimension at most $d \geq 2$ over a perfect field $k$. If $\CHW^{d}(X) = 0$, then any symplectic $R$-modules of rank $2r$ is a direct summand of a hyperbolic symplectic $R$-module of rank $2r+d-2$.
\end{corollary}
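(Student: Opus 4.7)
The plan is to derive this directly from \Cref{thm:first-obstruction-even} together with the symplectic Forster--Swan statement \Cref{thm:symplectic-F-S}. First I would apply \Cref{thm:symplectic-F-S} (even $d$ case) to exhibit a complementary symplectic module $Q$ of rank $d$ with $M \oplus Q$ hyperbolic of rank $2r+d$. This is the input that \Cref{thm:first-obstruction-even} requires, so the remaining task is to verify that the Euler class obstruction $e_d(Q)$ vanishes under the hypothesis $\CHW^{d}(X) = 0$.

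The subtle point is that, as defined in \Cref{def:euler}, the Euler class lives in $\CHW^d(X, \det Q^\vee)$ rather than the untwisted group $\CHW^d(X)$, so one must show that the twist is trivial before invoking the hypothesis. This is where I would use that both $M$ and the rank $2r+d$ hyperbolic module $M \oplus Q$ are symplectic, hence have canonically trivial determinant. Thus $\det Q \cong \det(M \oplus Q) \otimes (\det M)^{-1}$ is trivial, which identifies $\CHW^d(X, \det Q^\vee)$ with $\CHW^d(X)$.

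With the twist dispensed with, $e_d(Q)$ is forced to vanish by the hypothesis $\CHW^d(X) = 0$. Applying \Cref{thm:first-obstruction-even} then immediately yields that $M$ is a direct summand of a hyperbolic module of rank $2r+d-2$, finishing the argument. The main (minor) obstacle is really just the bookkeeping about the twist; once that is handled, the statement is a direct corollary, which is consistent with its phrasing in the paper.
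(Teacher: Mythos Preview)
Your proposal is correct and matches the paper's approach: the paper states this corollary without proof, treating it as immediate from \Cref{thm:first-obstruction-even}, and you have simply made explicit the two obvious ingredients (existence of $Q$ via \Cref{thm:symplectic-F-S}, and triviality of $\det Q$ so that the Euler class lands in the untwisted group $\CHW^d(X)$). Note that the statement of \Cref{thm:first-obstruction-even} already places $e_d(Q)$ in $\CHW^d(X)$, so the twist discussion, while correct, is already absorbed into that theorem.
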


\begin{corollary}\label{d-segre-obstruction-symplectic} Let $k$ be a perfect field of $2$-cohomological dimension at most $1$. 
Let $X=\spec R$ be a smooth affine variety of dimension at most $d$ over $k$, and let $M$ be a symplectic module of rank $2r$ over $R$. 
If $d$ is even, then $M$ is a summand of a symplectic module of rank $2r+d-2$ if and only if the Segre class $s_{d}(M)$ vanishes.
\end{corollary}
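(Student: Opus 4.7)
The plan is to reduce the statement to \Cref{thm:first-obstruction-even} and then translate the vanishing of the Euler class of a symplectic complement into the vanishing of the top Segre class of $M$, using the hypothesis on $k$. Let $Q$ be a symplectic complement, so $M \oplus Q$ is a hyperbolic symplectic module of rank $2r+d$; such a $Q$ exists by \Cref{thm:symplectic-F-S}. By \Cref{thm:first-obstruction-even}, it suffices to show that $e_d(Q) = 0 \in \CHW^d(X)$ if and only if $s_d(M) = 0 \in \CH^d(X)$. Note that the Chow--Witt group here is untwisted because symplectic bundles have trivial determinant, so $\det Q$ is trivial (as $M \oplus Q$ is hyperbolic and $M$ is symplectic).

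Next I would use the hypothesis that $k$ has $2$-cohomological dimension at most $1$. By \cite[Proposition 5.2]{AF2} (invoked analogously in the proof of \Cref{cor:murthy-segre-suff-quad-closed}), the natural morphism $\KMW_d \to \KM_d$ induces an isomorphism $\CHW^d(X) \xrightarrow{\sim} \CH^d(X)$. Under this isomorphism, the Euler class $e_d(Q)$ is identified with a unit multiple of the top Chern class $c_d(Q)$ by \Cref{rmk:euler-lifts}. In particular, $e_d(Q) = 0$ in $\CHW^d(X)$ if and only if $c_d(Q) = 0$ in $\CH^d(X)$.

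Finally, the underlying vector bundle of any hyperbolic symplectic module is trivial, so $M \oplus Q$ has trivial underlying bundle. By the Whitney sum formula for total Chern classes, $c(M) \cdot c(Q) = 1$ in $\CH^{*}(X)$, whence $c(Q) = c(M)^{-1} = s(M)$ by the definition of the total Segre class. Comparing graded pieces in degree $d$ gives $c_d(Q) = s_d(M)$. Combining the previous two paragraphs yields the equivalence $e_d(Q) = 0 \iff s_d(M) = 0$, completing the proof.

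The main thing to be careful about is the independence of the choice of $Q$, which is already guaranteed by \Cref{thm:first-obstruction-even}, and the compatibility of the various twists under the comparison $\KMW_d \to \KM_d$; since all line bundles appearing are trivialized (via the symplectic structure), these issues are automatic and no further bookkeeping is required.
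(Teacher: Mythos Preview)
Your proof is correct and follows essentially the same approach as the paper: both reduce to \Cref{thm:first-obstruction-even} and then invoke \cite[Proposition~5.2]{AF2} together with \Cref{rmk:euler-lifts} to identify the vanishing of $e_d(Q)$ with the vanishing of $s_d(M)$. Your write-up is simply more explicit, spelling out the Whitney sum argument $c(Q)=s(M)$ (using that the underlying bundle of a hyperbolic symplectic module is trivial) and the triviality of the determinant twist, both of which the paper leaves implicit.
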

\begin{proof}Consider the set-up as in \Cref{thm:first-obstruction-even}. Note that, by \Cref{rmk:euler-lifts} and \cite[Proposition 5.2]{AF2}, 
$s_d(M)=0$ if and only if $e_d(Q)=0$.  \end{proof}

We next consider symplectic modules over odd-dimensional varieties.
\begin{theorem}\label{thm:symplectic-odd-further-reduction}
Let $X = \spec R$ be a smooth affine variety of odd $\A^1$-cohomological dimension at most $d\geq 3$ over a perfect field $k$. 
Let $M$ be a rank $2r\geq 4$ symplectic module and let $Q$ be a symplectic module of rank $d-1$ such that $Q\oplus M$ is hyperbolic. 
If $H^{d}(X, \pia_{d-1}(\mathbb{A}^{d-1}\setminus 0)) = 0$ and $e_{d-1}(Q)=0$ in $\CHW^{d-1}(X)$, then $M$ can be generated by $2r+d-3$ elements.
\end{theorem}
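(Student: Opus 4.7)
The plan is to mimic the strategy of \Cref{thm:first-obstruction-even} adapted to odd dimensions, using the symplectic big diagram \Cref{prop:symplectic_big_diagram} in the spirit of the comparison argument of \Cref{thm:rd2-alg-closed-char-neq-2} from the classical setting. The hypothesis that $M \oplus Q$ is hyperbolic of rank $2r+d-1$ produces a lift $\tilde M \colon X \to \Grsp{2r}{2r+d-1}$ classifying this presentation, and the conclusion (interpreted as asserting that $M$ is a direct summand of a hyperbolic module of rank $2r+d-3$) is equivalent to lifting $\tilde M$ further along the map $\Grsp{2r}{2r+d-3} \to \Grsp{2r}{2r+d-1}$.

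Concretely, I would apply \Cref{prop:symplectic_big_diagram} with $2j = 2r$ and $2n = 2r+d-3$, and abbreviate the resulting middle-row fiber as $F := (\A^{d-1}\setminus 0) \times \Omega(\A^{2r+d-1}\setminus 0)$, yielding the fiber sequence
\[F \longrightarrow \Grsp{2r}{2r+d-3} \longrightarrow \Grsp{2r}{2r+d-1}\]
together with a map to the bottom-row sequence $\A^{d-1}\setminus 0 \to \BSp_{d-3} \to \BSp_{d-1}$ via first-factor projection $p_1$ on fibers. Running Moore--Postnikov obstruction theory, the first obstruction to lifting $\tilde M$ along the middle row lives in $H^{d-1}(X, \pia_{d-2} F)$. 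Since $\Omega(\A^{2r+d-1}\setminus 0)$ is $(2r+d-3)$-connected, its $\pia_{d-2}$ vanishes for $r \geq 1$, so $p_1$ induces an isomorphism $\pia_{d-2} F \cong \pia_{d-2}(\A^{d-1}\setminus 0) \cong \KMW_{d-1}$, identifying the obstruction group with $\CHW^{d-1}(X)$. The commutativity of the big diagram then identifies this first obstruction with the first obstruction to lifting $Q \colon X \to \BSp_{d-1}$ along $\BSp_{d-3} \to \BSp_{d-1}$, which is precisely the top Euler class $e_{d-1}(Q)$, vanishing by hypothesis. The second obstruction lies in $H^d(X, \pia_{d-1} F)$; for $r \geq 2$ an analogous connectivity computation shows the $\Omega$-summand does not contribute, so this group equals $H^d(X, \pia_{d-1}(\A^{d-1}\setminus 0))$, which vanishes by hypothesis. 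All obstructions at stages $i \geq d$ live in cohomological degrees exceeding $d$ and vanish by the $\A^1$-cohomological dimension bound.

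The main obstacle I anticipate is carefully justifying the identification of the first Moore--Postnikov obstruction of the middle row with the classical Euler class $e_{d-1}(Q)$. This requires unpacking the commutativity of \Cref{prop:symplectic_big_diagram} at the level of classifying maps: specifically, one must verify that the right-hand vertical column sends $\tilde M$ to the classifying map of $Q$ (reflecting that $Q$ is the complementary symplectic summand in the presentation $M \oplus Q$), after which functoriality of Moore--Postnikov obstruction theory with respect to maps of fiber sequences yields the claimed identification; this parallels the analogous argument for classical Stiefel varieties in \Cref{thm:first-obstr}. A secondary technicality is the boundary case $r = 1$, where the $\Omega(\A^{d+1}\setminus 0)$-summand can contribute a nontrivial $\KMW_{d+1}$-class in degree $d-1$; the sketch above works cleanly for $r \geq 2$, and the $r=1$ case would require either an additional cohomological vanishing hypothesis or a separate technical argument.
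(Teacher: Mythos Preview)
Your approach is correct in outline for $r \geq 2$, but the paper takes a quite different and simpler route that avoids both obstacles you flag.

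Rather than lifting $M$ along $\Grsp{2r}{2r+d-3} \to \Grsp{2r}{2r+d-1}$ via the symplectic big diagram, the paper works directly with the complementary module $Q$ as an \emph{ordinary} projective module. The first obstruction to splitting a free rank-one summand from $Q$ along $\BGL_{d-2} \to \BGL_{d-1}$ is \emph{by definition} the Euler class $e_{d-1}(Q)$ (so no identification argument is needed), and the second obstruction lies exactly in $H^d(X,\pia_{d-1}(\A^{d-1}\setminus 0))$, which vanishes by hypothesis; since $X$ has $\A^1$-cohomological dimension at most $d$, these are all the obstructions. Hence $Q$ splits off a free summand, and then \Cref{lem:elementary} upgrades this to a hyperbolic splitting $Q \cong Q' \oplus H$ of symplectic modules. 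Now $M \oplus Q' \oplus H$ is hyperbolic of rank $2r+d-1$, and a one-step symplectic cancellation along $\BSp_{2r+d-3} \to \BSp_{2r+d-1}$ (whose fiber $\A^{2r+d-1}\setminus 0$ is $(2r+d-3)$-connected, so all obstructions land in degree $\geq 2r+d-1 > d$) yields that $M \oplus Q'$ is hyperbolic of rank $2r+d-3$.

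This buys two things over your strategy. First, since the argument is about $Q$ alone, nothing depends on $r$, so the $r=1$ boundary case disappears entirely; in your setup the extra $\KMW_{d+1}$-summand in $\pia_{d-1}F$ when $r=1$ is a real issue that the paper never encounters. Second, your ``main obstacle'' of matching the first Moore--Postnikov obstruction for $\BSp_{d-3} \to \BSp_{d-1}$ with the Euler class $e_{d-1}(Q)$ (defined via $\BGL_{d-2} \to \BGL_{d-1}$) is completely bypassed. Your approach can be made to work for $r \geq 2$ by comparing the symplectic and linear fiber sequences via the equivalence $\Sp_{d-1}/\Sp_{d-3} \simeq \GL_{d-1}/\GL_{d-2}$, but the paper's route through \Cref{lem:elementary} is shorter and uniform in $r$.
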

\begin{proof}
The first obstruction to splitting a copy of $R$ from $Q$ is the Euler class $e_{d-1}(Q)$. 
The second obstruction lies in $H^d(X,\pia_{d-1}(\A^{d-1}\setminus 0))=0$. By \Cref{lem:elementary}, we conclude that 
$M\oplus Q'\oplus H $ is hyperbolic of rank $2r+d-1$. Again, by considering Moore--Postnikov obstruction theory for the fiber sequence
\[ \A^{2r+d-1}\setminus 0 \to \BSp_{2r+d-3} \to \BSp_{2r+d-1},\]
we find that $M \oplus Q'$ is hyperbolic of rank $2r+d-3$.
\end{proof}
\begin{remark} Without the hypothesis that $H^{d}(X,\pia_{d}(\A^{d}\setminus 0))=0$, the proof of \Cref{thm:symplectic-odd-further-reduction} 
shows that $e_{d-1}(Q)$ is the first obstruction to $M$ being a summand of a rank $2r+d-3$ hyperbolic module. \end{remark}
Under additional hypotheses on the field, the previous result simplifies.
\begin{corollary}\label{cor:symplectic-even-further-reduction}
Let $X = \spec R$ be a smooth affine variety of odd dimension at most $d \geq 3$ over a perfect field $k$. 
Let $M$ be a rank $2r\geq 4$ symplectic $R$-module. \begin{itemize} 
\item If $k$ is quadratically closed and $H^d(X,\pia_{d-1}(\A^{d-1}\setminus 0))=0$, 
$M$ is a direct summand of a hyperbolic module of rank $2r+d-3$ elements if and only if $s_{d-1}(M)=0$.

\item If $k$ is an algebraically closed field of characteristic zero, $M$ is a direct summand of a hyperbolic module of rank $2r+d-3$ elements if and only if $s_{d-1}(M)=0$.
\end{itemize}
\end{corollary}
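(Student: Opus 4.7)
The plan is to reduce both bullets of the corollary to \Cref{thm:symplectic-odd-further-reduction} by translating the Euler-class hypothesis $e_{d-1}(Q)=0$ into the Segre-class condition $s_{d-1}(M)=0$. First I would recall from \Cref{rmk:euler-lifts} that the natural comparison map sends the Chow--Witt Euler class to a nonzero multiple of the top Chern class. Since $Q$ carries a symplectic form, its determinant is canonically trivial, so the twist is inessential. Using that $k$ is quadratically closed (hence has $2$-cohomological dimension at most $1$), \cite[Proposition 5.2]{AF2} says the comparison $\CHW^{d-1}(X)\to \CH^{d-1}(X)$ is an isomorphism. If $Q$ is any rank $d-1$ symplectic complement of $M$ in a hyperbolic module of rank $2r+d-1$ (which exists by \Cref{thm:symplectic-F-S}), then the hyperbolic target is free as an $R$-module, so its total Chern class is trivial and $c(Q)=s(M)$ by definition of the Segre class. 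In particular $c_{d-1}(Q)=s_{d-1}(M)$ in $\CH^{d-1}(X)$, and under the Chow--Witt/Chow identification this matches $e_{d-1}(Q)$.

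For the first bullet, sufficiency is then immediate: assuming $s_{d-1}(M)=0$ gives $e_{d-1}(Q)=0$ for any such $Q$, and the hypothesis $H^d(X,\pia_{d-1}(\A^{d-1}\setminus 0))=0$ is exactly what is needed to invoke \Cref{thm:symplectic-odd-further-reduction}. For necessity, suppose $M$ is a direct summand of a hyperbolic module of rank $2r+d-3$, and write $M\oplus N \simeq H^{(2r+d-3)/2}$ with $N$ symplectic of rank $d-3$ (here $d$ odd makes $d-3$ even, so this makes sense). Then $Q:=N\oplus H$ is a rank $d-1$ symplectic complement of $M$ in a hyperbolic module of rank $2r+d-1$, and $c_{d-1}(Q)=c_{d-1}(N)=0$ since $N$ has rank $d-3<d-1$ so its Chern classes vanish above degree $d-3$. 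Via the identification above, $s_{d-1}(M)=0$.

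For the second bullet, algebraic closure implies quadratic closure, so the Euler/Segre identification carries over verbatim. It remains only to verify the secondary-obstruction vanishing $H^d(X,\pia_{d-1}(\A^{d-1}\setminus 0))=0$, which is the input of characteristic-zero algebraic closedness used in the proof of \cite[Theorem 7.1.1]{ABH} and recalled in the remark following \Cref{thm:rd2-alg-closed-char-neq-2}. Citing this reduces the second bullet to the first.

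The main obstacle is conceptual rather than technical: one must keep careful track of the chain of identifications (Chow--Witt with Chow, Euler with top Chern, Chern with Segre via inversion), verify that the symplectic hypothesis trivializes the determinant twist, and confirm that the choice-dependence of $Q$ in \Cref{thm:symplectic-odd-further-reduction} is rendered harmless by the intrinsic nature of the Segre class. Once these are in place, the corollary is essentially a restatement of \Cref{thm:symplectic-odd-further-reduction} in the presence of the additional field hypotheses.
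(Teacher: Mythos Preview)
Your proposal is correct and follows essentially the same route as the paper: reduce to \Cref{thm:symplectic-odd-further-reduction} by identifying $e_{d-1}(Q)$ with $s_{d-1}(M)$ via the Chow--Witt/Chow comparison for quadratically closed $k$, and for the second bullet invoke the vanishing $H^d(X,\pia_{d-1}(\A^{d-1}\setminus 0))=0$ from the proof of \cite[Theorem 7.1.1]{ABH}. The paper cites \cite[Corollary 5.3]{AF2} directly (rather than deducing the quadratically closed case from \cite[Proposition 5.2]{AF2} via a $2$-cohomological-dimension argument), and it leaves implicit the necessity direction and the Euler/Chern/Segre bookkeeping that you spell out, but the strategy is the same.
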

\begin{proof}
 By \cite[Corollary 5.3]{AF2}, $\CHW^{d-1}(X) \cong \CH^{d-1}(X)$ for $k$ quadratically closed so the result follows. 
If $k$ is additionally algebraically closed of characteristic zero,  the proof of \cite[Theorem 7.1.1]{ABH} shows that $H^{d}(X,\pia_{d-1}(A^{d-1}\setminus 0)) = 0$.
\end{proof}
\bibliographystyle{abbrv}
\bibliography{arxiv2}
\end{document}